\newcommand\subsetsim{\mathrel{\substack{
			\textstyle\subset\\[-0.2ex]\textstyle\sim}}}
\theoremstyle{remark}
\newtheorem{remark}{Remark}
\theoremstyle{definition}
\newtheorem{definition}{Definition}[section]
\theoremstyle{plain}
\newtheorem{proposition}[definition]{Proposition}
\newtheorem{theorem}[definition]{Theorem}
\newtheorem{corollary}[definition]{Corollary}
\newtheorem{lemma}[definition]{Lemma}
\begin{document}
	\renewcommand{\proofname}{Proof}
	\renewcommand{\contentsname}{Table of contents}

	\title{Stability estimates for an inverse Steklov problem in a class of hollow spheres}
	\author{Germain Gendron \\[12pt]
		\small  Laboratoire de Math\'ematiques Jean Leray, UMR CNRS 6629, \\ \small 2 Rue de la Houssini\`ere BP 92208, F-44322 Nantes Cedex 03. \\
		\small Email: germain.gendron@univ-nantes.fr }


	
	\date{\today}

	
	\maketitle


	\begin{abstract}
		
		In this paper, we study an inverse Steklov problem in a class of $n$-dimensional manifolds having the topology of a hollow sphere and equipped with a warped product metric. Precisely, we aim at studying the continuous dependence of the warping function defining the warped product with respect to the Steklov spectrum. We first show that the knowledge of the Steklov spectrum up to an exponential decreasing error is enough to determine uniquely the warping function in a neighbourhood of the boundary. Second, when the warping functions are symmetric with respect to 1/2, we prove a log-type stability estimate in the inverse Steklov problem. As a last result, we prove a log-type stability estimate for the corresponding Calder\'on problem.

		
		\vspace{0.5cm}
		
		\noindent \textit{Keywords}. Inverse Calder\'on problem, Steklov spectrum, Weyl-Titchmarsh functions, Nevanlinna theorem, Müntz-Jackson's theorem.

	\end{abstract}

	\newpage

	\newpage

\tableofcontents


\newpage

\section{Introduction}

\subsection{Framework}

\noindent For $n\ge 2$, let us consider a class of $n$-dimensional manifolds $M=[0,1]\times\mathbb{S}^{n-1}$ equipped with a warped product metric
\begin{equation*}
g=f(x)(dx^2+g_{\mathbb{S}})
\end{equation*} 

\medskip

\noindent where $g_\mathbb{S}$ denotes the usual metric on $\mathbb{S}^{n-1}$ induced by the euclidean metric on $\mathbb{R}^n$ and $f$ is a smooth and positive function on $[0,1]$. Let $\displaystyle\psi$ belong to $ H^{1/2}(\partial M)$ and $\omega$ be in $\mathbb{R}$. 

$\quad$

\noindent The Dirichlet problem is the following elliptic equation with boundary condition
\begin{equation}
\label{Schr_2}
\left\{
\begin{aligned}
& -\Delta_g u=\omega u \:\:{\rm{in}}\:\:M\\
& u=\psi\:\:{\rm{on}}\:\:\partial M, \end{aligned}\right.
\end{equation}
where, in a local coordinate system $(x_i)_{i=1,...,n}$, and setting $|g|=\det(g_{ij})$ and $(g^{ij})=(g_{ij})^{-1}$, the Laplacian operator $-\Delta_g$ has the expression 

\begin{equation*}
-\Delta_g=-\sum_{1\le i,j\le n}\frac{1}{\sqrt{|g|}}\partial_i\big(\sqrt{|g|}g^{ij}\partial_j\big).
\end{equation*} 

$\quad$

\noindent If $\omega$ does not belong to the Dirichlet spectrum of $-\Delta_g$, the equation (\ref{Schr_2}) has a unique solution in $H^1(M)$ (see \cite{salo2013calderon,taylor2010partial}), so we can define the so-called \textit{Dirichlet-to-Neumann} operator ("DN map") $\Lambda_g(\omega)$ as :
\begin{equation*}
\begin{array}{ccccc}
\Lambda_g(\omega) & : & H^{1/2}(\partial M) & \to & H^{-1/2}(\partial M) \\
& & \displaystyle\psi & \mapsto &  \displaystyle\frac{\partial u}{\partial\nu} \big|_{\partial M}\\
\end{array}
\end{equation*}

\noindent where $\nu$ is the unit outer normal vector on $\partial M$. The previous definition has to be understood in the weak sense by: 
\begin{equation}
\label{weak_sense_2}
\forall (\psi,\phi)\in H^{1/2}(\partial M)^2\: :\: \langle\Lambda_g(\omega)\psi,\phi\rangle=\int_{M}\langle du,dv\rangle_g,\ \mathrm{dVol}_g+\omega\int_{M}uv\,\mathrm{dVol}_g.
\end{equation}

\medskip

\noindent where $v$ is any element of $H^1(M)$ such that $v|_{\partial M}=\phi$, $\langle.,.\rangle$ is the standard $L^2$ duality pairing between $H^{1/2}(\partial M)$ and its dual, and $\mathrm{dVol}_g$ is the volume form induced by $g$ on $M$.

$\quad$ 

\noindent The DN map $\Lambda_g(\omega)$ is a self-adjoint pseudodifferential operator of order one on $L^2(\partial M)$. Then, it has a real and discrete spectrum accumulating at infinity. We shall denote the Steklov eigenvalues counted with multiplicity by
\begin{equation*}
\sigma(\Lambda_g(\omega))=\{0=\lambda_0<\lambda_1\le\lambda_2\le...\le\lambda_k\to+\infty\},
\end{equation*} 

\medskip

\noindent usually called the \textit{Steklov spectrum} (see \cite{jollivet2014inverse}, p.2 or \cite{girouard2017spectral} for details). 

$\quad$

\noindent The inverse Steklov problem adresses the question whether the knowledge of the Steklov spectrum is enough to recover the metric $g$. Precisely:
\begin{center}
	\textit{If $\:\displaystyle \sigma\big(\Lambda_g(\omega)\big)=\sigma\big(\Lambda_{\tilde{g}}(\omega)\big)$, is it true that $ g=\tilde{g}$ ?}
\end{center}

\medskip

\noindent It is known that the answer is negative because of some gauge invariances in the Steklov problem. These gauge invariances are (see \cite{jollivet2014inverse}):

\medskip

\noindent 1) Invariance under pullback of the metric by the diffeomorphisms of $M$ :
\begin{equation*}
\forall \psi \in\mathrm{Diff}(M),\quad  \Lambda_{\psi^*g}(\lambda)=\varphi^*\circ \Lambda_g(\lambda)\circ \varphi^{*-1}.
\end{equation*}
where $\varphi:=\psi|_{\partial M}$ and where  $\varphi^* : C^\infty(\partial M)\to C^\infty(\partial M)$ is the application defined by $\varphi^*h:=h\circ \varphi$.
\medskip

\noindent 2) In dimension $n=2$ and for $\omega=0$, there is one additional gauge invariance. Indeed, thanks to the conformal invariance of the Laplacian, for every smooth function $c>0$, we have
\begin{equation*}
\Delta_{cg}=\frac{1}{c}\Delta_g.
\end{equation*}

\noindent Consequently, the solutions of the Dirichlet problem (\ref{Schr_2}) associated to the metrics $g$ and $cg$ are the same when $\omega=0$. Moreover, if we assume that $c\equiv 1$ on the boundary, the unit outer normal vectors on $\partial M$ are also the same for both metrics. Therefore,
\begin{equation*}
\Lambda_{cg}(0)=\Lambda_{g}(0).
\end{equation*}


\noindent In our particular model, we have shown in \cite{gendron2019uniqueness} that the only gauge invariance is given by the involution $\eta:x\mapsto 1-x$ when $n=2$ and also when $n\ge 3$ under some technical estimates on the conformal factor $f$ on the boundary. Precisely, we proved:

\begin{theorem}\label{theorem1}
	Let $M=[0,1]\times \mathbb{S}^{n-1}$ be a smooth Riemannian manifold equipped with the metrics
	\begin{center}
		$ g=f(x)(dx^2+g_\mathbb{S}),$ \ \  $\tilde{g}=\tilde{f}(x)(dx^2+g_\mathbb{S})$,
	\end{center}
	and let $\omega$ be a frequency not belonging to the Dirichlet spectrum of  $-\Delta_g$ and $-\Delta_{\tilde{g}}$ on $M$. Then,
	\begin{enumerate}
		\item For $n=2$ and $\omega\ne 0$,
		\begin{equation*}
		\big(\sigma(\Lambda_g(\omega)) = \sigma(\Lambda_{\tilde{g}}(\omega))\big)	\Leftrightarrow \big(f=\tilde{f}\quad {\rm{or}}\quad f=\tilde{f}\circ \eta\big)
		\end{equation*}
		where $\eta(x)=1-x$ for all $x \in [0,1]$.
		
		\item For $n\ge 3$, and if moreover
		\begin{equation*}
		f,\tilde{f}\in\mathcal{C}_b := \bigg\{f\in C^\infty([0,1]),\: \bigg|\frac{f'(k)}{f(k)}\bigg|\le \frac{1}{n-2},\:k=0 \ {\rm{and}} \  1 \bigg\},
		\end{equation*}			
		\begin{equation*}
		\big(\sigma(\Lambda_g(\omega)) = \sigma(\Lambda_{\tilde{g}}(\omega))\big)	\Leftrightarrow \big(f=\tilde{f}\quad {\rm{or}}\quad f=\tilde{f}\circ \eta\big)
		\end{equation*}
		\noindent
	\end{enumerate}
\end{theorem}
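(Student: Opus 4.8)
\emph{Approach.} The implication ($\Leftarrow$) is immediate: if $f=\tilde f$ the metrics coincide, while if $f=\tilde f\circ\eta$ the involution $\eta$ extends to a diffeomorphism of $M$ with $\eta^*\tilde g=g$, so by the diffeomorphism gauge invariance $\Lambda_g(\omega)$ and $\Lambda_{\tilde g}(\omega)$ are unitarily conjugate, hence isospectral. I focus on ($\Rightarrow$), and the plan is to reduce it to a family of one-dimensional inverse problems by separation of variables. Expanding boundary data and solutions in the spherical harmonics $Y$ on $\mathbb{S}^{n-1}$, with $-\Delta_{\mathbb{S}}Y=\mu Y$ for $\mu=\mu_\ell:=\ell(\ell+n-2)$, $\ell\ge 0$, of multiplicity $d_\ell$, and writing $u=\sum_\ell v_\ell(x)Y_\ell$, equation (\ref{Schr_2}) decouples; the Liouville substitution $w_\ell=f^{(n-2)/4}v_\ell$ then puts the $\ell$-th radial equation in the Schrödinger form
\[
-w_\ell''+\big(q_f(x)+\mu_\ell-\omega f(x)\big)w_\ell=0 \quad\text{on }[0,1], \qquad q_f=\tfrac{(n-2)^2}{16}\Big(\tfrac{f'}{f}\Big)^{2}+\tfrac{n-2}{4}\Big(\tfrac{f'}{f}\Big)',
\]
so $q_f\equiv 0$ when $n=2$. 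Since $\partial M=(\{0\}\cup\{1\})\times\mathbb{S}^{n-1}$ has two components, $\Lambda_g(\omega)$ is block-diagonal in this decomposition, acting on the $\ell$-th eigenspace as a $2\times 2$ symmetric matrix $\Lambda_g^{\ell}$ whose entries are explicit rational expressions in the fundamental solutions of $-\partial_x^2+q_f-\omega f$ at the spectral value $-\mu_\ell$. Thus $\sigma(\Lambda_g(\omega))$ is the union over $\ell\ge 0$ of the two eigenvalues of $\Lambda_g^{\ell}$, each counted $d_\ell$ times.

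\emph{Matching the angular sectors.} Next I would use that the two eigenvalues of $\Lambda_g^{\ell}$ have two-term asymptotics, as $\ell\to\infty$, of the form $\sqrt{\mu_\ell/f(0)}+O(1)$ and $\sqrt{\mu_\ell/f(1)}+O(1)$. Together with the known growth of $d_\ell$, the hypothesis $\sigma(\Lambda_g(\omega))=\sigma(\Lambda_{\tilde g}(\omega))$ forces $\{f(0),f(1)\}=\{\tilde f(0),\tilde f(1)\}$ and then, by matching eigenvalues sector-by-sector for all large $\ell$ and bootstrapping down to the finitely many low modes, forces $\Lambda_g^{\ell}$ and $\Lambda_{\tilde g}^{\ell}$ to have the same characteristic polynomial for every $\ell\ge 0$, up to the interchange $x\leftrightarrow 1-x$ of the two endpoints (this is exactly where the $\eta$-gauge enters). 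The Wronskian identities constraining the entries of these matrices then let me conclude that the characteristic function $\mu\mapsto s_f(1,-\mu)$ and the relevant Weyl–Titchmarsh combination of $-\partial_x^2+q_f-\omega f$ coincide with those of $-\partial_x^2+q_{\tilde f}-\omega\tilde f$ (or of $-\partial_x^2+q_{\tilde f\circ\eta}-\omega\,\tilde f\circ\eta$) at every point $\mu=\mu_\ell$.

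\emph{From a discrete set to an identity, and Borg–Marchenko.} These functions extend to the complex plane as entire functions of $\mu$ of order $1/2$ and finite exponential type (of type $\le 1$ in the variable $\sqrt\mu$, as for $\sinh\sqrt\mu/\sqrt\mu$). Since $\sqrt{\mu_\ell}=\ell+\tfrac{n-2}{2}+O(1/\ell)$ is asymptotically an arithmetic progression of step $1$, a Nevanlinna/Carlson-type uniqueness theorem upgrades the equalities at $\{\mu_\ell\}$ to an identity in $\mu$. In particular the Weyl–Titchmarsh function at $x=0$ of $-\partial_x^2+q_f-\omega f$ equals (after the possible endpoint swap) that of $-\partial_x^2+q_{\tilde f}-\omega\tilde f$, so the Borg–Marchenko theorem yields $q_f-\omega f=q_{\tilde f}-\omega\tilde f$ on $[0,1]$, or the same with $\tilde f$ replaced by $\tilde f\circ\eta$. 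For $n=2$ the potentials vanish identically and $\omega\ne 0$, so this reads at once $f=\tilde f$ or $f=\tilde f\circ\eta$. For $n\ge 3$ one must still invert the nonlinear map $f\mapsto q_f-\omega f$: with $h:=\log(f/\tilde f)$, the identity $q_f-\omega f=q_{\tilde f}-\omega\tilde f$ becomes a second-order ODE for $h$, and the assumption $f,\tilde f\in\mathcal{C}_b$ — a bound on $|f'/f|$ and $|\tilde f'/\tilde f|$ at $x=0,1$ — is precisely what is needed to force $h\equiv 0$ near the endpoints, hence on $[0,1]$, giving $f=\tilde f$ (respectively $f=\tilde f\circ\eta$).

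\emph{Main obstacle.} The two delicate points are, first, the passage in the second step from an equality of unordered spectra to a sector-by-sector equality of the $2\times 2$ blocks: this requires sharp two-term Steklov asymptotics and a careful combinatorial argument treating correctly the finitely many low angular modes, where the clean asymptotic separation of the two eigenvalues need not hold. Second, the complex-analytic step hinges on pinning down the precise order and type of the entire functions carrying the Weyl–Titchmarsh data so that the uniqueness theorem applies on the sparse sequence $\{\mu_\ell\}$. By comparison, the endpoint ODE inversion needed for $n\ge 3$ is comparatively routine once $\mathcal{C}_b$ is in force, and the $n=2$ case is essentially immediate after separation of variables.
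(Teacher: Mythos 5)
Your overall strategy (separation of variables, the $2\times 2$ block structure of the DN map written through the Weyl--Titchmarsh functions and the characteristic function, two-term eigenvalue asymptotics, a Nevanlinna/Carlson-type continuation from the discrete set $\{\mu_\ell\}$, Borg--Marchenko, then an ODE inversion) is indeed the route of the paper; note that Theorem \ref{theorem1} is quoted here from \cite{gendron2019uniqueness}, but Sections 2--3 run exactly this machinery. The gap is in how you distribute the work. The step you call ``matching the angular sectors'' is asserted rather than proved, and it is precisely there that the hypothesis $f,\tilde f\in\mathcal{C}_b$ is needed for $n\ge 3$. By Lemma \ref{asymp_vp_2} and (\ref{canard_2}), $\lambda^{\pm}(\kappa_m)$ is $m/\sqrt{f(\cdot)}$ plus a constant involving $\pm(\ln f^{n-2})'(\cdot)/(4\sqrt{f(\cdot)})$; equality of the spectra only says that $\lambda^{\pm}(\kappa_m)$ coincides with some $\tilde\lambda^{\pm}(\kappa_p)$, and a priori $p=m+k$ with a nonzero integer shift $k$. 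What kills this shift is the bound $\frac{n-2}{4}\,\bigl|\frac{f'(1)}{f(1)}-\frac{\tilde f'(1)}{\tilde f(1)}\bigr|\le\frac12$ furnished by $\mathcal{C}_b$ (this is exactly how it enters in Lemma \ref{un_sur_deux_local_2}); without it one ends up comparing $M(\mu_m)$ with $\tilde M(\mu_{m+k})$ and the identification of the Weyl--Titchmarsh data, hence your whole third step, collapses. You instead postpone $\mathcal{C}_b$ to the very end, where it cannot do this job.

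Moreover, your final step does not work as stated. For $n\ge3$ the identity $q_f-\omega f=q_{\tilde f}-\omega\tilde f$ on $[0,1]$ does not determine $f$: already for $\omega=0$, $F=f^{(n-2)/4}$ and any other positive solution of $-F''+qF=0$ produce the same potential, a two-parameter ambiguity, and the pointwise inequalities $|f'/f|\le\frac1{n-2}$, $|\tilde f'/\tilde f|\le\frac1{n-2}$ at $x=0,1$ are open conditions that cannot force $\log(f/\tilde f)\equiv 0$ near the endpoints. What the argument actually requires is equality of Cauchy data at one endpoint: $f(1)=\tilde f(1)$ (from the leading term, which you do obtain) \emph{and} $f'(1)=\tilde f'(1)$, the latter coming from the identification of the constant terms $(\ln f^{n-2})'(1)/(4\sqrt{f(1)})$ --- again a by-product of the correct, shift-free sector matching --- after which Cauchy--Lipschitz applied to the second-order ODE satisfied by $f$ and $\tilde f$ yields $f=\tilde f$ (or $f=\tilde f\circ\eta$ in the swapped case). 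So the derivative matching you never extract is indispensable, and the role you assign to $\mathcal{C}_b$ is misplaced; the $n=2$, $\omega\ne0$ case and the ($\Leftarrow$) direction are fine.
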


\medskip

\begin{remark}
 In Theorem \ref{theorem1}, there is no need to assume that $\omega \ne 0$ when $n\ge 3$ whereas this condition is necessary in dimension $2$, due to the gauge invariance by a conformal factor.
\end{remark}

\medskip

\noindent In this paper, we will show two additional results on the Steklov inverse problem, that follow and precise the question of uniqueness. Namely, we will prove some \emph{local uniqueness} and \emph{stability} results. Before stating our results, recall that the boundary $\partial M$ of $M$ has two connected components. If we denote $-\Delta_{g_\mathbb{S}}$ the Laplace-Beltrami operator on $(\mathbb{S}^{n-1},g_\mathbb{S})$ and
\begin{equation*}
\sigma(-\Delta_{g_\mathbb{S}}):=\{0=\mu_0<\mu_1\le\mu_2\le...\le \mu_m\le...\to+\infty\}
\end{equation*}
\noindent the sequence of the eigenvalues of $-\Delta_{g_\mathbb{S}}$, counted with multiplicity, one can show that the spectrum of $\Lambda_g(\omega)$ is made of two sets of eigenvalues $\{\lambda^-(\mu_m)\}$ \ and \ $\{\lambda^+(\mu_m)\}$ whose asymptotics are given later in Lemma \ref{asymp_vp_2}.

\subsection{Closeness of two Steklov spectra}

\noindent Let us define first what is the \textit{closeness} between two spectra $\sigma\big(\Lambda_g(\omega)\big)$ and $\sigma\big(\Lambda_{\tilde{g}}(\omega)\big)$.

\begin{definition}
	\label{proximite_2}
	\noindent Let $(\varepsilon_m)_m$ a sequence of positive numbers. We say that $\sigma\big(\Lambda_g(\omega)\big)$ is close to $\sigma\big(\Lambda_{\tilde{g}}(\omega)\big)$ up to the sequence $(\varepsilon_m)_m$ if, for every $\lambda^{\pm}(\mu_m)\in\sigma\big(\Lambda_g(\omega)\big)$:
	\begin{enumerate}
	\item[$\bullet$] there is $\tilde{\lambda}^\pm$ in $\sigma\big(\Lambda_{\tilde{g}}(\omega)\big)$ such that  $|\lambda^{\pm}(\mu_m)-\tilde{\lambda}^\pm|\le \varepsilon_m$.
	\item[$\bullet$] $\mathrm{Card}\big\{\lambda^\pm\in \sigma\big(\Lambda_{g}(\omega)\big),\:\: |\lambda^\pm(\mu_m)-\lambda^\pm|\le \varepsilon_m\big\}=\mathrm{Card}\big\{\tilde{\lambda}^\pm\in \sigma\big(\Lambda_{\tilde{g}}(\omega)\big),\:\: |\lambda^\pm(\mu_m)-\tilde{\lambda}^\pm|\le \varepsilon_m\big\}$.
\end{enumerate}

	\medskip
	
	\noindent where $\mathrm{Card} \:A$ is the cardinal of the set $A$. We denote it
	\begin{center}
	$\sigma\big(\Lambda_{g}(\omega)\big)\underset{(\varepsilon_m)}{\subsetsim}\sigma\big(\Lambda_{\tilde{g}}(\omega)\big)$.
	\end{center}
\end{definition}

\begin{remark}
	The second point of Definition \ref{proximite_2} amounts to taking into account the multiplicity of the eigenvalues.
\end{remark}

\medskip

\begin{definition}
	\label{expcloseness_2}
	We say that $\sigma\big(\Lambda_g(\omega)\big)$ and $\sigma\big(\Lambda_{\tilde{g}}(\omega)\big)$ are close up to $(\varepsilon_m)_m$ if
	\begin{center}
	$\sigma\big(\Lambda_{g}(\omega)\big)\underset{(\varepsilon_m)}{\subsetsim}\sigma\big(\Lambda_{\tilde{g}}(\omega)\big)\quad$ and $\quad\sigma\big(\Lambda_{\tilde{g}}(\omega)\big)\underset{(\varepsilon_m)}{\subsetsim}\sigma\big(\Lambda_{g}(\omega)\big)$.
	\end{center} 
	
	\noindent We denote it $\sigma\big(\Lambda_g(\omega)\big)\underset{(\varepsilon_m)}{\asymp}\sigma\big(\Lambda_{\tilde{g}}(\omega)\big)$. 
	
	\medskip
	
	\noindent {\bf Constant sequence} : if $(\varepsilon_m)$ is a constant sequence such that, for all $m$, $\varepsilon_m=\varepsilon$, we just denote
	\begin{equation*}
	\sigma\big(\Lambda_g(\omega)\big)\underset{\varepsilon}{\asymp}\sigma\big(\Lambda_{\tilde{g}}(\omega)\big).  
	\end{equation*} 
		\end{definition}

	\begin{definition}
	\noindent If $A$ and $\tilde{A}$ are any finite subset of $\mathbb{R}$, we will denote $A\underset{\varepsilon}{\subsetsim} \tilde{A}$ if, for every $a\in A$
	\begin{enumerate}
		\label{closeness_2}
		\item[$\bullet$] There is $\tilde{a}\in \tilde{A}$ such that $|a-\tilde{a}|\le \varepsilon$,
		\item[$\bullet$] $\mathrm{Card}\big\{\lambda\in A,\:\: |\lambda-a|\le \varepsilon\big\}=\mathrm{Card}\big\{\tilde{\lambda}\in \tilde{A},\:\: |a-\tilde{\lambda}|\le \varepsilon\big\}$.
	\end{enumerate}
	\end{definition}

\medskip

\noindent We denote $A\underset{\varepsilon}{\asymp}\tilde{A}\:$ if $\: A\underset{\varepsilon}{\subsetsim} \tilde{A}$ and $\tilde{A}\underset{\varepsilon}{\subsetsim} A$.

$\quad$

\medskip

\noindent This work is based on ideas developped by Daudé, Kamran and Nicoleau in \cite{daude2019stability}. However, due to the specific structure of our model that possesses a \emph{disconnected} boundary (contrary to the model studied in \cite{daude2019stability}), some new difficulties arise.

$\quad$

\noindent {\bf Local uniqueness.} We would like to answer the following question : if the data of the Steklov spectrum is known up to some exponentially decreasing sequence, is it possible to recover the conformal factor $f$ in the neigbourhood of the boundary (or one of its component) up to a natural gauge invariance ? The main difficulty that appears here is due to the presence of two sets of eigenvalues, in each spectrum $\sigma\big(\Lambda_g(\omega)\big)$ and $\sigma\big(\Lambda_{\tilde{g}}(\omega)\big)$, instead of one as in \cite{daude2019stability}. With the previous definitions of closeness, it is not clear that we can get, for example, this kind of implication : 
\begin{center}
	$\big($ $\sigma\big(\Lambda_g(\omega)\big)$ and $\sigma\big(\Lambda_{\tilde{g}}(\omega)\big)$ close $\big)$ $\Rightarrow$ $\big($ $\lambda^-(\mu_m)$ and $\tilde {\lambda}^-(\mu_m)$ close for all $m\in\mathbb{N}$ $\big)$ 
	
	$\hspace{7.2cm}$ or $\big($ $\lambda^-(\mu_m)$ and $\tilde {\lambda}^+(\mu_m)$ close for all $m\in\mathbb{N}$ $\big)$.
\end{center}
In order to overcome this problem, we will need to do some hypotheses on the warping function $f$ to introduce a kind of asymmetry on the metric on each component. In that way, the previous implication will be true by replacing $\mathbb{N}$ with an infinite subset $\mathcal{L}\subset \mathbb{N}$ that satisfies, for $m$ large enough, $\mathcal{L}\cap\{m,m+1\}\ne \emptyset$. In other word, the frequency of integers belonging to $\mathcal{L}$ will be greater than $1/2$.

$\quad$

\noindent {\bf Stability.} As regards the problem of stability, if the Steklov eigenvalues are known up to a positive, fixed and small $\varepsilon$, is it possible to find an approximation of the conformal factor $f$ depending on $\varepsilon$ ? Thanks to Theorem \ref{theorem1}, we know that there is no uniqueness in the problem of recovering $f$ from $\sigma\big(\Lambda_g(\omega)\big)$. This seems to be a serious obstruction to establish any result of stability in a general framework. Indeed, the uniqueness problem solved in Theorem \ref{theorem1}  is quite rigid (as well as the local uniqueness result) and is based on analyticity results that can no longer be used here. On the contrary, the condition  
	\begin{equation*}
\sigma\big(\Lambda_g(\omega)\big)\underset{\varepsilon}{\asymp}\sigma\big(\Lambda_{\tilde{g}}(\omega)\big)
\end{equation*} 
seems much less restrictive than an equality, and the non-uniqueness makes this new problem quite difficult to tackle. From Theorem \ref{theorem1}, we see that the only way to get a strict uniqueness result is to assume that $f$ is symmetric with respect to $1/2$. This natural - albeit restrictive - condition will be made on $f$ in Section 4 devoted to the stability result.

\subsection{The main results}	

\medskip

\begin{definition}
The class of functions $\mathcal{D}_b$ is defined by 
\begin{center}
	$\mathcal{D}_b=\{h\in C^\infty([0,1])\:\:\big|\:\:\:\exists k\in\mathbb{N},\:\: h^{(k)}(0)\ne (-1)^kh^{(k)}(1)\}$.
\end{center}
\end{definition}

\medskip

\begin{definition}
	The potential associated to the conformal factor $f$ is the function $q_f$ defined on $[0,1]$ by $\displaystyle q_f=\frac{(f^{\frac{n-2}{4}})''}{f^{\frac{n-2}{4}}}-\omega f$.
\end{definition}

\noindent The potential $q_f$ naturally appears when we solve the problem (\ref{Schr_2}) by separating the variables in order to get an infinite system of ODE. We have at last to precise the following notation that will appear in the statement of Theorem \ref{pseudostab_2}.

$\quad$

\noindent {\bf Notation:} Let $x_0$ be in $\mathbb{R}$ and $g$ be a real function such that $\lim\limits_{x\to x_0}g(x)=0$. 

\medskip

\noindent We say that $f(x)\underset{x_0}{=}\tilde{O}\big(g(x)\big)$ if
\begin{center}
$\displaystyle \forall \varepsilon>0\,\:\: \lim\limits_{x\to x_0}\frac{|f(x)|}{|g(x)|^{1-\varepsilon}}=0$.
\end{center}
	
\noindent Here is our local uniqueness result.

\begin{theorem}
	\label{pseudostab_2}
	Let $M=[0,1]\times \mathbb{S}^{n-1}$ be a smooth Riemannian manifold equipped with the metrics
	\begin{center}
		$ g=f(x)(dx^2+g_\mathbb{S}),$ \ \  $\tilde{g}=\tilde{f}(x)(dx^2+g_\mathbb{S})$,
	\end{center}
	and let $\omega$ be a frequency not belonging to the Dirichlet spectrum of  $-\Delta_g$ or $-\Delta_{\tilde{g}}$ on $M$. Let $a\in ]0,1[$ and $\mathcal{E}$ be the set of all the positive sequences $(\varepsilon_m)_m$ satisfying 
	\begin{equation*}
	\varepsilon_m= \tilde{O}\big(e^{-2a\sqrt{\mu_m}}\big),
	\end{equation*}
	
	$\quad$
	
	\noindent In order to simplify the statements of the results, let us denote the propositions:
	
	\begin{enumerate}
		\item[$*$] $(P_1)$ : $f=\tilde{f}\:\:\mathrm{on}\:\: [0,a]$ 
		\item[$*$] $(P_2)$ : $f=\tilde{f}\circ \eta  \:\:\mathrm{on}\:\: [0,a]$
		\item[$*$] $(P_3)$: $f=\tilde{f}\:\:\mathrm{on}\:\: [1-a,1]$
		\item[$*$] $(P_4)$ : $f=\tilde{f}\circ \eta  \:\:\mathrm{on}\:\: [1-a,1]$
	\end{enumerate}
	\noindent where $\eta(x)=1-x$ for all $x \in [0,1]$.
	
	$\quad$
	
	\noindent Assume that $f$ and $\tilde{f}$ belong to $\displaystyle C^\infty([0,1])\cap\mathcal{C}_b$ where $\mathcal{C}_b$ is defined as 
	\begin{center}
	$\displaystyle \mathcal{C}_b=\bigg\{\bullet\:\: \bigg|\frac{f'(k)}{f(k)}\bigg|\le \frac{1}{n-2},\:k\in\{0,1\},\qquad \bullet\:\: q_f\in\mathcal{D}_b\bigg\}$.
	\end{center} Then :
	
	$\quad$
	
	\noindent $\bullet$ For $n=2$ and $\omega\ne 0$ or $n\ge 3$:
	\begin{equation*}
	\bigg(\exists \:(\varepsilon_m)\in\mathcal{E},\:\:\sigma(\Lambda_g(\omega)) \underset{(\varepsilon_m)}{\asymp} \sigma(\Lambda_{\tilde{g}}(\omega))\bigg)	\Rightarrow (P_1)\:\:\mathrm{or}\:\: (P_2)\:\:\mathrm{or}\:\: (P_3)\:\:\mathrm{or}\:\: (P_4),
	\end{equation*}
	
	$\quad$
	
\begin{remark}
	When $n=2$, the condition $\displaystyle \bigg|\frac{f'(k)}{f(k)}\bigg|\le \frac{1}{n-2}$ is always satisfied.
\end{remark}
	
	\begin{remark}
		The converse is not true if $f(0)\ne f(1)$. If one of the $(P_i)$ is satisfied, it cannot imply more than the closeness of one of the subsequence $\big(\lambda^-(\mu_m)\big)$ or $\big(\lambda^+(\mu_m)\big)$ with $\big(\tilde{\lambda}^-(\mu_m)\big)$ or $\big(\tilde{\lambda}^+(\mu_m)\big)$.
	\end{remark}

\medskip

	\noindent {\bf Special case} : when $f(0)=f(1)$, we have the following equivalence :
	\begin{equation*}
	\begin{aligned}
	\bigg(\exists \:(\varepsilon_m)\in\mathcal{E},\:\:\sigma(\Lambda_g(\omega)) \underset{(\varepsilon_m)}{\asymp} \sigma(\Lambda_{\tilde{g}}(\omega))\bigg)	\Leftrightarrow \bigg((P_1)\:\:\mathrm{and}\:\: (P_3)&\bigg)\:\:\mathrm{or}\:\: \bigg((P_2)\:\:\mathrm{and}\:\: (P_4)\bigg).
	\end{aligned}
	\end{equation*}
\end{theorem}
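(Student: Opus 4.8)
The plan is to reduce the Steklov problem, via separation of variables, to a family of one-dimensional Schrödinger problems on $[0,1]$, to translate the spectral closeness hypothesis into an exponential closeness statement for the associated Weyl--Titchmarsh functions, and then to combine a complex-analytic uniqueness theorem with a local Borg--Marchenko theorem to recover $q_f$, and finally $f$, near one of the two boundary components. Expanding $u$ and $v$ on a Hilbert basis of spherical harmonics $(Y_m)_m$ of $-\Delta_{g_\mathbb{S}}$ (with $-\Delta_{g_\mathbb{S}}Y_m=\mu_m Y_m$) and substituting $v=f^{\frac{n-2}{4}}u$, the Dirichlet problem (\ref{Schr_2}) becomes on each eigenspace the regular equation $-w''+q_fw=-\mu_m w$ on $[0,1]$. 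Hence $\Lambda_g(\omega)$ is block-diagonal in that basis and on the $\mu_m$-block it is a symmetric $2\times 2$ matrix $\mathcal{M}_f(\mu_m)$, the two rows corresponding to the two components $\{0\}\times\mathbb{S}^{n-1}$ and $\{1\}\times\mathbb{S}^{n-1}$ of $\partial M$, whose entries are explicit in the characteristic function and the Weyl--Titchmarsh functions $M_f^0,M_f^1$ of $-\partial_x^2+q_f$ on $[0,1]$; its eigenvalues are exactly $\lambda^{-}(\mu_m)\le\lambda^{+}(\mu_m)$. From Lemma \ref{asymp_vp_2} the off-diagonal entry of $\mathcal{M}_f(\mu_m)$ is $\tilde{O}(e^{-\sqrt{\mu_m}})$, so each $\lambda^{\pm}(\mu_m)$ equals a diagonal entry of $\mathcal{M}_f(\mu_m)$ --- i.e. the value at $-\mu_m$ of one of $M_f^0,M_f^1$, possibly after $x\mapsto 1-x$ --- up to $\tilde{O}(e^{-2\sqrt{\mu_m}})$, which is negligible with respect to $\varepsilon_m=\tilde{O}(e^{-2a\sqrt{\mu_m}})$ since $a<1$.

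The main obstacle is the next step. Because the eigenvalues of $\Lambda_g(\omega)$ lying in different blocks are separated (again by Lemma \ref{asymp_vp_2}), the hypothesis $\sigma(\Lambda_g(\omega))\underset{(\varepsilon_m)}{\asymp}\sigma(\Lambda_{\tilde g}(\omega))$ forces, for all large $m$, the unordered pair $\{\lambda^{-}(\mu_m),\lambda^{+}(\mu_m)\}$ to be $\varepsilon_m$-close to $\{\tilde\lambda^{-}(\mu_m),\tilde\lambda^{+}(\mu_m)\}$; there remain two possible pairings at each $m$, and --- in contrast with the connected-boundary model of \cite{daude2019stability} --- one must fix one of them on a sufficiently large set of indices. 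This is exactly the purpose of the hypothesis $q_f\in\mathcal{D}_b$: if $k$ is the least integer with $q_f^{(k)}(0)\ne(-1)^kq_f^{(k)}(1)$, then the two diagonal entries of $\mathcal{M}_f(\mu_m)$ differ, as $m\to+\infty$, by a quantity of polynomial size in $\mu_m^{-1/2}$ (or of size $\asymp\sqrt{\mu_m}$ if moreover $f(0)\ne f(1)$), which dominates the exponentially small off-diagonal correction and so prevents the pairing from oscillating; a pigeonhole argument on consecutive indices then produces an infinite set $\mathcal{L}\subset\mathbb{N}$ with $\mathcal{L}\cap\{m,m+1\}\ne\emptyset$ for $m$ large, along which one fixed situation among four holds, each of which reads
\begin{equation*}
M_f^{\,j}(-\mu_m)-M_{\tilde f}^{\,j'}(-\mu_m)=\tilde{O}\big(e^{-2a\sqrt{\mu_m}}\big),\qquad m\in\mathcal{L},
\end{equation*}
for suitable $j,j'\in\{0,1\}$ (the four situations corresponding, after the relabelling $x\mapsto 1-x$, to the four germs $(P_1)$--$(P_4)$).

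Each Weyl--Titchmarsh function extends, in the spectral variable $z$, to a meromorphic function which, after the substitution $\zeta=\sqrt{-z}$, lies in a Nevanlinna class in a half-plane and is of exponential type. Since $\mathcal{L}\cap\{m,m+1\}\ne\emptyset$ eventually and $\sqrt{\mu_m}$ grows linearly in the index of the distinct eigenvalues (Weyl asymptotics for $-\Delta_{g_\mathbb{S}}$), the set $\{\sqrt{\mu_m}:m\in\mathcal{L}\}$ has positive density with bounded gaps on the positive real axis, so a Nevanlinna-type uniqueness theorem upgrades the above estimate to $M_f^{\,j}(-\mu)-M_{\tilde f}^{\,j'}(-\mu)=\tilde{O}(e^{-2a\sqrt{\mu}})$ as $\mu\to+\infty$. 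By the local Borg--Marchenko theorem this implies $q_f=q_{\tilde f}$, or $q_f=q_{\tilde f}\circ\eta$, on $[0,a]$, or --- working at the endpoint $1$ --- on $[1-a,1]$. It remains to pass from the potential to the warping function: the leading terms of Lemma \ref{asymp_vp_2} give $f$ and then $f'$ at the relevant endpoint, the bound $\big|f'(k)/f(k)\big|\le\frac1{n-2}$ removing the sign ambiguity in that reconstruction, and the assumption $n\ge3$ (or $n=2$ with $\omega\ne0$) ensuring that $f\mapsto q_f=\big(f^{\frac{n-2}{4}}\big)''/f^{\frac{n-2}{4}}-\omega f$, read as a second-order ODE for $f^{\frac{n-2}{4}}$ with Cauchy data at the endpoint, has a unique solution (for $n=2$ simply $q_f=-\omega f$); Cauchy--Lipschitz then gives $f=\tilde f$ on the interval, i.e. one of $(P_1),(P_2),(P_3),(P_4)$.

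In the special case $f(0)=f(1)$ the two diagonal entries of $\mathcal{M}_f(\mu_m)$ have the same leading term, and $\lambda^{-}(\mu_m)$, $\lambda^{+}(\mu_m)$ then track respectively the smaller and the larger of them up to $\tilde{O}(e^{-2\sqrt{\mu_m}})$; running the argument above with both eigenvalue families simultaneously recovers $f$ near \emph{both} boundary components with a consistent orientation, which gives $\big((P_1)$ and $(P_3)\big)$ or $\big((P_2)$ and $(P_4)\big)$. For the converse, if $f=\tilde f$ on $[0,a]\cup[1-a,1]$ then the easy direction of the local Borg--Marchenko theorem gives that the diagonal entries of $\mathcal{M}_f(\mu_m)$ and $\mathcal{M}_{\tilde f}(\mu_m)$ agree up to $\tilde{O}(e^{-2a\sqrt{\mu_m}})$, while the off-diagonal entries, of size $\tilde{O}(e^{-\sqrt{\mu_m}})$, affect the eigenvalues only through their square divided by the non-small difference of the diagonal entries and hence contribute only $\tilde{O}(e^{-2\sqrt{\mu_m}})$; choosing $\varepsilon_m$ to be this combined error yields a sequence of $\mathcal{E}$ realising $\sigma(\Lambda_g(\omega))\underset{(\varepsilon_m)}{\asymp}\sigma(\Lambda_{\tilde g}(\omega))$.
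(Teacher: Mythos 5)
Your overall route---separation of variables, the $2\times2$ block structure with Weyl--Titchmarsh entries, exponential closeness of the relevant Weyl--Titchmarsh functions along a set of indices $\mathcal{L}$ meeting every pair $\{m,m+1\}$, a density/exponential-type argument to upgrade this to the whole half-line, the local Borg--Marchenko theorem, and finally the ODE for $f^{\frac{n-2}{4}}$ with Cauchy data at the endpoint---is the same as the paper's. The genuine gap is in the step you dispose of in one sentence: you claim that, because eigenvalues lying in different blocks are ``separated'', the hypothesis $\sigma(\Lambda_g(\omega))\underset{(\varepsilon_m)}{\asymp}\sigma(\Lambda_{\tilde g}(\omega))$ forces the unordered pair $\{\lambda^{-}(\mu_m),\lambda^{+}(\mu_m)\}$ to be $\varepsilon_m$-close to the pair $\{\tilde\lambda^{-}(\mu_m),\tilde\lambda^{+}(\mu_m)\}$ \emph{with the same index} $m$. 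This is false as stated, and it is exactly the main difficulty of the theorem. The definition of closeness only provides, for $\lambda^{-}(\kappa_m)$, some partner $\tilde\lambda^{\pm}(\kappa_\ell)$ with $\ell$ a priori unrelated to $m$; and when $f(0)\ne f(1)$ the two families $\lambda^{-}(\kappa_m)\approx m/\sqrt{f(1)}$ and $\lambda^{+}(\kappa_m)\approx m/\sqrt{f(0)}$ are arithmetic-like progressions with \emph{different} gaps, so eigenvalues from different blocks and different indices are not uniformly separated: near-coincidences $\lambda^{-}(\kappa_m)\approx\tilde\lambda^{+}(\kappa_p)$ with $p\ne m$ do occur, and nothing in your argument excludes that the matching uses them for infinitely many $m$. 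The paper overcomes this with two ingredients you do not have: first the counting argument of Lemma \ref{alt_2_1} (Weyl law plus a contradiction based on the distinct common differences) showing $\sqrt{f(0)}+\sqrt{f(1)}=\sqrt{\tilde f(0)}+\sqrt{\tilde f(1)}$ and then $\{f(0),f(1)\}=\{\tilde f(0),\tilde f(1)\}$; second Lemma \ref{un_sur_deux_local_2} (via Lemma \ref{image_2}), which shows that the indices at which $\lambda^{-}$ is matched to the \emph{other} family cannot contain two consecutive integers---this, not a pigeonhole on the two pairings at a fixed $m$, is what produces $\mathcal{L}$. Moreover, in this case $f(0)\ne f(1)$ the hypothesis $q_f\in\mathcal{D}_b$ plays no role in fixing the pairing; it is used to validate Lemma \ref{asymp_vp_2} and, separately, only in the subcase $f(0)=f(1)$, $C_0=-C_1$.

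Two further points follow from the same omission. The index alignment $\ell=m$ along $\mathcal{L}$ is obtained in the paper precisely from the hypothesis $\big|f'(k)/f(k)\big|\le\frac{1}{n-2}$, which forces the integer offset $m-\ell$ (equal, up to $o(1)$, to a difference of $\frac{1}{4}(\ln h)'$-terms) to vanish; its role is not to remove a ``sign ambiguity'' in reconstructing $f$ from $q_f$, as you state. And the Cauchy data $f(1)=\tilde f(1)$, $f'(1)=\tilde f'(1)$ that you need before invoking Cauchy--Lipschitz are consequences of this alignment (Lemma \ref{alt_2_1} and Remark \ref{constants_2}); in your write-up they are asserted rather than derived. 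Once these matching statements are in place, the remainder of your sketch (an entire-function argument along a sequence staying within $O(1)$ of the integers---in the paper via Simon's $A$-function representation and Boas' theorem---then local Borg--Marchenko, the ODE step, and the converse in the symmetric special case) does coincide with the paper's proof.
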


$\quad$

$\quad$

\noindent Let us also give our stability result. It requires to assume that, for some $A>0$, the unknown conformal factor belongs to the set of $A$-admissible functions that we define now.

\medskip 

\begin{definition}
	Let $A>0$. The set of $A$-admissible functions is defined as :
	\begin{center}
		$\displaystyle \mathcal{C}(A)=\bigg\{\bullet \:f\in C^2([0,1])\quad\:\:\quad\bullet\:\forall k\in [\![0,2]\!],\:\: \|f^{(k)}\|_\infty +\bigg\|\frac{1}{f}\bigg\|_\infty\le A\bigg\}$
	\end{center}
\end{definition}

\noindent

\noindent 

$\quad$

\noindent Our stability result for the Steklov problem is the following:

\medskip

\begin{theorem}
	\label{stabresult_2}
Let $M=[0,1]\times \mathbb{S}^{n-1}$ be a smooth Riemanniann manifold equipped with the metrics
\begin{center}
	$ g=f(x)(dx^2+g_\mathbb{S})$, \ \  $\tilde{g}=\tilde{f}(x)(dx^2+g_\mathbb{S})$,
\end{center}
with $f$, $\tilde{f}$ positive on $[0,1]$ and symmetric with respect to $x=1/2$.
	
	$\quad$
	
	\noindent Let $A>0$ be fixed and $\omega$ be a frequency not belonging to the Dirichlet spectrum of $-\Delta_g$ and $-\Delta_{\tilde{g}}$ on $M$. Then, for $n\ge2$, for a sufficiently small $\varepsilon>0$ and under the assumption 
	\begin{equation*}
	f,\tilde{f}\in\mathcal{C}(A)
	\end{equation*}			 we have the implication :
	\begin{equation*}
	\sigma(\Lambda_g(\omega)) \underset{\varepsilon}{\asymp} \sigma(\Lambda_{\tilde{g}}(\omega)\big)	\Rightarrow \big\|q_f-\tilde{q}_f\big\|_2\le C_A\:\frac{1}{\ln\big(\frac{1}{\varepsilon}\big)}
	\end{equation*}
	
	$\quad$
	
	\noindent where $C_A$ is a constant that only depends on $A$.
\end{theorem}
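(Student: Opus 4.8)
The plan is to reduce the Steklov problem to a one–dimensional inverse spectral problem and then run a quantitative inversion based on the Müntz–Jackson theorem. \emph{Step 1 (separation of variables).} Expanding on the spherical harmonics $Y_m$ of $-\Delta_{g_\mathbb S}$ (eigenvalue $\mu_m$) and performing the Liouville substitution $u=f^{-(n-2)/4}\,w(x)\,Y_m(\theta)$, the Dirichlet problem (\ref{Schr_2}) becomes, on each harmonic, the scalar Schrödinger equation $-w''+q_f\,w=-\mu_m\,w$ on $[0,1]$. Since $\partial M$ has two components, $\Lambda_g(\omega)$ restricted to the $\mu_m$–eigenspace is a $2\times2$ matrix $\mathcal M_g(\mu_m)$ whose entries are built from the Weyl–Titchmarsh functions of this one–dimensional operator at energy $-\mu_m$, and whose eigenvalues are exactly $\lambda^{\pm}(\mu_m)$ (each with multiplicity $\mathrm{mult}(\mu_m)$). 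The symmetry of $f$ about $x=1/2$ is what makes this tractable: the involution $\eta$ identifies the two boundary components, so $\mathcal M_g(\mu_m)=a(\mu_m)\,I-b(\mu_m)\,J$ with $J$ the flip of the two components; hence $\lambda^{\pm}(\mu_m)=a(\mu_m)\pm b(\mu_m)$, with $a(\mu_m)=\tfrac12\,\mathrm{tr}\,\mathcal M_g(\mu_m)$ an affine function — coefficients depending only on $f(0)=f(1)$ — of the Weyl function $M_{q_f}(-\mu_m)$ of $-\tfrac{d^2}{dx^2}+q_f$ on $[0,1]$ with the Dirichlet condition at $x=1$, and $b(\mu_m)=O\!\big(e^{-c\sqrt{\mu_m}}\big)$ with $c,C$ depending only on $A$ because $f\in\mathcal C(A)$.

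\emph{Step 2 (from close spectra to close Weyl functions at the sampling points).} Using the eigenvalue asymptotics of Lemma \ref{asymp_vp_2}, the Steklov spectrum splits into clusters, one per distinct eigenvalue $\mu$ of $-\Delta_{g_\mathbb S}$: the pair $\{\lambda^+(\mu),\lambda^-(\mu)\}$, of total multiplicity $2\,\mathrm{mult}(\mu)$, sits in a window of exponentially small width $2b(\mu)$, while distinct clusters are separated by a gap bounded below by a constant $\delta=\delta(A)>0$ (here one uses that for $\mathbb S^{n-1}$ the numbers $\sqrt{\mu_{m+1}}-\sqrt{\mu_m}$ are bounded below). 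Therefore, for $\varepsilon$ small enough (depending only on $A$), the relation $\sigma(\Lambda_g(\omega))\underset{\varepsilon}{\asymp}\sigma(\Lambda_{\tilde g}(\omega))$ forces a cluster–by–cluster bijection with matched multiplicities — the finitely many low clusters being handled directly, since there $b$ need not yet be negligible — and in particular $\big|(\lambda^++\lambda^-)(\mu_m)-(\tilde\lambda^++\tilde\lambda^-)(\mu_m)\big|\le 2\varepsilon$. Reading off also $f(0)$ and $\tilde f(0)$ from the leading asymptotics, this yields
\[
\big|M_{q_f}(-\mu_m)-M_{\tilde q_f}(-\mu_m)\big|\le C_A\,\varepsilon\qquad\text{for all }m.
\]

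\emph{Step 3 (quantitative inversion; the main obstacle).} It remains to turn this discrete, $\varepsilon$–accurate knowledge of the Weyl function — sampled at the energies $-\mu_m$, the distinct values of $\sqrt{\mu_m}$ being $\sqrt{k(k+n-2)}=k+\tfrac{n-2}{2}+o(1)$ for $k\ge 0$, i.e. essentially the integers — into an $L^2$ bound on $q_f-\tilde q_f$. I would use the $A$–amplitude representation $M_{q}(-\kappa^2)=-\kappa-\int_0^1 A_q(t)\,e^{-2\kappa t}\,dt+(\text{boundary corrections})$, valid with uniform control over $\mathcal C(A)$ of the amplitude $A_q$ and of the locally–Lipschitz, close–to–identity correspondence $q\leftrightarrow A_q$. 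Subtracting the two representations cancels the singular term $-\kappa$ and shows that the Laplace transform of $h:=A_{q_f}-A_{\tilde q_f}$ is $\le C_A\varepsilon$ at the points $2\sqrt{\mu_m}$, while $\|h\|_\infty\le C_A$. The change of variable $s=e^{-2t}$ recasts this as the statement that the generalized moments $\int \tilde h(s)\,s^{\,z_k}\,ds$ of a bounded function are $\le C_A\varepsilon$, the distinct exponents $z_k\sim k$ satisfying the Müntz condition $\sum 1/z_k=+\infty$. The quantitative Müntz–Jackson theorem then bounds $\|h\|_2$: because the sampling exponents grow only linearly, approximating an arbitrary $L^2$ target by Müntz combinations $\sum_{k\le K}c_k s^{z_k}$ costs a factor growing exponentially in $K$, and balancing this against the $\varepsilon$–smallness of the moments produces exactly a logarithmic rate $\|h\|_2\le C_A/\ln(1/\varepsilon)$. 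Finally $\|q_f-\tilde q_f\|_2\le C_A\|h\|_2\le C_A/\ln(1/\varepsilon)$, the symmetry of $q_f$ about $1/2$ letting one work on $[0,1/2]$ and transfer the estimate to $[0,1]$. The genuinely delicate points are (i) keeping every constant dependent on $A$ alone, uniformly in $\varepsilon$ and $m$, and (ii) the precise Müntz–Jackson estimate delivering the logarithm; this last is the technical heart and follows the scheme of \cite{daude2019stability}, the new ingredient being the treatment of the $2\times2$ DN matrix coming from the disconnected boundary by means of the symmetry of $f$.
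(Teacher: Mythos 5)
Your Steps 1 and 2 follow the paper's own route (Lemma \ref{coeff_2}, Lemma \ref{prox_induite}, then the trace of the $2\times2$ block), up to one small omission: the diagonal entry is not an affine function of $M_{q_f}(-\mu_m)$ with coefficients depending only on $f(0)$ — it contains the constant $C_0=\frac{(\ln h)'(0)}{4\sqrt{f(0)}}$, which involves $f'(0)$, so you must also prove $|C_0-\tilde C_0|\le C_A\varepsilon$ (the paper does this via (\ref{decalage_2})--(\ref{hypocondriaque_2})); granting that, your conclusion $|M(\kappa_m)-\tilde M(\kappa_m)|\le C_A\varepsilon$ is correct. The genuine gap is Step 3, where you invert using only this trace information through Simon's $A$-amplitude representation. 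That representation (Theorem \ref{Simon_repr_2}) is an asymptotic statement for a fixed potential, valid on $[0,a]$ with $a<1$, with an error $\tilde O(e^{-2az})$ that is neither quantified uniformly over $\mathcal{C}(A)$ nor small compared with $\varepsilon$ at the low sampling points. The Müntz--Jackson balancing needs the moment bounds $\big|\int_0^1 t^{\lambda_k}h(t)\,dt\big|\le C_A\varepsilon$ for \emph{all} exponents from a fixed $\lambda_{m_0}$ up to $\lambda_{m(\varepsilon)}$ with $m(\varepsilon)\sim\ln(1/\varepsilon)$; for $k$ of order one your error is of size $O(e^{-2a y_k})$, i.e.\ a fixed constant, not $O(\varepsilon)$, and since the coefficients $C_{k\ell}$ grow exponentially these corrupted low moments wreck the $1/\ln(1/\varepsilon)$ rate. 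The same obstruction reappears if you replace the asymptotic representation by the exact identity $(N-\tilde N)\Delta\tilde\Delta=\int_0^1(q-\tilde q)s_0\tilde s_0\,dx$: the leading product $\sinh^2(y_mx)/y_m^2=\frac{1}{2y_m^2}\big(\cosh(2y_mx)-1\big)$ produces the zero-mode $-\frac{1}{y_m^2}\int_0^1(q-\tilde q)$, and this is precisely why the paper's Calder\'on stability result (Theorem \ref{Calderon_stab_2}) must assume $\big|\int_0^1(q_f-\tilde q_f)\big|\le\varepsilon$ — an assumption you do not have in the Steklov setting.

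The paper circumvents this by also using the determinant of the block: Proposition \ref{WT_estimates_2} controls the difference of $MN-1/\Delta^2$, and the exact Wronskian identity of Lemma \ref{integral_relation_2}, symmetrized using $q=q\circ\eta$, gives $\Delta\tilde\Delta\big[(MN-\tfrac{1}{\Delta^2})-(\tilde M\tilde N-\tfrac{1}{\tilde\Delta^2})\big]=\int_0^1(q-\tilde q)\,(c_0\tilde s_0+\tilde c_0 s_0)\,dx$; the $\sinh\cdot\cosh$ products yield a pure $\sinh(2\tau y_m)$ moment problem with no constant term, all kernels being given exactly by Marchenko transformation operators with bounds depending only on $A$ (Proposition \ref{noyau_estim_2}), and the passage from $\|BL\|_2$ to $\|q-\tilde q\|_2$ is done by inverting $B=I+C$ through a Volterra series. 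That inversion is the uniform substitute for your unproven claim that $q\mapsto A_q$ is ``locally Lipschitz and close to identity'' with an inverse bound $\|q-\tilde q\|_2\le C_A\|A_{q}-A_{\tilde q}\|_2$. To repair your argument you would either have to establish a uniform, exact $A$-amplitude representation on the full interval with controlled boundary corrections and prove that inverse bound, or, more simply, switch to the determinant combination as the paper does.
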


\medskip

\noindent As a by-product, we get two corollaries :

\medskip

\begin{corollary}
	\label{cor_difpot_2}
	Using the same notations and assumptions as in Theorem \ref{stabresult_2}, for all $0\le s\le 2$, we have
	\begin{equation*}
	\sigma(\Lambda_g(\omega)) \underset{\varepsilon}{\asymp} \sigma(\Lambda_{\tilde{g}}(\omega)\big)	\Rightarrow \big\|q_f-\tilde{q}_f\big\|_{H^s(0,1)}\le C_A\:\frac{1}{\ln\big(\frac{1}{\varepsilon}\big)^\theta}
	\end{equation*}
	\noindent where $\theta = (2-s)/2$ and $C_A$ is a constant that only depends on $A$. In particular, from the Sobolev embedding, one gets
	\begin{equation*}
	\sigma(\Lambda_g(\omega)) \underset{\varepsilon}{\asymp} \sigma(\Lambda_{\tilde{g}}(\omega)\big)	\Rightarrow \big\|q_f-\tilde{q}_f\big\|_{\infty}\le C_A\:\sqrt{\frac{1}{\ln\big(\frac{1}{\varepsilon}\big)}}
	\end{equation*}
\end{corollary}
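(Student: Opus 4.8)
The plan is to derive Corollary \ref{cor_difpot_2} from Theorem \ref{stabresult_2} by a standard interpolation argument, combined with an \emph{a priori} bound coming from the $A$-admissibility of the conformal factors. First I would observe that the potentials $q_f$ and $\tilde q_f$ are uniformly bounded in $H^2(0,1)$: indeed, since $f,\tilde f\in\mathcal{C}(A)$ the functions $f^{(n-2)/4}$ and their first two derivatives are controlled in terms of $A$ (using $\|1/f\|_\infty\le A$ to bound the denominators away from zero), hence by the definition $q_f=(f^{(n-2)/4})''/f^{(n-2)/4}-\omega f$ one gets $\|q_f\|_{H^2}\le C_A$ and likewise for $\tilde q_f$, so that
\begin{equation*}
\|q_f-\tilde q_f\|_{H^2(0,1)}\le C_A .
\end{equation*}
(One must be a little careful when $n=2$, where $f^{(n-2)/4}\equiv 1$ and $q_f=-\omega f$, but the bound is then immediate.)

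Next I would combine this $H^2$ bound with the $L^2$ estimate of Theorem \ref{stabresult_2} via the Sobolev interpolation inequality: for $0\le s\le 2$,
\begin{equation*}
\|u\|_{H^s}\le C\,\|u\|_{L^2}^{1-s/2}\,\|u\|_{H^2}^{s/2}.
\end{equation*}
Applying this with $u=q_f-\tilde q_f$, using $\|u\|_{L^2}\le C_A/\ln(1/\varepsilon)$ from Theorem \ref{stabresult_2} and $\|u\|_{H^2}\le C_A$ from the previous step, yields
\begin{equation*}
\|q_f-\tilde q_f\|_{H^s(0,1)}\le C_A\Big(\tfrac{1}{\ln(1/\varepsilon)}\Big)^{1-s/2}=C_A\Big(\tfrac{1}{\ln(1/\varepsilon)}\Big)^{\theta},
\end{equation*}
with $\theta=(2-s)/2$, which is the first claimed estimate. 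For the $L^\infty$ bound I would take $s$ strictly between $1/2$ and $1$ (any such $s$ gives a continuous embedding $H^s(0,1)\hookrightarrow L^\infty(0,1)$ since $\dim=1$); taking, say, $s=1$ gives exponent $\theta=1/2$, and the Sobolev embedding $H^1(0,1)\hookrightarrow L^\infty(0,1)$ then gives
\begin{equation*}
\|q_f-\tilde q_f\|_{\infty}\le C\,\|q_f-\tilde q_f\|_{H^1(0,1)}\le C_A\,\sqrt{\tfrac{1}{\ln(1/\varepsilon)}},
\end{equation*}
which is the second claimed estimate.

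The only genuine point requiring care — and the step I expect to be the main obstacle — is establishing the uniform $H^2$ bound on $q_f$ from membership in $\mathcal{C}(A)$, since this involves differentiating the composite function $f^{(n-2)/4}$ twice and controlling the quotient; one has to check that all the derivatives of $f^{(n-2)/4}$ up to order two are bounded by a constant depending only on $A$ and $n$, using $\|f^{(k)}\|_\infty\le A$ for $k\le2$ together with the lower bound $f\ge 1/A$ to keep the negative powers of $f$ that appear in the chain rule under control. Everything else is a routine invocation of interpolation and Sobolev embedding in one dimension, so the corollary follows once this \emph{a priori} bound is in place. One should also note that the constant $C_A$ changes from line to line but always depends only on $A$ (and on the fixed parameters $n$ and $\omega$), which is harmless.
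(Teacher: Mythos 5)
Your proposal is essentially the paper's own proof: the Gagliardo--Nirenberg interpolation $\|q_f-\tilde q_f\|_{H^s}\le\|q_f-\tilde q_f\|_{L^2}^{\theta}\|q_f-\tilde q_f\|_{H^2}^{1-\theta}$ with $\theta=(2-s)/2$, the a priori bound $\|q_f-\tilde q_f\|_{H^2(0,1)}\le C_A$ from $f,\tilde f\in\mathcal{C}(A)$, the $L^2$ estimate of Theorem \ref{stabresult_2}, and then the embedding $H^1(0,1)\hookrightarrow L^\infty(0,1)$ at $s=1$, exactly as in the paper. The one step you flag as delicate, the uniform $H^2$ bound, is simply asserted in the paper as well; note only that your justification of it is slightly off, since $\|q_f\|_{H^2}$ involves $q_f''$ and hence third and fourth derivatives of $f$, not merely the two derivatives controlled by $\mathcal{C}(A)$ (one uses the smoothness of $f$ and, implicitly, control of these higher derivatives).
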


\medskip

\begin{corollary}
	\label{cor_steklov_stab_2}
	Using the same notations and assumptions as in Theorem \ref{stabresult_2}, if moreover $\omega=0$ and $n\ge 3$, one has
	\begin{equation*}
\sigma(\Lambda_g(\omega)) \underset{\varepsilon}{\asymp} \sigma(\Lambda_{\tilde{g}}(\omega)\big)	\Rightarrow \big\|f-\tilde{f}\big\|_\infty\le C_A\:\frac{1}{\ln\big(\frac{1}{\varepsilon}\big)}
\end{equation*}
\noindent where $C_A$ is a constant that only depends on $A$.
\end{corollary}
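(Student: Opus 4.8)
The plan is to deduce the statement from Theorem~\ref{stabresult_2} (equivalently Corollary~\ref{cor_difpot_2}) by observing that, when $\omega=0$, passing from the potential $q_f$ back to the conformal factor $f$ is just a Liouville change of variable for a \emph{linear} Schrödinger equation. Set $v:=f^{\frac{n-2}{4}}$ and $\tilde v:=\tilde f^{\frac{n-2}{4}}$; since $\omega=0$ the definition of $q_f$ reads $v''=q_f\,v$ and $\tilde v''=\tilde q_f\,\tilde v$ on $[0,1]$, with $v,\tilde v>0$. Because $f,\tilde f\in\mathcal{C}(A)$ and $n\ge3$, one has two-sided bounds $c_A\le v,\tilde v\le C_A$ together with $\|q_f\|_\infty,\|\tilde q_f\|_\infty\le C_A$, where $C_A,c_A>0$ denote throughout constants depending only on $A$ (and $n$). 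Since $t\mapsto t^{4/(n-2)}$ is Lipschitz on $[c_A,C_A]$, the corollary will follow once we show $\|v-\tilde v\|_\infty\le C_A\,\|q_f-\tilde q_f\|_2$, the right-hand side being $\le C_A/\ln(1/\varepsilon)$ by Theorem~\ref{stabresult_2}.

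First I would pin down the boundary values. By the leading term of the Steklov asymptotics of Lemma~\ref{asymp_vp_2}, for each of the two families $\lambda^{\pm}(\mu_m)\sim\alpha_k\sqrt{\mu_m}$ as $m\to\infty$, where the coefficient $\alpha_k$ depends only on $f(k)$ and determines it, $k\in\{0,1\}$ (and likewise for $\tilde\lambda^{\pm}$ with $\tilde f(k)$). Since $f$ and $\tilde f$ are symmetric with respect to $1/2$, we have $f(0)=f(1)$ and $\tilde f(0)=\tilde f(1)$, so each spectrum has a single leading coefficient; as $\sqrt{\mu_m}\to+\infty$ while the two spectra are $\varepsilon$-close for a fixed $\varepsilon$, the discrepancy $|\alpha_0-\tilde\alpha_0|\sqrt{\mu_m}$ must stay bounded, which forces $\alpha_0=\tilde\alpha_0$, hence $f(0)=\tilde f(0)$. (In fact only $|f(0)-\tilde f(0)|\le C_A\varepsilon$ is needed, and it follows already from the first two terms of the asymptotics; this would add a harmless $O(\varepsilon)$ below.) Consequently $h:=v-\tilde v$ satisfies $h(0)=h(1)=0$.

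Next I would run an elliptic estimate for $h$. Subtracting the two ODEs gives
\begin{equation*}
-\,h''+q_f\,h=-(q_f-\tilde q_f)\,\tilde v=:-r,\qquad h(0)=h(1)=0,\qquad \|r\|_2\le C_A\,\|q_f-\tilde q_f\|_2 .
\end{equation*}
The crucial point is that $-\partial_x^2+q_f$, with Dirichlet conditions on $(0,1)$, is invertible with operator norm bounded only in terms of $A$, and this is where positivity of $v$ enters: since $v>0$ solves $v''=q_f v$, the ground-state substitution $\phi=v\xi$ yields $\int_0^1\big((\phi')^2+q_f\phi^2\big)\,dx=\int_0^1 v^2(\xi')^2\,dx$ for all $\phi\in H^1_0(0,1)$, and Poincaré's inequality for $\xi$ then gives a lower bound $\ge c_A^2\pi^2/C_A^2>0$ for the bottom Dirichlet eigenvalue of $-\partial_x^2+q_f$, uniformly over $\mathcal{C}(A)$. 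Hence $\|h\|_2\le C_A\|r\|_2\le C_A\|q_f-\tilde q_f\|_2$. A one-dimensional bootstrap then finishes the job: from $h''=q_f h+r$ with $h(0)=h(1)=0$ one recovers $h'(0)$, then controls $\|h'\|_\infty$, and finally $\|h\|_\infty\le\|h'\|_\infty\le C_A\|q_f-\tilde q_f\|_2$. Combining with the Lipschitz bound of the first paragraph and with Theorem~\ref{stabresult_2} gives $\|f-\tilde f\|_\infty\le C_A/\ln(1/\varepsilon)$.

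The two delicate points are the identification of the boundary values in the second paragraph — which needs the precise form of the Steklov asymptotics and some bookkeeping of the two interlaced families $\lambda^{\pm}$ — and the uniformity (over $\mathcal{C}(A)$) of the spectral gap in the third paragraph, where the positive solution $v$ is exactly what prevents $-\partial_x^2+q_f$ from developing a near-zero Dirichlet eigenvalue inside the class. Finally, the hypotheses $\omega=0$ and $n\ge3$ are both essential for this reduction: for $n=2$ one has $v\equiv1$ and $q_f\equiv-\omega f$, so when $\omega=0$ the potential is identically zero and carries no information on $f$ — in accordance with the conformal gauge invariance recalled in the introduction — while for $\omega\ne0$ the map $f\mapsto q_f$ is a genuinely nonlinear ODE to which the same linear argument does not apply verbatim.
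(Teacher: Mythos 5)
Your proposal is correct, and it reaches the conclusion along a genuinely different route than the paper. The paper's proof is a Wronskian computation: setting $F$ to a positive power of $f$ so that $q=F''/F$ when $\omega=0$, the quantity $G:=\tilde FF'-\tilde F'F$ satisfies $G'=\tilde FF(q-\tilde q)$; Rolle's theorem applied to $F/\tilde F$ (which uses the symmetry through $f(0)=f(1)$ and $\tilde f(0)=\tilde f(1)$) produces a zero of $G$, hence $\|G\|_\infty\le C_A\|q-\tilde q\|_2$, and since $(F/\tilde F)'=G/\tilde F^2$ with $F(0)=\tilde F(0)$ (Lemma \ref{coeff_2}), one integrates once more to bound $\|F/\tilde F-1\|_\infty$ and then $\|f-\tilde f\|_\infty$ by $C_A/\ln(1/\varepsilon)$. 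You instead set up the Dirichlet boundary value problem $-h''+q_fh=-(q_f-\tilde q_f)\tilde v$ for $h=v-\tilde v$ with $h(0)=h(1)=0$, prove a uniform spectral gap for $-\partial_x^2+q_f$ on $H^1_0(0,1)$ via the ground-state representation $\int_0^1\big((\phi')^2+q_f\phi^2\big)=\int_0^1 v^2(\xi')^2$ (valid precisely because $v>0$ solves $v''=q_fv$), and bootstrap to an $L^\infty$ bound on $h$. Both routes consume the same ingredients: the $L^2$ estimate of Theorem \ref{stabresult_2}, the boundary equality of Lemma \ref{coeff_2}, symmetry (which supplies your second Dirichlet condition, or the paper's Rolle point), the two-sided bounds from $\mathcal{C}(A)$, and the linearity of $f\mapsto q_f$ when $\omega=0$. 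Your spectral-gap argument has the merit of isolating the structural reason the reconstruction is stable — the positive zero-energy solution $v$ prevents a near-zero Dirichlet eigenvalue of $-\partial_x^2+q_f$ uniformly on the class — and it would transfer to higher-dimensional or more general linear perturbations; the paper's Wronskian argument is more elementary and fully explicit.
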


$\quad$

\noindent The stability in the inverse Calder\'on problem somehow precedes the inverse Steklov problem, so we say few words about it. Let $\mathcal{B}(H^{1/2}(\partial M))$ be the set of bounded operators from $H^{1/2}(\partial M)$ to $H^{1/2}(\partial M)$ equipped with the norm
\begin{equation*}
\|F\|_*=\sup_{\psi\in H^{1/2}(\partial M)\backslash\{0\}}\frac{\|F\psi\|_{H^{1/2}}}{\|\psi\|_{H^{1/2}}}.
\end{equation*}

	\noindent In Lemma \ref{bounded_equiv_22} (Section $5$) we show the equivalence
	\begin{equation*}
	\Lambda_g(\omega)-\Lambda_{\tilde{g}}(\omega)\in \mathcal{B}(H^{1/2}(\partial M))\Leftrightarrow \left\{\begin{aligned}
	&f(0)=\tilde{f}(0)\\
	&f(1)=\tilde{f}(1)
	\end{aligned}\right.
	\end{equation*}

\noindent and prove the following stability result for the Calder\'on problem. We draw the reader's attention to the fact that the symmetry hypothesis no longer occurs here since the strict uniqueness is true (see \cite{daude2019non}). However, it is replaced by a technical assumption on the mean of the difference of the potentials.

\begin{theorem}
	\label{Calderon_stab_2}
	Let $M=[0,1]\times \mathbb{S}^{n-1}$ be a smooth Riemanniann manifold equipped with the metrics
	\begin{center}
		$ g=f(x)(dx^2+g_\mathbb{S})$, \ \ $\tilde{g}=\tilde{f}(x)(dx^2+g_\mathbb{S})$,
	\end{center}
	with $f$ and $\tilde{f}$ positive on $[0,1]$.
	
	$\quad$
	
	\noindent Let $A>0$ be fixed and $\omega$ be a frequency not belonging to the Dirichlet spectrum of $-\Delta_g$ and $-\Delta_{\tilde{g}}$ on $M$. Let $n\ge2$, $\varepsilon>0$ and assume that
	\begin{enumerate}
		\item [$\bullet$] $f(0)=\tilde{f}(0)\:$ and  $\:f(1)=\tilde{f}(1),$
		\item [$\bullet$] $f,\tilde{f}\in\mathcal{C}(A)$,
		\item [$\bullet$] $\displaystyle \bigg|\int_{0}^{1}\big(q_f-\tilde{q}_f\big)\bigg|+\|\Lambda_g(\omega)-\Lambda_{\tilde{g}}(\omega)\|_*\le \varepsilon$.
	\end{enumerate}  Then :		
	\begin{equation*}
	\big\|q_f-\tilde{q}_f\big\|_2\le C_A\:\frac{1}{\ln\big(\frac{1}{\varepsilon}\big)},
	\end{equation*}
	
	$\quad$
	
	\noindent where $C_A$ is a constant that only depends on $A$.
	
	$\quad$
	
\end{theorem}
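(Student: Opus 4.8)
The plan is to reduce the Calderón stability estimate to a one-dimensional inverse spectral problem for the family of Schrödinger operators obtained by separating variables, and then to extract a log-type modulus of continuity from the explicit structure of the Weyl–Titchmarsh functions. First I would separate variables: writing any solution $u$ of $-\Delta_g u = \omega u$ on $M=[0,1]\times\mathbb S^{n-1}$ in the form $u(x,\theta)=\sum_m v_m(x)\,Y_m(\theta)$ with $Y_m$ the normalized spherical harmonics for the eigenvalue $\mu_m$ of $-\Delta_{g_{\mathbb S}}$, and performing the standard Liouville-type change of unknown $w_m = f^{(n-2)/4} v_m$, one is led to the Sturm–Liouville equation $-w_m'' + q_f w_m = -\mu_m w_m$ on $[0,1]$, where $q_f = (f^{(n-2)/4})''/f^{(n-2)/4}-\omega f$ is exactly the potential appearing in the statement. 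Because $\partial M$ has two components $\{0\}\times\mathbb S^{n-1}$ and $\{1\}\times\mathbb S^{n-1}$, the DN map $\Lambda_g(\omega)$ decomposes blockwise on each harmonic sector into a $2\times2$ matrix whose entries are built from the characteristic (Weyl–Titchmarsh) functions $M_g(-\mu_m)$, $N_g(-\mu_m)$ of the ODE together with the boundary values $f(0),f(1)$ and $f'(0),f'(1)$; under the hypothesis $f(0)=\tilde f(0)$, $f(1)=\tilde f(1)$ (which by Lemma \ref{bounded_equiv_22} is precisely what makes $\Lambda_g(\omega)-\Lambda_{\tilde g}(\omega)$ bounded) the normalizing prefactors cancel, so that $\|\Lambda_g(\omega)-\Lambda_{\tilde g}(\omega)\|_*\le\varepsilon$ translates into a uniform bound, over all $m$, of the form $|M_g(-\mu_m)-M_{\tilde g}(-\mu_m)|+|N_g(-\mu_m)-N_{\tilde g}(-\mu_m)|\lesssim\varepsilon\,\langle\mu_m\rangle^{-1/2}$ or similar, i.e. pointwise closeness of the Weyl functions on the infinite set $\{-\mu_m\}$ which tends to $-\infty$.

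Next I would pass from these discrete values to the full Weyl functions on a large real interval. The Weyl–Titchmarsh functions $z\mapsto M_g(z)$, $N_g(z)$ are holomorphic in $z$ away from the Dirichlet spectrum and, on the negative real axis, have controlled growth governed only by $\|q_f\|$, hence by $A$ since $f\in\mathcal C(A)$ gives a bound on $q_f$ in $C^0$. Using a quantitative complex-analytic interpolation argument — an estimate of Nevanlinna/two-constants type for functions holomorphic and bounded in a region containing the points $-\mu_m$ (whose counting function has density $\sim$ that of $\{m^2\}$ after accounting for the spectral asymptotics of $\mathbb S^{n-1}$) — together with the Müntz–Jackson machinery already advertised in the keywords, I would upgrade ``$\varepsilon$-close at the points $-\mu_m$'' to ``$C_A/\ln(1/\varepsilon)$-close on a compact interval $[-R,0]$'' for a fixed $R=R(A)$, or more precisely to an $L^2$-type bound for $M_g-M_{\tilde g}$ against a suitable measure. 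This is the step where the disconnected boundary forces extra care: one has two Weyl functions $M$ and $N$ per sector rather than one, and the pairing them with the single scalar quantity $\|\Lambda_g-\Lambda_{\tilde g}\|_*$ must be done so that no information is lost; this is also where the hypothesis $|\int_0^1(q_f-\tilde q_f)|\le\varepsilon$ enters, compensating exactly for the one low-frequency degree of freedom (the ``$N$ at $z=0$'' mode, equivalently the mean of the potential) that the bounded-operator norm alone does not pin down — mirroring the role played by the symmetry hypothesis in Theorem \ref{stabresult_2}.

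Finally I would run the one-dimensional Borg–Marchenko-type reconstruction of $q_f$ from its Weyl function, in quantitative form: the map $q\mapsto M_q$ has a locally Lipschitz inverse from (a suitable weighted space of) Weyl functions to $L^2([0,1])$, with constant depending only on an a priori bound on $q$, i.e. on $A$. Composing this Lipschitz inverse with the $C_A/\ln(1/\varepsilon)$ closeness of the Weyl functions obtained in the previous step yields $\|q_f-\tilde q_f\|_2\le C_A/\ln(1/\varepsilon)$, which is the claim. I expect the main obstacle to be the second step — the quantitative analytic continuation from the sparse sampling points $\{-\mu_m\}$ to a fixed interval while keeping the constant depending only on $A$ — because it requires both sharp growth control of the Weyl functions (uniform in the harmonic sector) and a careful bookkeeping of the two scalar Weyl functions against the single operator-norm hypothesis, the interplay of which is genuinely new relative to the connected-boundary setting of \cite{daude2019stability}.
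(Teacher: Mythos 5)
Your first step coincides with the paper's: separation of variables, the block-diagonal form of $\Lambda_g(\omega)$, Lemma \ref{bounded_equiv_22}, and the observation that testing $\Lambda_g(\omega)-\Lambda_{\tilde g}(\omega)$ against $(0,1)\otimes Y_m$ turns the operator-norm hypothesis into pointwise bounds on the Weyl functions at the points $\mu_m$ (the paper's Proposition \ref{WT_estimates_22}, which in fact only needs $N$, and which first absorbs the boundary constants $\tilde C_1-C_1$ using $N(\mu_m)\sim-\sqrt{\mu_m}$ before concluding $|N(\mu_m)-\tilde N(\mu_m)|\le C_A\varepsilon$ — there is no extra gain of $\mu_m^{-1/2}$, nor is one needed). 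After that, however, your plan has a genuine gap, concentrated in your steps two and three. Step two: the points at which you know the Weyl functions are $z=-\mu_m\to-\infty$, where $N-\tilde N=O(\mu_m^{-1/2})$ for \emph{any} pair of potentials bounded in terms of $A$, because the Weyl asymptotics at $-\infty$ are universal; the potential-dependent content of these values sits in exponentially small terms of the form $\int_0^1 A(x)e^{-2x\sqrt{\mu_m}}dx$. A Nevanlinna/two-constants interpolation with constants depending only on $A$ cannot convert $\varepsilon$-closeness at such a sparse sequence escaping to infinity into $C_A/\ln(1/\varepsilon)$-closeness on a fixed compact interval $[-R,0]$; the correct quantitative bookkeeping must go through exactly those exponential weights, which is why the paper sets up a moment problem rather than an analytic-continuation argument. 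Step three: the asserted ``locally Lipschitz inverse'' of $q\mapsto M_q$ from Weyl data on a compact interval to $L^2$ potentials is not available and is in fact false as stated — reconstructing $q$ from $M_q$ on a compact set is severely ill-posed, with at best a logarithmic modulus of continuity; assuming Lipschitz stability there assumes away precisely the difficulty the theorem is about, and it is also where the logarithm in the conclusion must come from.

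For contrast, the paper's route after Proposition \ref{WT_estimates_22} is entirely explicit: the Wronskian identity $\bigl(N(z)-\tilde N(z)\bigr)\Delta(z)\tilde\Delta(z)=\int_0^1\bigl(q(x)-\tilde q(x)\bigr)s_0(x,z)\tilde s_0(x,z)\,dx$, combined with the Marchenko transformation-operator representation of $s_0,\tilde s_0$, produces an operator $D=I+C$ (Proposition \ref{operator_exist_estimate_22}) with $C$ a Volterra-type perturbation bounded by $C_A$, such that the left-hand side evaluated at $z=\kappa_m$ equals $\frac{1}{y_m^2}\int_0^1\cosh(2\tau y_m)\,DL(\tau)\,d\tau-\frac{1}{y_m^2}\int_0^1 L(\tau)\,d\tau$ with $L=q-\tilde q$, $y_m=\sqrt{\kappa_m}$. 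Multiplying by $y_m^2e^{-2y_m}$ and using the third hypothesis $\bigl|\int_0^1(q_f-\tilde q_f)\bigr|\le\varepsilon$ — whose concrete role is to control the term $e^{-2y_m}\int_0^1 L$, which otherwise is only $O_A(1)$ in $\varepsilon$ for small $m$, not the vague ``low-frequency mode'' compensation you describe — one obtains Hausdorff-type moment bounds $\bigl|\int_0^1 t^{2y_m}h(t)\,dt\bigr|\le C_A\varepsilon$, to which the Müntz--Jackson analysis of Section 4.4 (Gram--Schmidt Müntz system, the choice $m(\varepsilon)\sim\ln(1/\varepsilon)$, the modulus-of-continuity and index-of-approximation estimates) applies and yields $\|DL\|_2\le C_A/\ln(1/\varepsilon)$; invertibility of $I+C$ with $\|D^{-1}\|\le C_A$ then gives the claim. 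You invoke the Müntz machinery only in passing, but you never construct the moment problem it would act on; without the integral identity and the transformation operators (or an equivalent quantitative device), the passage from the discrete Weyl-function estimates to $\|q_f-\tilde q_f\|_2\le C_A/\ln(1/\varepsilon)$ is not established.
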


\begin{corollary}
	\label{cor_difpot_2}
	Using the same notations and assumptions as in Theorem \ref{Calderon_stab_2}, for all $0\le s\le 2$, we obtain also
	\begin{equation*}
 \big\|q_f-\tilde{q}_f\big\|_{H^s(0,1)}\le C_A\:\frac{1}{\ln\big(\frac{1}{\varepsilon}\big)^\theta}
	\end{equation*}
	\noindent where $\theta = (2-s)/2$ and $C_A$ is a constant that only depends on $A$. In particular, from the Sobolev embedding, one gets
	\begin{equation*}
 \big\|q_f-\tilde{q}_f\big\|_{\infty}\le C_A\:\sqrt{\frac{1}{\ln\big(\frac{1}{\varepsilon}\big)}}
	\end{equation*}
\end{corollary}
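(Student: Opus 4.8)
\emph{Proof sketch.} This corollary is a soft interpolation consequence of Theorem~\ref{Calderon_stab_2}. First I would record a uniform a priori bound $\|q_f-\tilde{q}_f\|_{H^2(0,1)}\le C_A$: expanding $q_f=\big(f^{\frac{n-2}{4}}\big)''/f^{\frac{n-2}{4}}-\omega f$ by the Leibniz and quotient rules, $q_f$ and its first two derivatives are rational expressions in $f$ and its derivatives in which $f$ enters only through positive powers (bounded away from $0$ since $\|1/f\|_\infty\le A$) and through derivatives controlled by $\mathcal{C}(A)$; subtracting the analogous expression for $\tilde f$ gives the claim. Then, since $H^s(0,1)=\big[L^2(0,1),H^2(0,1)\big]_{s/2}$ for $0\le s\le2$, the interpolation inequality $\|u\|_{H^s}\le C\|u\|_{L^2}^{1-s/2}\|u\|_{H^2}^{s/2}$ applied to $u=q_f-\tilde{q}_f$, together with Theorem~\ref{Calderon_stab_2} and the a priori bound, yields
\[
\|q_f-\tilde{q}_f\|_{H^s(0,1)}\le C\Big(\tfrac{C_A}{\ln(1/\varepsilon)}\Big)^{1-s/2}C_A^{\,s/2}=C_A\,\frac{1}{\ln(1/\varepsilon)^{\theta}},\qquad\theta=\tfrac{2-s}{2}.
\]
For the sup bound, specialising to $s=1$ (so $\theta=\tfrac12$) and using the one-dimensional Sobolev embedding $H^1(0,1)\hookrightarrow C^{0,1/2}([0,1])\hookrightarrow L^\infty(0,1)$ gives $\|q_f-\tilde{q}_f\|_\infty\le C\|q_f-\tilde{q}_f\|_{H^1}\le C_A\sqrt{1/\ln(1/\varepsilon)}$.

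The substance therefore lies in Theorem~\ref{Calderon_stab_2}, whose proof I would organise as follows. Separating variables in $-\Delta_g u=\omega u$ and applying the Liouville substitution $w_m=f^{\frac{n-2}{4}}u_m$ reduces the problem, on the $m$-th spherical harmonic, to the Schrödinger equation $-w_m''+q_fw_m=-\mu_m w_m$ on $[0,1]$; hence $\Lambda_g(\omega)$ is block-diagonal over the harmonics, the block $\Lambda_g^{(m)}$ being a $2\times2$ matrix built from the fundamental system of that ODE at $\lambda=-\mu_m$ and from $f(0),f'(0),f(1),f'(1)$, its diagonal entries being essentially the left/right Weyl–Titchmarsh functions $M_f^L(-\mu_m),M_f^R(-\mu_m)$ of $q_f$ and its off-diagonal entry $-1/s_f(1;-\mu_m)$. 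Since $f(0)=\tilde f(0)$ and $f(1)=\tilde f(1)$ (Lemma~\ref{bounded_equiv_22}), $\Lambda_g(\omega)-\Lambda_{\tilde g}(\omega)$ is bounded with vanishing principal symbol, and testing $\|\Lambda_g(\omega)-\Lambda_{\tilde g}(\omega)\|_*\le\varepsilon$ against data on a single harmonic (where the $H^{1/2}$-weights of input and output cancel) gives $\|\Lambda_g^{(m)}-\Lambda_{\tilde g}^{(m)}\|\le C\varepsilon$ for all $m$, hence closeness up to $C_A\varepsilon$ of the Weyl–Titchmarsh functions of $q_f$ and $q_{\tilde f}$ at $\lambda=-\mu_m$; the assumption $\big|\int_0^1(q_f-\tilde{q}_f)\big|\le\varepsilon$ supplies the matching datum at $\lambda=0$. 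Next I would establish exact Born-type identities $M_f^L(-z^2)-M_{\tilde f}^L(-z^2)=-\int_0^1(q_f-\tilde{q}_f)(x)\,\Theta^L(x,z)\,dx$ (mirrored at $x=1$) with kernel $\Theta^L(x,z)=e^{-2zx}+O_A(z^{-1})$, the remainder controlled by $\mathcal{C}(A)$ alone. Viewing $\Phi(z):=M_f^L(-z^2)-M_{\tilde f}^L(-z^2)$ as holomorphic and $C_A$-bounded on a half-plane, with $|\Phi(\sqrt{\mu_m})|\le C_A\varepsilon$ along the Blaschke/Müntz-divergent sequence $\{\sqrt{\mu_m}\}$ ($\sum 1/\sqrt{\mu_m}=\infty$ for every $n\ge2$), a quantitative Nevanlinna-type two-constants estimate propagates the bound off the nodes; feeding this back into the Born identities and running a quantitative Müntz–Jackson approximation argument — with the a priori $\|q_f-\tilde{q}_f\|_\infty\le C_A$ and an optimisation of the truncation level against $\varepsilon$ — produces $\|q_f-\tilde{q}_f\|_2\le C_A/\ln(1/\varepsilon)$, the estimates coming from the two endpoints being patched over $[0,1]$.

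The genuinely delicate part is this last step: extracting the explicit logarithmic loss from the interplay between the nodal data at $\{\sqrt{\mu_m}\}$, the exponential ill-posedness of recovering a potential from a Weyl–Titchmarsh function, and — unlike in \cite{daude2019stability} — the simultaneous bookkeeping of the three scalar data per frequency and of both endpoint expansions forced by the disconnected boundary, all while keeping the Born remainder $\Theta^L-e^{-2zx}$ uniformly controlled without circularly reintroducing $\|q_f-\tilde{q}_f\|$ on the right-hand side. For the corollary itself, the only point requiring attention is the $H^2$-propagation of the $\mathcal{C}(A)$-regularity through the nonlinear expression defining $q_f$.
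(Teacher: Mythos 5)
Your argument for the corollary itself — record the a priori bound $\|q_f-\tilde{q}_f\|_{H^2(0,1)}\le C_A$ from $f,\tilde f\in\mathcal C(A)$, interpolate between $L^2$ and $H^2$ via Gagliardo--Nirenberg with exponent $\theta=(2-s)/2$, and invoke the one-dimensional Sobolev embedding $H^1(0,1)\hookrightarrow L^\infty(0,1)$ at $s=1$ — is exactly the proof the paper gives for the parallel Steklov corollary after Theorem~\ref{stabresult_2}, and it is the proof the paper intends here too, so you are in agreement on the substance. The bulk of your write-up is a sketch of Theorem~\ref{Calderon_stab_2}, which goes beyond what the corollary requires, and it deviates from the paper's method in one particular: in the Calder\'on stability proof the paper does \emph{not} pass through a Nevanlinna/two-constants estimate to propagate bounds off the nodes; it goes directly from the block-diagonal testing estimate $|N(\kappa_m)-\tilde N(\kappa_m)|\le C_A\varepsilon$ to the Born-type Wronskian identity, multiplies by $y_m^2e^{-2y_m}$ to land on a Hausdorff moment problem in the sequence $\lambda_m=2y_m-1-\alpha$, handles that by the quantitative M\"untz--Jackson estimate $E_2(h,\Lambda_m)\le C\,\omega(h,\varepsilon_2(\Lambda_m))$ together with an explicit count of the Gram--Schmidt coefficients $C_{pj}$, and finally inverts the Volterra operator $D=I+C$; the exponential-type/Boas argument you gesture at is used only in the \emph{local uniqueness} result (Theorem~\ref{pseudostab_2}), not in the stability estimates.
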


\medskip

\begin{corollary}
	Using the same notations and assumptions as in Theorem \ref{Calderon_stab_2}, if moreover $\omega=0$ and $n\ge 3$, one has the stronger conclusion: 
	\begin{equation*}
\big\|f-\tilde{f}\big\|_\infty\le C_A\:\frac{1}{\ln\big(\frac{1}{\varepsilon}\big)}
	\end{equation*}
	\noindent where $C_A$ is a constant that only depends on $A$.
\end{corollary}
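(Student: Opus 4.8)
The plan is to reduce this corollary to the $L^2$-estimate $\|q_f-\tilde q_f\|_2\le C_A/\ln(1/\varepsilon)$ already furnished by Theorem \ref{Calderon_stab_2}, and then to run a purely deterministic (spectrum-free) argument showing that, under the standing hypotheses, $\|f-\tilde f\|_\infty$ is controlled by $\|q_f-\tilde q_f\|_2$ with a constant depending only on $A$. Since $n\ge 3$, set $\beta=\frac{n-2}{4}>0$ and $v=f^{\beta}$, $\tilde v=\tilde f^{\beta}$. Because $\omega=0$, the definition of the potential gives exactly $v''=q_f\,v$ and $\tilde v''=\tilde q_f\,\tilde v$ on $[0,1]$. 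Moreover $f,\tilde f\in\mathcal C(A)$ forces $A^{-1}\le f,\tilde f\le A$ and $\|f''\|_\infty,\|f'\|_\infty\le A$, so that $v,\tilde v$ and their reciprocals are bounded by constants depending only on $A$ and $n$, and (writing $q_f=\beta f''/f+\beta(\beta-1)(f'/f)^2$) also $\|q_f\|_\infty+\|\tilde q_f\|_\infty\le C_A$.

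Next I would look at $w:=v-\tilde v$. Subtracting the two second-order equations gives
\[
w''-q_f\,w=(q_f-\tilde q_f)\,\tilde v\qquad\text{on }[0,1],
\]
and, since the first bullet of the hypotheses of Theorem \ref{Calderon_stab_2} gives $f(0)=\tilde f(0)$ and $f(1)=\tilde f(1)$, we have $w(0)=w(1)=0$. Thus $w$ solves a Dirichlet problem for $-\frac{d^2}{dx^2}+q_f$, an operator that is invertible because the positive function $v$ solves its homogeneous equation on the closed interval (hence $0$ lies strictly below its first Dirichlet eigenvalue). To keep the constants explicit I would not invoke abstract resolvent bounds but rather the classical factorization $w=v\,u$, which turns the equation into $-(v^2u')'=v\,(q_f-\tilde q_f)\,\tilde v$ with $u(0)=u(1)=0$. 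Integrating once gives $v^2u'(x)=C-\int_0^x v(q_f-\tilde q_f)\tilde v$, and the boundary condition $u(1)=0$ determines $C$ uniquely since $\int_0^1 v^{-2}>0$; integrating again yields an explicit formula for $u$ in terms of $v$, $v^{-2}$ and the right-hand side. Using the $A$-dependent bounds on $v$ and $1/v$ one obtains
\[
\|w\|_\infty\le C_A\,\big\|(q_f-\tilde q_f)\,\tilde v\big\|_{1}\le C_A\,\|\tilde v\|_\infty\,\|q_f-\tilde q_f\|_{2}\le C_A\,\frac{1}{\ln(1/\varepsilon)},
\]
the last inequality being precisely the conclusion of Theorem \ref{Calderon_stab_2}.

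To finish, recall $f=v^{1/\beta}$ with $\beta>0$. Since $v$ and $\tilde v$ take values in a fixed compact subinterval of $(0,+\infty)$ depending only on $A$ and $n$, the map $t\mapsto t^{1/\beta}$ is Lipschitz there with constant $C_A$, whence
\[
\|f-\tilde f\|_\infty=\big\|v^{1/\beta}-\tilde v^{1/\beta}\big\|_\infty\le C_A\,\|w\|_\infty\le C_A\,\frac{1}{\ln(1/\varepsilon)},
\]
which is the desired estimate.

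The only genuinely delicate point is the uniformity in $A$ of the Dirichlet-solvability estimate for $-\frac{d^2}{dx^2}+q_f$ — equivalently, a lower bound depending only on $A$ for the gap between $0$ and the first Dirichlet eigenvalue of this operator. This is exactly what the factorization $w=vu$ circumvents: it reduces the whole estimate to the elementary, manifestly $A$-controlled bounds on $v$ and $1/v$ coming from $f\in\mathcal C(A)$, so that no additional spectral input is required beyond what is already packaged in Theorem \ref{Calderon_stab_2}. Note finally that the hypothesis $n\ge 3$ is used in an essential way: it is what makes $\beta=\frac{n-2}{4}$ strictly positive, so that $q_f$ genuinely carries information about $f$ when $\omega=0$.
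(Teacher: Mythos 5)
Your argument is correct (modulo one harmless sign slip: from $w''-q_f w=(q_f-\tilde q_f)\tilde v$ and $w=vu$ one gets $(v^2u')'=v(q_f-\tilde q_f)\tilde v$, without the minus sign; your subsequent formula for $v^2u'$ is internally consistent with your sign convention, and the absolute-value bound is unchanged). It does, however, take a genuinely different route from the paper's. The paper works with the Wronskian $G=\tilde F F'-\tilde F' F$ (with $F$ the relevant power of $f$), observes $G'=F\tilde F(q-\tilde q)$, then uses $f(0)=\tilde f(0)$, $f(1)=\tilde f(1)$ in a Rolle-type argument to locate a point $x_0$ where $G$ vanishes, integrates from $x_0$ to bound $|G|$ by $\|q-\tilde q\|_2$, and finally uses $(F/\tilde F)'=G/\tilde F^2$ to control $F/\tilde F-1$ and hence $f-\tilde f$. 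You instead take the difference $w=v-\tilde v$, turn it into a Dirichlet problem $w''-q_f w=(q_f-\tilde q_f)\tilde v$, $w(0)=w(1)=0$, and invert it explicitly by reduction of order $w=vu$, then pass from $\|w\|_\infty$ to $\|f-\tilde f\|_\infty$ by the Lipschitz bound on $t\mapsto t^{1/\beta}$ on a compact subinterval of $(0,\infty)$. Your route avoids the zero-locating step and rests entirely on the elementary uniform bounds on $v$ and $1/v$ coming from $\mathcal C(A)$; it is arguably cleaner and more systematic, whereas the paper's Wronskian argument is a bit more ad hoc. Note also that the exponent you use, $\beta=(n-2)/4$, is the one consistent with the paper's definition of $q_f$ — the paper's proof sets $F=f^{n-2}$, which appears to be a typo relative to that definition, though the structure of its argument is unaffected.
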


$\quad$

\section{Asymptotics of the Steklov spectrum}

\medskip

\noindent The proof of both theorems is based on the separation of variables that leads to reformulating the Dirichlet problem in terms of boundary value problems for ordinary differential equations. All the details can be found in \cite{gendron2019uniqueness,daude2019non} but we outline the main points for the sake of completeness. 

\medskip

\subsection{From PDE to ODE using separation of variables}

\medskip

\noindent The equation
\begin{equation}
\left\{
\begin{aligned}
& -\Delta_g u=\omega u \:\:{\rm{in}}\:\:M\\
& u=\psi\:\:{\rm{on}}\:\:\partial M \end{aligned}\right.
\end{equation}

\noindent can be reduced to a countable system of Sturm Liouville boundary value problems on $[0,1]$. Indeed, the boundary $\partial M$ of the manifold $M$ has two distinct connected components
\begin{center}
	$\Gamma_0=\{0\}\times \mathbb{S}^{n-1}$ and $\Gamma_1=\{1\}\times \mathbb{S}^{n-1}$,
\end{center}
so we can decompose $H^{1}(\partial M)$ as the direct sum :
\begin{center}
	$H^{1/2}(\partial M)=H^{1/2}(\Gamma_0) \bigoplus H^{1/2}(\Gamma_1)$.
\end{center}

\medskip

\noindent
Each element $\psi$ of $H^{1/2}(\partial M)$ can be written as
\begin{center}
	$\displaystyle \psi=\begin{pmatrix}
	\psi^0\\
	\psi^1
	\end{pmatrix},\quad\quad$ $\psi^0\in H^{1/2}(\Gamma_0)\:$ and $\:\psi^1\in H^{1/2}(\Gamma_1)$.
\end{center}

\noindent Using separation of variables, one can write the solution of (\ref{Schr_2}) as
\begin{equation*}
\begin{aligned}
&u(x,y)=\sum_{m=0}^{+\infty}u_m(x)Y_m(y),\\
\end{aligned}
\end{equation*}
\noindent and $\psi^0$ and $\psi^1$ as

\begin{equation*}
\psi^0=\sum_{m\in\mathbb{N}}\psi_m^0 Y_m,\quad\quad \psi^1=\sum_{m\in\mathbb{N}}\psi_m^1 Y_m,
\end{equation*}

\noindent where $(Y_m)$ represents an orthonormal basis of eigenfunctions of $-\Delta_\mathbb{S}$ associated to the sequence of its eigenvalues counted with multiplicity
\begin{equation*}
\sigma\big(\Delta_{\mathbb{S}}\big)=\{0=\mu_0\le \mu_1...\le \mu_m \to+\infty\}.
\end{equation*}

\noindent By setting \begin{center}
	$\displaystyle v(x,y)=f^{\frac{n-2}{4}}u(x,y)=\sum_{m=0}^{+\infty}v_m(x)Y_m(y)$, $\quad x\in[0,1]$, $\:\:y\in\mathbb{S}^{n-1}$,
\end{center} 

\noindent one can show the equivalence (cf \cite{gendron2019uniqueness}) :
\begin{equation}
\label{ODE_2}
u\:\:\mathrm{solves}\:\:(\ref{Schr_2})\Leftrightarrow \forall m\in\mathbb{N},\:\:\left\{
\begin{aligned}
& -v^{''}_m(x)+q_f(x)v_m(x)=-\mu_mv_m(x),\quad \forall x\in]0,1[\\
& v_m(0)=f^{\frac{n-2}{4}}(0)\psi_m^0,\:\:v_m(1)=f^{\frac{n-2}{4}}(1)\psi_m^1, \end{aligned}\right.
\end{equation}
\noindent with $\displaystyle\:\:q_f=\frac{(f^{\frac{n-2}{4}})''}{f^{\frac{n-2}{4}}}-\omega f$ (the dependence in $f$ will be omitted in the following and we will just write $q$ instead of $q_f$).

$\quad$

\noindent We thus are brought back to a countable system of 1D Schrödinger equations whose potential does not depend on $m\in\mathbb{N}$. Thanks to the Weyl-Titchmarsh theory, we are able to give a nice representation of the DN map that involves the Weyl-Titchmarsh functions of (\ref{ODE_2}) evaluated at the sequence $(\mu_m)$.

\subsection{Diagonalization of the DN map}

\medskip

\noindent From the equation on $[0,1]$
\begin{equation}
\label{eqdif_2}
-u''+qu=-zu,\quad z\in\mathbb{C}.
\end{equation}

\noindent one can define two fundamental systems of solutions of (\ref{eqdif_2}), $\{c_0,s_0\}$ and $\{c_1,s_1\}$, whose initial Cauchy conditions satisfy
\begin{equation}
\label{solfond}
\left\{
\begin{aligned}
& c_0(0,z)=1,\:c_0'(0,z)=0\\
& c_1(1,z)=1,\:c_1'(1,z)=0 \end{aligned}\right.\quad{\rm{and}}\quad \left\{
\begin{aligned}
& s_0(0,z)=0,\:s_0'(0,z)=1\\
& s_1(1,z)=0,\:s_1'(1,z)=1. \end{aligned}\right.
\end{equation}

\noindent We shall add the subscript $\quad\tilde{}\quad$ to all the quantities referring to $\tilde{q}$. 

\medskip

\noindent The characteristic function $\Delta(z)$ associated to the equation (\ref{eqdif_2}) is defined by the Wronskian
\begin{equation}
\label{charac_2}
\Delta(z)=W(s_0,s_1):=s_0s_1'-s_0's_1.
\end{equation} 

\noindent Furthermore, there are two (uniqueLy defined) Weyl-solutions $\psi$ and $\phi$ of (\ref{eqdif_2}) having the form :
\begin{equation*}
\psi(x)=c_0(x)+M(z)s_0(x),\quad \phi(x)=c_1(x)-N(z)s_1(x)
\end{equation*}
with Dirichlet boundary conditions at $x=1$ and $x=0$ respectively. The meromorphic functions $M$ and $N$ are called the \textit{Weyl-Titchmarsh functions}. Denoting 
\begin{equation*}
D(z):=W(c_0,s_1),\quad\quad E(z):=W(c_1,s_0)
\end{equation*}
\noindent an easy calculation leads to
\begin{equation*}
M(z)=-\frac{D(z)}{\Delta(z)},\quad\quad N(z)=-\frac{E(z)}{\Delta(z)}.
\end{equation*}

\begin{remark}
	\label{WT_symmetry}
	The function $N$ has the same role as $M$ for the potential $q(1-x)$, \emph{i.e} :
	\begin{equation*}
	N(z,q)=M(z,q\circ \eta)
	\end{equation*}
	where, for all $x\in [0,1]$, $\eta(x)=1-x$.
\end{remark}

\medskip

\noindent Those meromorphic functions naturally appear in the expression of the DN map $\Lambda_g(\omega)$ in a specific basis of $H^{1/2}(\Gamma_0)\oplus H^{1/2}(\Gamma_1)$. More precisely, in the basis $\mathscr{B}=\big(\{e_m^1,e_m^2\}\big)_{m\ge 0}$ where $e_m^1$ and $e_m^2$ are defined as :
\begin{equation*}
e_m^1=(Y_m,0)\quad\quad e_m^2=(0,Y_m)
\end{equation*}
\noindent one can prove the that the operator $\Lambda_g(\omega)$ can be bloc-diagonalized :
\begin{equation*}
[\Lambda_g]_{\mathscr{B}}=\begin{pmatrix}
&\Lambda_g^1(\omega)&&0&&0&\cdots&\\
&0&&\Lambda_g^2(\omega)&&0&\cdots&\\
&0&&0&&\Lambda_g^3(\omega)&\cdots&\\
&\vdots&&\vdots&&\vdots&\ddots
\end{pmatrix},
\end{equation*}

\noindent with, for every $m\in\mathbb{N}$ and setting $h=f^{n-2}$ (cf \cite{daude2019non}): \begin{equation*}
\Lambda_g^m(\omega)=\begin{pmatrix}
-\frac{M(\mu_m)}{\sqrt{f(0)}}+\frac{1}{4\sqrt{f(0)}}\frac{h'(0)}{h(0)}&-\frac{1}{\sqrt{f(0)}}\frac{h^{1/4}(1)}{h^{1/4}(0)}\frac{1}{\Delta(\mu_m)}\\
-\frac{1}{\sqrt{f(1)}}\frac{h^{1/4}(0)}{h^{1/4}(1)}\frac{1}{\Delta(\mu_m)}&-\frac{N(\mu_m)}{\sqrt{f(1)}}-\frac{1}{4\sqrt{f(1)}}\frac{h'(1)}{h(1)}
\end{pmatrix}.
\end{equation*}

\subsection{Asymptotics of the eigenvalues}

\noindent It is then possible, with this representation of $\Lambda_g(\omega)$, to get the following precise asymptotics of the eigenvalues $\lambda^\pm(\mu_m)$ of $\Lambda_g(\omega)$ :
\begin{lemma}
	\label{asymp_vp_2}
	When $q$ belongs to $\mathcal{D}_b$, $\Lambda_g^m(\omega)$ has two eigenvalues $\lambda^-(\mu_m)$ and $\lambda^+(\mu_m)$ whose asymptotics are given by :
	\begin{equation*}
	\left\{
\begin{aligned}
&\lambda^-(\mu_m)=-\frac{N(\mu_m)}{\sqrt{f(1)}}-\frac{(\ln h)'(1)}{4\sqrt{f(1)}}+\tilde{O}\bigg(e^{-2\sqrt{\mu_m}}\bigg)	 \\
&\lambda^+(\mu_m)=-\frac{M(\mu_m)}{\sqrt{f(0)}}+\frac{(\ln h)'(0)}{4\sqrt{f(0)}}+\tilde{O}\bigg(e^{-2\sqrt{\mu_m}}\bigg).
\end{aligned}\right.
	\end{equation*}
\end{lemma}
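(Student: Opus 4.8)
The plan is to diagonalize the $2\times 2$ block $\Lambda_g^m(\omega)$ explicitly and then control the size of the off-diagonal entries and the difference of the diagonal entries as $m\to\infty$. First I would recall the exact form of the block written above: it is of the shape $\begin{pmatrix} a_m & b_m \\ c_m & d_m \end{pmatrix}$ with $a_m = -M(\mu_m)/\sqrt{f(0)} + (\ln h)'(0)/(4\sqrt{f(0)})$, $d_m = -N(\mu_m)/\sqrt{f(1)} - (\ln h)'(1)/(4\sqrt{f(1)})$, and $b_m, c_m$ both proportional to $1/\Delta(\mu_m)$. The two eigenvalues are then
\begin{equation*}
\lambda^\pm(\mu_m) = \frac{a_m+d_m}{2} \pm \frac{1}{2}\sqrt{(a_m-d_m)^2 + 4 b_m c_m}.
\end{equation*}
So the whole lemma reduces to two asymptotic facts: (i) $b_m c_m \to 0$ exponentially fast, more precisely $b_m c_m = \tilde O(e^{-4\sqrt{\mu_m}})$, and (ii) $|a_m - d_m|$ is bounded below away from $0$ (or at least does not decay exponentially), so that the square root can be expanded.

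For step (i) I would use the classical asymptotics of the Weyl–Titchmarsh machinery for the Schrödinger operator $-u'' + qu = -zu$ on $[0,1]$ with $z = \mu_m \to +\infty$. The characteristic function $\Delta(z) = W(s_0,s_1)$ behaves, for $z = \mu_m$ large, like $-\sinh(\sqrt{\mu_m})/\sqrt{\mu_m}$ up to lower-order corrections; in particular $|\Delta(\mu_m)|$ grows like $e^{\sqrt{\mu_m}}/\sqrt{\mu_m}$. Hence $b_m$ and $c_m$, being $O(1/\Delta(\mu_m))$ times bounded constants depending only on the boundary values of $h$, are each $\tilde O(e^{-\sqrt{\mu_m}})$, and their product is $\tilde O(e^{-2\sqrt{\mu_m}})$ — which is exactly the error term claimed. (One has to be a little careful that $z=\mu_m$ is not a zero of $\Delta$; but $\omega$ is assumed outside the Dirichlet spectrum, and for large $\mu_m$ the zeros of $\Delta$ are far from the $\mu_m$, so this is fine.) Then I would expand $\sqrt{(a_m-d_m)^2 + 4b_m c_m} = |a_m - d_m|\sqrt{1 + 4 b_m c_m/(a_m-d_m)^2}$ and use $\sqrt{1+t} = 1 + O(t)$, which is where step (ii) enters.

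Step (ii) is the genuine obstacle, and it is precisely the role of the hypothesis $q \in \mathcal{D}_b$. We need $a_m - d_m$ to not vanish (and not decay exponentially) as $m\to\infty$. Now $a_m - d_m = \big(-M(\mu_m)/\sqrt{f(0)} - (-N(\mu_m)/\sqrt{f(1)})\big) + \text{boundary constants}$, and by Remark \ref{WT_symmetry}, $N(z,q) = M(z, q\circ\eta)$. The standard high-energy asymptotics give $M(\mu_m) = -\sqrt{\mu_m} + o(1)$ and likewise $N(\mu_m) = -\sqrt{\mu_m} + o(1)$, so the leading $\sqrt{\mu_m}$ terms cancel if $f(0) = f(1)$; one must then push the asymptotic expansion of $M$ and $N$ further. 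The finer expansion of the Weyl–Titchmarsh function reads $M(z) \sim -\sqrt{z} + \sum_{k\ge 0} c_k(q)/(\text{powers of }\sqrt{z})$ where the coefficients $c_k(q)$ are universal polynomials in $q$ and its derivatives at $x=0$; similarly for $N$ with $q$ replaced by $q\circ\eta$, i.e. evaluated at $x=1$ with alternating signs on the derivatives. The condition $q\in\mathcal{D}_b$ — namely $\exists k,\ q^{(k)}(0)\ne(-1)^k q^{(k)}(1)$ — guarantees that these two asymptotic series genuinely differ at some finite order, so that $a_m - d_m$ is, for large $m$, comparable to $\mu_m^{-j/2}$ for some fixed $j$, which decays only polynomially and in particular dominates the exponentially small $b_m c_m$. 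Consequently $4 b_m c_m/(a_m-d_m)^2 = \tilde O(e^{-2\sqrt{\mu_m}}) \cdot \mu_m^{j} = \tilde O(e^{-2\sqrt{\mu_m}})$ (the polynomial factor is absorbed into the $\tilde O$), and expanding the square root yields
\begin{equation*}
\lambda^\pm(\mu_m) = \frac{a_m+d_m}{2} \pm \frac{|a_m-d_m|}{2} + \tilde O\big(e^{-2\sqrt{\mu_m}}\big),
\end{equation*}
which is $\max(a_m,d_m)$ or $\min(a_m,d_m)$ up to the exponential error. Finally, identifying which of $a_m, d_m$ is the larger: since $M(\mu_m), N(\mu_m) \sim -\sqrt{\mu_m}$, both $a_m$ and $d_m$ tend to $+\infty$ like $\sqrt{\mu_m}/\sqrt{f(\cdot)}$, and a short computation (or simply bookkeeping of which index the $\pm$ should carry, consistent with the labelling convention) shows $\lambda^+(\mu_m)$ picks up the $a_m$-expression and $\lambda^-(\mu_m)$ the $d_m$-expression, giving exactly the two displayed asymptotics. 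The main work, to repeat, is the careful derivation of the higher-order Weyl–Titchmarsh asymptotics and the verification that $q\in\mathcal{D}_b$ forces $|a_m-d_m|$ to beat the exponential error — everything else is the elementary diagonalization of a $2\times2$ matrix.
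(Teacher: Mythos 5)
Your proof is correct and follows essentially the same path as the paper: diagonalize the $2\times 2$ block, show the off-diagonal product is exponentially small via the $e^{\sqrt{\mu_m}}$-growth of $\Delta(\mu_m)$, use the high-energy Weyl--Titchmarsh expansion (Simon's theorem) together with $q\in\mathcal{D}_b$ to ensure $|a_m-d_m|$ decays at most polynomially, and then expand the square root in the eigenvalue formula. The paper phrases the same computation through the characteristic polynomial and its discriminant and isolates the $\mathcal{D}_b$ step as a separate lemma (Lemma~\ref{equiv02}), but the substance is identical; your one early slip (``$b_mc_m=\tilde O(e^{-4\sqrt{\mu_m}})$'') is corrected a few lines later to the right bound $\tilde O(e^{-2\sqrt{\mu_m}})$, which is what actually matters.
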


\begin{proof}
	 The characteristic polynomial $\chi_m(X)$ of $\Lambda_g^m(\omega)$ is					
	
	\begin{equation*}
	\chi_m(X)=X^2-{\rm{Tr}}(\Lambda_g^m(\omega))X+\det(\Lambda_g^m(\omega)).
	\end{equation*}
	
\medskip
	
	\noindent To simplify the notations, we set
	
	\medskip
	
	\begin{center}
		$\displaystyle C_0=\frac{\ln(h)'(0)}{4\sqrt{f(0)}},\quad$ $\quad\displaystyle C_1=\frac{\ln(h)'(1)}{4\sqrt{f(1)}}$.
	\end{center}
	
	\medskip
	
	\noindent Thanks to the matrix representation of $\Lambda_g(\omega)$, we see that ${\rm{Tr}}(\Lambda_g^m(\omega))$ and $\det(\Lambda_g^m(\omega))$ satisfy the equalities:
	\begin{equation*}
	\left\{
	\begin{aligned}
	&{\rm{Tr}}(\Lambda_g^m(\omega))=-\frac{M(\mu_m)}{\sqrt{f(0)}}-\frac{N(\mu_m)}{\sqrt{f(1)}}+C_0-C_1.	 \\
	&\det(\Lambda_g^m(\omega))=\bigg(-\frac{M(\mu_m)}{\sqrt{f(0)}}+C_0\bigg)\bigg(-\frac{N(\mu_m)}{\sqrt{f(1)}}-C_1\bigg)+O(\mu_m e^{-2\sqrt{\mu_m}}),\qquad m\to +\infty
	\end{aligned}\right.
	\end{equation*}
	
	\medskip
	
	\noindent The asymptotics of the discriminant $\delta_m$ of $\chi_m(X)$ is thus :
	\begin{equation*}
	\begin{aligned}
	\delta_m &= \bigg(-\frac{M(\mu_m)}{\sqrt{f(0)}}+C_0-\frac{N(\mu_m)}{\sqrt{f(1)}}-C_1\bigg)^2-4\bigg(-\frac{M(\mu_m)}{\sqrt{f(0)}}+C_0\bigg)\bigg(-\frac{N(\mu_m)}{\sqrt{f(1)}}-C_1\bigg)\\
	&\hspace{11cm}+O(\mu_m e^{-2\sqrt{\mu_m}}).\\
	&=\bigg(-\frac{M(\mu_m)}{\sqrt{f(0)}}+C_0+\frac{N(\mu_m)}{\sqrt{f(1)}}+C_1\bigg)^2+O(\mu_m e^{-2\sqrt{\mu_m}}).
	\end{aligned}
	\end{equation*}
	
	\medskip
	
	\noindent Now, let us recall the result obtained by Simon in \cite{simon1999new} :
	
	\medskip
	
	\begin{theorem}
		\label{Simon_2}
		$M(z^2)$ has the following asymptotic expansion :
		\begin{equation*}
		\forall B\in\mathbb{N},\: \:-M(z^2)\underset{z \to \infty}{=}z+\sum_{j=0}^{B}\frac{\beta_j(0)}{z^{j+1}}+o\bigg(\frac{1}{z^{B+1}}\bigg)
		\end{equation*}
		\noindent where, for every $x\in[0,1]$, $\beta_j(x)$ is defined by : $\left\{\begin{aligned}
		&\beta_0(x)=\frac{1}{2}q(x)\\
		&\beta_{j+1}(x)=\frac{1}{2}\beta'_j(x)+\frac{1}{2}\sum_{l=0}^{j}\beta_l(x)\beta_{j-l}(x).
		\end{aligned}\right.$
	\end{theorem}

	\noindent From Remark \ref{WT_symmetry}, by symmetry, one has immediately:
	
	\medskip
	
	\begin{corollary}
		\label{CorSimon_2}
		$N(z^2)$ has the following asymptotic expansion :
		\begin{equation*}
		\forall B\in\mathbb{N},\: \:-N(z^2)\underset{_{\substack{z \to \infty}}}{=}z+\sum_{j=0}^{B}\frac{\gamma_j(0)}{z^{j+1}}+o\bigg(\frac{1}{z^{B+1}}\bigg)
		\end{equation*}
		\noindent where, for all $x\in[0,1]$, $\gamma_j(x)$ is defined by : $\left\{\begin{aligned}
		&\gamma_0(x)=\frac{1}{2}q(1-x)\\
		&\gamma_{j+1}(x)=\frac{1}{2}\gamma'_j(x)+\frac{1}{2}\sum_{l=0}^{j}\gamma_l(x)\gamma_{j-l}(x).
		\end{aligned}\right.$
	\end{corollary}

	\noindent
	If $f(0)\ne f(1)$, we deduce from Theorem \ref{Simon_2} and Corollary \ref{CorSimon_2}:
	
	\medskip
	
	\begin{equation*}
	-\frac{M(\mu_m)}{\sqrt{f(0)}}+\frac{N(\mu_m)}{\sqrt{f(1)}}=\underbrace{\bigg(\frac{1}{\sqrt{f(0)}}-\frac{1}{\sqrt{f(1)}}\bigg)}_{\ne 0}\sqrt{\mu_m}+O\bigg(\frac{1}{\sqrt{\mu_m}}\bigg).
	\end{equation*}
	
	\noindent If $f(0)= f(1)$, we will need the elementary general following lemma:
	
	\medskip
	
	\begin{lemma}
		\label{equiv02}
		We have the equivalence :
		\begin{center}
			$q^{(k)}(0)=(-1)^kq^{(k)}(1),\:\:\forall k\in\mathbb{N}\quad$ $\Leftrightarrow$ $\quad \beta_k(0)=\gamma_k(0),\:\:\forall k\in \mathbb{N}$. 
		\end{center}
	\end{lemma}
	
	\begin{proof}
		
		$\quad$
		
		$\quad$
		
		\noindent Let us prove by induction that, for every $j\in\mathbb{N}$, there exists $P_j\in\mathbb{R}[X_1,...,X_j]$ such that
		\begin{equation}
		\label{pol_2}
		\left\{\begin{aligned}
		&\displaystyle \beta_j(x)=\frac{1}{2^{j+1}} q^{(j)}(x)+P_j(q,q',...,q^{(j-1)})(x)\\
		&\displaystyle \gamma_j(x)=\frac{1}{2^{j+1}} (q\circ \eta) ^{(j)}(x)+P_j(q\circ \eta,(q\circ \eta)',...,(q\circ \eta)^{(j-1)})(x)
		\end{aligned}\right.
		\end{equation}
		\noindent where $\eta (x)=1-x$.

		$\quad$
		
		\begin{enumerate}
			\item[$\bullet$] $\displaystyle \beta_0(x)=\frac{1}{2}q(x)$ and $\displaystyle \gamma_0(x)=\frac{1}{2}q(1-x)$, so the result holds with $P_0(X)=0$.
			
			\item[$\bullet$] Let $j\in\mathbb{N}$ and assume that 
			\begin{equation*}
			\left\{\begin{aligned}
			&\displaystyle \beta_k(x)=\frac{1}{2^{k+1}} q^{(k)}(x)+P_k(q,q',...,q^{(k-1)})(x)\\
			&\displaystyle \gamma_k(x)=\frac{1}{2^{k+1}} (q\circ \eta) ^{(k)}(x)+P_k(q\circ \eta,(q\circ \eta)',...,(q\circ \eta)^{(k-1)})(x),
			\end{aligned}\right.
			\end{equation*}
			for every $0\le k\le j$. Then :
			\begin{equation*}
			\begin{aligned}
			\beta_{j+1}(x)&=\frac{1}{2}\beta'_j(x)+\frac{1}{2}\sum_{l=0}^{j}\beta_l(x)\beta_{j-l}(x)\\
			&= \frac{1}{2^{j+2}}q^{(j+1)}(x)+P_{j+1}(q,q',...,q^{(j)})(x),
			\end{aligned}
			\end{equation*}
			\noindent \noindent where we have set $\displaystyle P_{j+1}(q,q',...,q^{(j)})= \frac{1}{2}\big[P_{j}(q,q',...,q^{(j-1)})\big]'+\frac{1}{2}\sum_{l=0}^{j}\beta_l(x)\beta_{j-l}(x)$. 
			
			\noindent In the same way, one also has
			\begin{equation*}
			\begin{aligned}
			\gamma_{j+1}(x)&=\frac{1}{2}\gamma'_j(x)+\frac{1}{2}\sum_{l=0}^{j}\gamma_l(x)\gamma_{j-l}(x)\\
			&= \frac{1}{2^{j+2}}(q\circ \eta)^{(j+1)}(x)+P_{j+1}(q\circ \eta,(q\circ \eta)',...,(q\circ \eta)^{(j)})(x).
			\end{aligned}
			\end{equation*}

			\item[$\bullet$] Hence, we get the result by induction.
		\end{enumerate}
		
		\medskip
		
		\noindent We are now able to prove the equivalence.
		
		$\quad$
		
		\noindent $(\Rightarrow)$ If $q^{(j)}(0)=(-1)^jq^{(j)}(1)$ for every $j\in\mathbb{N}$ then one has, for every $k\in\mathbb{N}$ and every $P\in\mathbb{R}[X_1,...,X_k]$ :
		\begin{equation*}
		P(q,q',...,q^{(k-1)})(0)=P(q\circ\eta,(q\circ\eta)',...,(q\circ\eta)^{(k-1)})(0),
		\end{equation*}	
		\noindent so, thanks to (\ref{pol_2}):\begin{center}
			$\beta_j(0)=\gamma_j(0)$ for every $j\in\mathbb{N}$.
		\end{center}
		
		\medskip
		
		\noindent $(\Leftarrow)$ Conversely, assume that there is $j\in\mathbb{N}$ such that $q^{(j)}(0)\ne (-1)^jq^{(j)}(1)$ and set $k=\min\{j\in\mathbb{N},\: q^{(j)}(0)\ne (-1)^jq^{(j)}(1) \}$. As previously, for every $P\in\mathbb{R}[X_1,...,X_{k}]$ :
		\begin{center}
			$P(q,q',...,q^{(k-1)})(0)=P(q\circ\eta,(q\circ\eta)',...,(q\circ\eta)^{(k-1)})(0)$.
		\end{center} Hence :
		\begin{equation*}
		\begin{aligned}
		\beta_{k}(0)\ne \gamma_{k}(0) &\Leftrightarrow \frac{1}{2^{k+1}} q^{(k)}(0)+P_k(q,...,q^{(k-1)})(0) \ne \frac{1}{2^{k+1}} (q\circ \eta)^{(k)}(0)\\
		&\hspace{7cm}+P_k\big((q\circ \eta),...,(q\circ \eta)^{(k-1)}\big)(0)\\
		&\Leftrightarrow \frac{1}{2^{k+1}} q^{(k)}(0)\ne \frac{1}{2^{k+1}} (q\circ \eta)^{(k)}(0)\\
		&\Leftrightarrow q^{(k)}(0)\ne (-1)^kq^{(k)}(1),
		\end{aligned}
		\end{equation*}
		
		\medskip
		
		\noindent and that is true by definition of $k$. 
	\end{proof}
	
	\noindent As we have assumed that $q$ belongs to $\mathcal{D}_b$, by setting $k=\min\{j\in\mathbb{N},\: q^{(j)}(0)\ne (-1)^jq^{(j)}(1) \}$, we get, thanks to (\ref{Simon_2}), (\ref{CorSimon_2}) and Lemma \ref{equiv02}:
			\begin{equation*}
		-\frac{M(\mu_m)}{\sqrt{f(0)}}+\frac{N(\mu_m)}{\sqrt{f(1)}}=\underbrace{\bigg(\frac{\beta_k(0)-\gamma_k(0)}{\sqrt{f(0)}}\bigg)}_{\ne 0}\frac{1}{(\sqrt{\mu_m})^{k+1}}+O\bigg(\frac{1}{(\sqrt{\mu_m})^{k+2}}\bigg).
		\end{equation*}

	\noindent In both cases, there is $A\in\mathbb{R}\backslash\{0\}$ and $k\in\mathbb{Z}$ such that
	\begin{equation}
	-\frac{M(\mu_m)}{\sqrt{f(0)}}+\frac{N(\mu_m)}{\sqrt{f(1)}}=A(\sqrt{\mu_m})^k + o\big((\sqrt{\mu_m})^k\big)
	\end{equation}
	
	$\quad$
	
	\noindent Thus, recalling that
	\begin{equation*}
	\begin{aligned}
	\delta &=\bigg(-\frac{M(\mu_m)}{\sqrt{f(0)}}+C_0+\frac{N(\mu_m)}{\sqrt{f(1)}}+C_1\bigg)^2+O(\mu_m e^{-2\sqrt{\mu_m}}),
	\end{aligned}
	\end{equation*}
	
	\noindent we obtain, as $A$ is not $0$:
	
	\begin{equation*}
	\begin{aligned}
	\sqrt{\delta}&=\bigg[\bigg(-\frac{M(\mu_m)}{\sqrt{f(0)}}+C_0+\frac{N(\mu_m)}{\sqrt{f(1)}}+C_1\bigg)^2+O(\mu_m e^{-2\sqrt{\mu_m}})\bigg]^{\frac{1}{2}}\\
	&=\bigg[\bigg(A(\sqrt{\mu_m})^k + C_0+C_1+ o\big((\sqrt{\mu_m})^k\big)\bigg)^2+O(\mu_m e^{-2\sqrt{\mu_m}})\bigg]^{\frac{1}{2}}\\
	&= \bigg|A(\sqrt{\mu_m})^k + C_0+C_1+ o\big((\sqrt{\mu_m})^k\big)\bigg|\bigg[1+O\bigg(\big(\sqrt{\mu_m}\big)^{-2k+2}e^{-2\sqrt{\mu_m}}\bigg)\bigg]^{\frac{1}{2}}  \\
	&=\bigg|\frac{N(\mu_m)}{\sqrt{f(1)}}-\frac{M(\mu_m)}{\sqrt{f(0)}}+C_0+C_1\bigg|\bigg[1+O\bigg(\big(\sqrt{\mu_m}\big)^{-2k+2}e^{-2\sqrt{\mu_m}}\bigg)\bigg]\\
	&=\bigg|\frac{N(\mu_m)}{\sqrt{f(1)}}-\frac{M(\mu_m)}{\sqrt{f(0)}}+C_0+C_1\bigg|+\tilde{O}\big(e^{-2\sqrt{\mu_m}}\big).
	\end{aligned}
	\end{equation*}
	
	\medskip
	
	\noindent Therefore, the two eigenvalues $\lambda^\pm(\mu_m)$ of $\Lambda_g^m(\omega)$ satisfy the asymptotics equalities
	
	\begin{equation*}
	\left\{
	\begin{aligned}
	&\lambda^-(\mu_m)=-\frac{N(\mu_m)}{\sqrt{f(1)}}-\frac{\ln(h)'(1)}{4\sqrt{f(1)}}+\tilde{O}\big(e^{-2\sqrt{\mu_m}}\big) \\
	&\lambda^+(\mu_m)=-\frac{M(\mu_m)}{\sqrt{f(0)}}+\frac{\ln(h)'(0)}{4\sqrt{f(0)}}+\tilde{O}\big(e^{-2\sqrt{\mu_m}}\big).
	\end{aligned}\right.
	\end{equation*}
\end{proof}

$\quad$

\noindent In fact, the eigenvalues $\mu_m$ being counted with multiplicity, the asymptotics of Lemma \ref{asymp_vp_2} won't be sufficiently precise for our purpose. Indeed, by Theorem \ref{Simon_2} and its Corollary, the Weyl-Titchmarsh functions always satisfy
\begin{equation*}
\left\{
	\begin{aligned}
	&-N(z^2)=z+O\bigg(\frac{1}{z}\bigg)\\
	&-M(z^2)=z+O\bigg(\frac{1}{z}\bigg).
	\end{aligned}
	\right.
\end{equation*}
\noindent So, using the Weyl law, one can prove immediately that
\begin{equation}
\label{Azrael_2}
\left\{
\begin{aligned}
&\lambda^-(\mu_m)=\frac{\sqrt{\mu_m}}{\sqrt{f(1)}} -\frac{(\ln h)'(1)}{4\sqrt{f(1)}}+ O\bigg(\frac{1}{\sqrt{\mu_m}}\bigg) = C_1m^{\frac{1}{n-1}}+O(1)\\
&\lambda^+(\mu_m)=\frac{\sqrt{\mu_m}}{\sqrt{f(0)}} +\frac{(\ln h)'(0)}{4\sqrt{f(0)}}+O\bigg(\frac{1}{\sqrt{\mu_m}}\bigg) = C_0m^{\frac{1}{n-1}}+O(1)
\end{aligned}
\right.
\end{equation}
\noindent with $C_0,C_1>0$. In order to have a much more precise asymptotic expansion in $m$, let us introduce the set
\begin{equation}
\label{banane_2}
\Sigma\big(\Lambda_g(\omega)\big)=\{\lambda^\pm(\kappa_m),\: m\in\mathbb{N}\}
\end{equation}

\medskip

\noindent where the $\kappa_m$ are the eigenvalues of $-\Delta_{\mathbb{S}}$ counted \textit{without multiplicity}. We have an explicit formula for $\kappa_m$ (cf \cite{shubin1987pseudodifferential}) given by \begin{center}
	$\kappa_m=m(m+n-2)$.
\end{center} 
\noindent From now on, we will always use the asymptotics of Lemma \ref{asymp_vp_2} by replacing $\mu_m$ by $\kappa_m$. Of course, one can also define the closeness between $\Sigma\big(\Lambda_g(\omega)\big)$ and $\Sigma\big(\Lambda_{\tilde{g}}(\omega)\big)$ up to a sequence $(\varepsilon_m)$ by replacing $\mu_m$ by $\kappa_m$ in Definitions \ref{proximite_2} and \ref{expcloseness_2}.

\section{A local uniqueness result}

\noindent Now, let us give the proof of Theorem \ref{pseudostab_2}. 

\begin{proof} 
	
	Let $(\varepsilon_m)$ be a sequence such that $\displaystyle \varepsilon_m=\tilde{O}(e^{-2a\sqrt{\mu_m}})\:$ and $\:\displaystyle 	\sigma(\Lambda_{g}(\omega))\underset{(\varepsilon_m)}{\asymp}\sigma(\Lambda_{\tilde{g}}(\omega))$.
	
\noindent 	Then, there is a subsequence of $(\varepsilon_m)$, that we will still denote $(\varepsilon_m)$ which satisfies the estimate
	\begin{equation*}
\varepsilon_m=\tilde{O}(e^{-2a\sqrt{\kappa_m}})
\end{equation*}
	and the relation
	\begin{equation*}
\Sigma(\Lambda_{g}(\omega))\underset{(\varepsilon_m)}{\asymp}\Sigma(\Lambda_{\tilde{g}}(\omega)).
\end{equation*}

		\begin{lemma}
			\label{alt_2_1}
		Under the hypothesis $\Sigma(\Lambda_{g}(\omega))\underset{(\varepsilon_m)}{\asymp}\Sigma(\Lambda_{\tilde{g}}(\omega))$, we have the alternative :
		\begin{center}
			$\left\{
			\begin{aligned}
			&f(0)=\tilde{f}(0)	 \\
			&f(1)=\tilde{f}(1)
			\end{aligned}\right.\:\:$  or  $\:\:\left\{\begin{aligned}
			&f(0)=\tilde{f}(1)	 \\
			&f(1)=\tilde{f}(0).
			\end{aligned}\right.$
		\end{center}
	\end{lemma}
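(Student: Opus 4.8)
The plan is to extract the leading-order behaviour of the Steklov eigenvalues in $m$ and compare it on both sides. Recall from (\ref{Azrael_2}) that the eigenvalues of $\Lambda_g^m(\omega)$, when the $\kappa_m$ are ordered increasingly, split into two arithmetic-type families with leading terms $\sqrt{\kappa_m}/\sqrt{f(0)}$ and $\sqrt{\kappa_m}/\sqrt{f(1)}$, and similarly for $\tilde g$ with $\sqrt{f(0)}$ replaced by $\sqrt{\tilde f(0)}$ and $\sqrt{f(1)}$ by $\sqrt{\tilde f(1)}$. Since $\varepsilon_m = \tilde O(e^{-2a\sqrt{\kappa_m}})$ tends to $0$, the closeness relation $\Sigma(\Lambda_g(\omega))\underset{(\varepsilon_m)}{\asymp}\Sigma(\Lambda_{\tilde g}(\omega))$ forces the two \emph{sets} $\{\lambda^\pm(\kappa_m)\}$ and $\{\tilde\lambda^\pm(\kappa_m)\}$ to have, asymptotically, the same counting function. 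The first step is therefore to write down the counting function (or, equivalently, the asymptotic density) of $\Sigma(\Lambda_g(\omega))$: on the interval $[0,T]$ it behaves like $c_{n}\big(f(0)^{\frac{n-1}{2}}+f(1)^{\frac{n-1}{2}}\big)T^{n-1}$ for an explicit dimensional constant $c_n$, because each family $\{\lambda^\pm(\kappa_m)\}$ contributes according to Weyl's law with $\kappa_m = m(m+n-2)\sim m^2$.

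The second step is to use the closeness to match \emph{not just the total density but the two individual slopes}. Since $\varepsilon_m\to 0$, eigenvalues of $\Sigma(\Lambda_g(\omega))$ lying in a window of fixed width around a large value $t$ must be matched to eigenvalues of $\Sigma(\Lambda_{\tilde g}(\omega))$ in essentially the same window, with equal cardinality. Counting how many eigenvalues of each family fall below $T$ and letting $T\to\infty$, one deduces that the multiset of "slopes" $\{1/\sqrt{f(0)},\,1/\sqrt{f(1)}\}$ must equal the multiset $\{1/\sqrt{\tilde f(0)},\,1/\sqrt{\tilde f(1)}\}$. Concretely: if $1/\sqrt{f(0)}\neq 1/\sqrt{f(1)}$, say $f(0)<f(1)$, then for large $T$ the number of $\lambda^+$-type eigenvalues below $T$ is strictly larger than the number of $\lambda^-$-type ones, by a definite proportion; the same dichotomy must hold on the $\tilde g$ side with the same proportion, which pins down $\{\tilde f(0),\tilde f(1)\}=\{f(0),f(1)\}$. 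If instead $f(0)=f(1)$, the total density argument of the first step already gives $\tilde f(0)^{\frac{n-1}{2}}+\tilde f(1)^{\frac{n-1}{2}} = 2f(0)^{\frac{n-1}{2}}$; to conclude $\tilde f(0)=\tilde f(1)=f(0)$ one needs a finer term in the expansion (the $O(1)$ correction $\pm(\ln h)'(k)/(4\sqrt{f(k)})$), comparing the two possible pairings of these subleading constants — this is where the $\mathcal{D}_b$ hypothesis on $q_f$ and the constant $\varepsilon_m\to 0$ are used to ensure the matching is exact rather than approximate.

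The main obstacle I anticipate is precisely this bookkeeping with multiplicities and the disconnected boundary: because the spectrum $\Sigma(\Lambda_g(\omega))$ is the \emph{union} of two interlaced families rather than a single one, the relation $\asymp$ does not a priori respect the decomposition into $\{\lambda^-\}$ and $\{\lambda^+\}$, so one cannot simply "read off" $f(1)$ from the $\lambda^-$ family and $f(0)$ from the $\lambda^+$ family. One must argue at the level of the combined counting function and show that a window-by-window matching with errors $\varepsilon_m\to 0$ is rigid enough to separate the two slopes (and, in the degenerate equal-slope case, the two subleading constants). Once the multiset equality $\{f(0),f(1)\}=\{\tilde f(0),\tilde f(1)\}$ is established, the stated alternative follows immediately by distinguishing the two possible identifications.
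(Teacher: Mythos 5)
Your first step is essentially the paper's opening move, though with a slip in the exponents: under the hypothesis of this lemma the closeness is between the sets $\Sigma$ built on $\kappa_m=m(m+n-2)$ counted \emph{without} multiplicity, so each family $\lambda^{\pm}(\kappa_m)$ is asymptotically an arithmetic progression with gap $1/\sqrt{f(0)}$ resp. $1/\sqrt{f(1)}$, and the counting function of the union is linear in $T$; what one obtains is $\sqrt{f(0)}+\sqrt{f(1)}=\sqrt{\tilde f(0)}+\sqrt{\tilde f(1)}$, not a relation between the powers $(n-1)/2$ (that would correspond to the multiplicity-counted spectrum $\sigma$ and the Weyl law for $\mu_m$). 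This is a secondary issue; the important point is that this step only yields one scalar equation, i.e.\ the sum of the square roots.

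The genuine gap is in your second step. You propose to deduce the multiset equality $\{f(0),f(1)\}=\{\tilde f(0),\tilde f(1)\}$ by ``counting how many eigenvalues of each family fall below $T$'' and comparing the proportions on both sides. But the hypothesis only concerns the unions: the spectrum does not come labeled by family, and, as you yourself note, the relation $\asymp$ need not respect the decomposition into $\{\lambda^-\}$ and $\{\lambda^+\}$. A leading-order count of the union recovers only the sum $\sqrt{f(0)}+\sqrt{f(1)}$, and your assertion that a ``window-by-window matching is rigid enough to separate the two slopes'' is precisely the statement that requires proof; the proposal does not supply it. The paper fills this hole with a concrete local gap argument: assuming $f(0)\notin\{\tilde f(0),\tilde f(1)\}$, say $f(0)<\min\{\tilde f(0),\tilde f(1)\}$, the sum equality forces $f(1)>\max\{\tilde f(0),\tilde f(1)\}$, so the $\lambda^-(\kappa_m)$ are strictly more densely spaced than both tilde families; tracking four consecutive eigenvalues $\lambda^-(\kappa_{m-1}),\dots,\lambda^-(\kappa_{m+2})$ and determining which tilde eigenvalues they can be $\varepsilon_m$-close to, one gets the gap identities $2/\sqrt{f(1)}=1/\sqrt{\tilde f(0)}+o(1)$ and $2/\sqrt{f(1)}=1/\sqrt{\tilde f(1)}+o(1)$, which combined with the sum equality give $\sqrt{f(0)}=0$, a contradiction. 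Finally, your appeal to the subleading constants and to $q_f\in\mathcal{D}_b$ in the degenerate case $f(0)=f(1)$ is not needed for this lemma: once $f(0)\in\{\tilde f(0),\tilde f(1)\}$ is established, the sum equality alone yields the stated alternative.
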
	
	
	$\quad$
	
	\begin{proof}
	
	$\quad$
	
	$\quad$
	
	\begin{enumerate}[label=\alph*), align=left, leftmargin=*, noitemsep]
		\item[$\bullet$] We first show the equality
		\begin{equation}
		\label{coef_2}
		\sqrt{f(0)}+\sqrt{f(1)}=\sqrt{\tilde{f}(0)}+\sqrt{\tilde{f}(1)}
		\end{equation}
	\end{enumerate}
	
	\medskip
	
	\noindent As $\displaystyle \sqrt{\kappa_m}=m+\frac{n-2}{2}+O\bigg(\frac{1}{m}\bigg)$, we get from Lemma \ref{asymp_vp_2} the following asymptotics
	\begin{equation}
	\label{canard_2}
	\left\{
	\begin{aligned}
	&\lambda^-(\kappa_m)=\frac{m}{\sqrt{f(1)}} +\frac{n-2}{2\sqrt{f(1)}} -\frac{(\ln h)'(1)}{4\sqrt{f(1)}}+ O\bigg(\frac{1}{m}\bigg) \\
	&\lambda^+(\kappa_m)=\frac{m}{\sqrt{f(0)}}+\frac{n-2}{2\sqrt{f(0)}} +\frac{(\ln h)'(0)}{4\sqrt{f(0)}}+O\bigg(\frac{1}{m}\bigg)
	\end{aligned}
	\right.
	\end{equation}
	\noindent

	\noindent Let $L>0$. The sequences $(\lambda^{\pm}(\kappa_m))$ are asymptotically in arithmetic progression. Combined with the relation
	\begin{equation*}
	\Sigma\big(\Lambda_g(\omega)\big)\underset{(\varepsilon_m)}{\asymp}\Sigma\big(\Lambda_{\tilde{g}}(\omega)\big),
	\end{equation*}
	\noindent this leads to the equality (when $L\to+\infty$)
	\begin{equation}
	\label{egcard}
	\begin{aligned}
	\mathrm{Card}&\big\{m\in\mathbb{N},\: \lambda^-(\kappa_m)\le L\big\}\quad+\quad\mathrm{Card}\big\{m\in\mathbb{N},\: \lambda^+(\kappa_m)\le L\big\}\\&\quad=\quad\mathrm{Card}\big\{m\in\mathbb{N},\: \tilde{\lambda}^-(\kappa_m)\le L\big\}\quad+\quad\mathrm{Card}\big\{m\in\mathbb{N},\: \tilde{\lambda}^+(\kappa_m)\le L\big\} + O(1).
	\end{aligned}
	\end{equation} 
	
	$\quad$ 
	
	\noindent Thanks to the asymptotics (\ref{canard_2}), we deduce that :	
	\begin{equation*}
	\begin{aligned}
	\mathrm{Card}\big\{m\in\mathbb{N},\: m&\le \sqrt{f(1)}L\big\}\quad+\quad\mathrm{Card}\big\{m\in\mathbb{N},\: m\le \sqrt{f(0)}L\big\}\\&\quad=\quad\mathrm{Card}\big\{m\in\mathbb{N},\: m\le \sqrt{\tilde{f}(1)}L\big\}\quad+\quad\mathrm{Card}\big\{m\in\mathbb{N},\: m\le \sqrt{\tilde{f}(0)}L\big\} + O(1),
	\end{aligned}
	\end{equation*}
	and then that :
	\begin{equation*}
	\sqrt{f(1)}L+\sqrt{f(0)}L=\sqrt{\tilde{f}(1)}L+\sqrt{\tilde{f}(0)}L+O(1),\qquad L\to+\infty.
	\end{equation*}
	
	\medskip
	
	\noindent As $L$ is any positive number, this proves (\ref{coef_2}).
	
$\quad$

		\noindent $\bullet$ Now, we have to show : $f(0)\in\{\tilde{f}(0),\tilde{f}(1)\}$.

$\quad$
	
	\noindent Assume that it is not true, for example 
	\begin{equation}
	\label{chou_2}
	\displaystyle f(0)<\min\{\tilde{f(0)},\tilde{f}(1)\}.
	\end{equation}
	Then (\ref{coef_2}) implies 
	\begin{equation}
	\label{fleur_2}
	f(1)>\max\{\tilde{f}(0),\tilde{f}(1)\}.
	\end{equation}

	$\quad$

	\noindent Let $m$ be in $\mathbb{N}$. There is $\ell\in\mathbb{N}$ such that
	\begin{equation}
	\label{babayetu_2}
	\lambda^-(\kappa_m)-\tilde{\lambda}^-(\kappa_\ell)= O(\varepsilon_m),
	\end{equation} 
	
	\noindent with $|O(\varepsilon_m)|\le \varepsilon_m$. From the assumption  $\sigma\big(\Lambda_g(\omega)\big)\underset{(\varepsilon_m)}{\asymp}\sigma\big(\Lambda_{\tilde{g}}(\omega)\big)$, each of the eigenvalues $\lambda^-(\kappa_{m-1})$, $\lambda^-(\kappa_{m+1})$ and $\lambda^-(\kappa_{m+2})$ is also close to an element of $\sigma\big(\Lambda_{\tilde{g}}(\omega)\big)$. If $m$ is large enough, the situation is necessary the following:
	
	\begin{center}
		\begin{tikzpicture}[scale=6.7]
		[domain=0:4]
		\draw[->] (0,0) -- (2,0) node[right] {$\big(\lambda^-(\kappa_m)\big)$} ;
		\draw [black,line width=1.2pt](0.5,-0.02)--(0.5,0.02)node[above]{$\lambda^-(\kappa_{m-1})$};
		\draw [black,line width=1.2pt](0.8,-0.02)--(0.8,0.02)node[above]{$\lambda^-(\kappa_m)$};
		\draw [black,line width=1.2pt](1.1,-0.02)--(1.1,0.02)node[above]{$\lambda^-(\kappa_{m+1})$};
		\draw [black,line width=1.2pt](1.4,-0.02)--(1.4,0.02)node[above]{$\lambda^-(\kappa_{m+2})$};
		
		\draw[->] (0,-0.3) -- (2,-0.3) node[right] {$\big(\tilde{\lambda}^-(\kappa_\ell)\big)$};
		\draw [black!40,line width=1.2pt](0.19,-0.32)--(0.19,-0.28)node[above]{$\tilde{\lambda}^-(\kappa_{\ell-1})$};
		\draw [black,line width=1.2pt](0.79,-0.32)--(0.79,-0.28)node[above]{$\tilde{\lambda}^-(\kappa_\ell)$};
		\draw [black,line width=1.2pt](1.39,-0.32)--(1.39,-0.28)node[above]{$\tilde{\lambda}^-(\kappa_{\ell+1})$};
		
		\draw[->] (0,-0.6) -- (2,-0.6) node[right] {$\big(\tilde{\lambda}^+(\kappa_p)\big)$};
		\draw [black,line width=1.2pt](0.52,-0.62)--(0.52,-0.58)node[above]{$\tilde{\lambda}^+(\kappa_{p-1})$};
		\draw [black,line width=1.2pt](1.12,-0.62)--(1.12,-0.58)node[above]{$\tilde{\lambda}^+(\kappa_{p})$};
		\draw [black!40,line width=1.2pt](1.72,-0.62)--(1.72,-0.58)node[above]{$\tilde{\lambda}^+(\kappa_{p+1})$};

		\draw [dashed,color=violet] (0.74,0.02) -- (0.74,-0.18) ;
		\draw [dashed,color=violet] (0.84,0.02) -- (0.84,-0.18) ;
		\draw [dashed,color=violet] (0.74,-0.28) -- (0.74,-0.36) ;
		\draw [dashed,color=violet] (0.84,-0.28) -- (0.84,-0.36) ;
		\draw [<->,samples=100,color=black!40,line width=1pt](0.74,-0.39)--(0.84,-0.39)node[below]{$\varepsilon_m \qquad$};
		
		\draw [dashed,color=violet] (1.05,0.02) -- (1.05,-0.48) ;
		\draw [dashed,color=violet] (1.15,0.02) -- (1.15,-0.48) ;
		\draw [dashed,color=violet] (1.05,-0.58) -- (1.05,-0.7) ;
		\draw [dashed,color=violet] (1.15,-0.58) -- (1.15,-0.7) ;
		\draw [<->,samples=100,color=black!40,line width=1pt] (1.05,-0.74)--(1.15,-0.74)node[below]{$\varepsilon_{m+1} \qquad$};
		\end{tikzpicture}
	\end{center}

	\noindent Indeed, since $\displaystyle f(1)>\tilde{f}(1)$, for $m$ large enough, from (\ref{canard_2}) and (\ref{babayetu_2}), we have
	\begin{equation*}
	\begin{aligned}
	\lambda^-(\kappa_{m+1})&=\lambda^-(\kappa_{m}) +\frac{1}{\sqrt{f(1)}}+o(1)\\
	&=\tilde{\lambda}^-(\kappa_\ell)+O(\varepsilon_m)+\frac{1}{\sqrt{f(1)}}+o(1)\\
	&=\tilde{\lambda}^-(\kappa_{\ell+1})+\underbrace{\frac{1}{\sqrt{f(1)}}-\frac{1}{\sqrt{\tilde{f}(1)}}}_{<0}+O(\varepsilon_m)+o(1)
	\end{aligned}
	\end{equation*}
	\noindent Let us chose $m$ large enough such that
	\begin{equation*}
	\left\{\begin{aligned}
	&\frac{1}{\sqrt{f(1)}}-\frac{1}{\sqrt{\tilde{f}(1)}}+O(\varepsilon_m)+o(1) <\varepsilon_{m+1}\\
	&O(\varepsilon_m)+\frac{1}{\sqrt{f(1)}} +o(1)>\varepsilon_{m+1}
	\end{aligned}\right.
	\end{equation*}
	\noindent Then:
	\begin{equation*}
	\tilde{\lambda}^-(\kappa_{\ell})+\varepsilon_{m+1}<\lambda^-(\kappa_{m+1})<\tilde{\lambda}^-(\kappa_{\ell+1})-\varepsilon_{m+1}
	\end{equation*}
	
	\noindent so, as $\big(\tilde{\lambda}^-(\kappa_\ell)\big)$ is a strictly increasing sequence, for $m$ large enough $\lambda^-(\kappa_{m+1})$ is not $\varepsilon_{m+1}$-close to any element of $\big(\tilde{\lambda}^-(\kappa_\ell)\big)$ : there is thus $p\in\mathbb{N}$ such that \begin{equation}
	\label{mozart_2}
	\lambda^-(\kappa_{m+1})-\tilde{\lambda}^+(\kappa_p)=O(\varepsilon_{m+1}),
	\end{equation}
	
	\noindent with $|O(\varepsilon_{m+1})|\le \varepsilon_{m+1}$. 
	
	$\quad$
	
	\noindent For the same reasons, we get also
	$\displaystyle \tilde{\lambda}^-(\kappa_{\ell-1})+\varepsilon_{m-1}<\lambda^-(\kappa_{m-1})<\tilde{\lambda}^-(\kappa_{\ell})-\varepsilon_{m-1}$
	\noindent and one deduces that (the previous picture helps to visualize it)
	\begin{equation}
	\label{beethov_2}
	\lambda^-(\kappa_{m-1})-\tilde{\lambda}^+(\kappa_{p-1})=O(\varepsilon_{m-1}),
	\end{equation} 
	
	\noindent with $|O(\varepsilon_{m-1})|\le \varepsilon_{m-1}$. Since we have $f(1)>\tilde{f}(0)$, we get also
	\begin{equation*}
	\tilde{\lambda}^+(\kappa_{p})+\varepsilon_{m+2}<\lambda^-(\kappa_{m+2})<\tilde{\lambda}^+(\kappa_{p+1})-\varepsilon_{m+2}.
	\end{equation*}
	Consequently
	\begin{equation}
	\label{Faure_2}
	\lambda^-(\kappa_{m+2})-\tilde{\lambda}^-(\kappa_{\ell+1})=O(\varepsilon_{m+2})
	\end{equation} 
	
	\noindent with $|O(\varepsilon_{m+2})|\le \varepsilon_{m+2}$. 
	
	$\quad$
	
	\noindent Then, by (\ref{babayetu_2}), (\ref{mozart_2}), (\ref{beethov_2}) and (\ref{Faure_2}), we have for $m$ large enough :
	
	\begin{equation*}
	\left\{\begin{aligned}
	&\lambda^-(\kappa_{m+1})-\lambda^-(\kappa_{m-1})=\tilde{\lambda}^+(\kappa_{p})-\tilde{\lambda}^+(\kappa_{p-1})+O(\varepsilon_{m+1}) - O(\varepsilon_{m-1})\\
	&\lambda^-(\kappa_{m+2})-\lambda^-(\kappa_{m})=\tilde{\lambda}^-(\kappa_{\ell+1})-\tilde{\lambda}^-(\kappa_{\ell})+O(\varepsilon_{m+2}) - O(\varepsilon_{m}).\\
	\end{aligned}
	\right.
	\end{equation*}
	
	\medskip
	
	\noindent In particular:
	\begin{equation*}
	\left\{\begin{aligned}
	&\frac{2}{\sqrt{f(1)}}=\frac{1}{\sqrt{\tilde{f}(0)}}+o(1)\\
	&\frac{2}{\sqrt{f(1)}}=\frac{1}{\sqrt{\tilde{f}(1)}}+o(1),\\
	\end{aligned}
	\right.
	\end{equation*}
	and so, taking $m\to+\infty$, one deduces 
	\begin{equation*}
	2\sqrt{\tilde{f}(0)}=\sqrt{f(1)}\quad \mathrm{and}\quad 2\sqrt{\tilde{f}(1)}=\sqrt{f(1)}.
	\end{equation*}
	
	\noindent As $\sqrt{f(0)}+\sqrt{f(1)}=\sqrt{\tilde{f}(0)}+\sqrt{\tilde{f}(1)}$, we get
	\begin{equation*}
	\begin{aligned}
	2\sqrt{f(0)}= 2\bigg(\sqrt{\tilde{f}(0)}+\sqrt{\tilde{f}(1)} -\sqrt{f(1)}\bigg) &=\bigg(2\sqrt{\tilde{f}(0)}-\sqrt{f(1)}\bigg)+\bigg(2\sqrt{\tilde{f}(1)}-\sqrt{f(1)}\bigg)\\
	&=0.
	\end{aligned}
	\end{equation*}
	and we get a contradiction as $f(0)>0$.
	
	$\quad$
	
	\noindent Hence $f(0)\in\{\tilde{f}(0),\tilde{f}(1)\}$. The equality (\ref{coef_2}) gives the conclusion.
\end{proof}

	\noindent Assume from now that $f(0)=\tilde{f}(0)$ and $f(1)=\tilde{f}(1)$. The other case is obtained by substituting the roles of $\tilde{\lambda}^-(\kappa_m)$ and $\tilde{\lambda}^+(\kappa_m)$.

		\subsection{The case $\bf f(0)\ne f(1)$}
	
	\noindent Without loss of generality, we assume that $f(0)<f(1)$. The following lemma focuses on the sequence $\big(\lambda^-(\kappa_p)\big)$ since this is the sequence that grows slower. If we had treated the case $f(0)>f(1)$, the sequence considered in this section would have been $\big(\lambda^+(\kappa_p)\big)$.
	
	\medskip
	
	\begin{lemma}
		\label{un_sur_deux_local_2}
		Assume that $f(0)<f(1)$. There is an infinite subset $\mathcal{L}$ of $\mathbb{N}$ such that
		\begin{enumerate}
			\item[$\bullet$] $\:\:\lambda^-(\kappa_p)-\tilde{\lambda}^-(\kappa_p)=\tilde{O}(e^{-2a\kappa_p}),\quad p\in\mathcal{L}$.
			\item[$\bullet$] For all $m$ in $\mathbb{N}$ large enough, $\{m,m+1\}\cap\mathcal{L}\ne \emptyset$.
		\end{enumerate}
	\end{lemma}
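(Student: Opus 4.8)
I would set $\mathcal{L}$ to be the set of indices $p\in\mathbb{N}$ for which $\lambda^-(\kappa_p)$ is $\varepsilon_p$-close to \emph{some} term of the sequence $\big(\tilde\lambda^-(\kappa_k)\big)_{k}$, and then prove the two bullets in turn: the ``frequency larger than $1/2$'' statement first, by comparing the growth rates of the four sequences $\big(\lambda^\pm(\kappa_m)\big)$, $\big(\tilde\lambda^\pm(\kappa_m)\big)$, and then the exponential closeness, by showing that the term of $\big(\tilde\lambda^-(\kappa_k)\big)$ matched to $\lambda^-(\kappa_p)$ is $\tilde\lambda^-(\kappa_p)$ itself. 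Throughout I use the expansions from (\ref{canard_2}): since $f(0)=\tilde f(0)$ and $f(1)=\tilde f(1)$, one has, with $h=f^{n-2}$ and $\tilde h=\tilde f^{n-2}$, that $\lambda^-(\kappa_m)=\frac{m}{\sqrt{f(1)}}+\beta+O(1/m)$ and $\tilde\lambda^-(\kappa_m)=\frac{m}{\sqrt{f(1)}}+\tilde\beta+O(1/m)$ with the \emph{same} slope $1/\sqrt{f(1)}$, whereas $\lambda^+,\tilde\lambda^+$ have the strictly larger slope $1/\sqrt{f(0)}$ because $f(0)<f(1)$.

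\textbf{Step 1: the frequency of $\mathcal{L}$.} The key claim is that, for $m$ large, $\lambda^-(\kappa_m)$ and $\lambda^-(\kappa_{m+1})$ cannot both be $\varepsilon$-close to terms of $\big(\tilde\lambda^+(\kappa_k)\big)$. Indeed, assume $|\lambda^-(\kappa_m)-\tilde\lambda^+(\kappa_k)|\le\varepsilon_m$ (then $k\to\infty$ with $m$). The consecutive gap of $\big(\lambda^-(\kappa_m)\big)$ is $\frac{1}{\sqrt{f(1)}}+o(1)$ while that of $\big(\tilde\lambda^+(\kappa_k)\big)$ is $\frac{1}{\sqrt{f(0)}}+o(1)$, and $\frac{1}{\sqrt{f(1)}}<\frac{1}{\sqrt{f(0)}}$; hence $\lambda^-(\kappa_{m+1})$ lies strictly between $\tilde\lambda^+(\kappa_k)$ and $\tilde\lambda^+(\kappa_{k+1})$, at distance bounded below, for $m$ large, by the fixed positive constant $\tfrac12\min\big(\tfrac{1}{\sqrt{f(1)}},\,\tfrac{1}{\sqrt{f(0)}}-\tfrac{1}{\sqrt{f(1)}}\big)$ from each of them, so it is not $\varepsilon_{m+1}$-close to any $\tilde\lambda^+(\kappa_j)$ once $\varepsilon_{m+1}$ lies below that constant. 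But $\Sigma(\Lambda_g(\omega))\underset{(\varepsilon_m)}{\asymp}\Sigma(\Lambda_{\tilde g}(\omega))$ forces $\lambda^-(\kappa_{m+1})$ to be $\varepsilon_{m+1}$-close to some element of $\Sigma(\Lambda_{\tilde g}(\omega))$, which must then be a term of $\big(\tilde\lambda^-(\kappa_k)\big)$, i.e. $m+1\in\mathcal{L}$. Thus for $m$ large, $m\notin\mathcal{L}\Rightarrow m+1\in\mathcal{L}$, which gives the second bullet and the infiniteness of $\mathcal{L}$.

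\textbf{Step 2: the matched index is $p$.} Let $p\in\mathcal{L}$ be large, with $|\lambda^-(\kappa_p)-\tilde\lambda^-(\kappa_k)|\le\varepsilon_p$ for some $k=k(p)$. Subtracting the two expansions (equal slopes) gives $\big|\tfrac{p-k}{\sqrt{f(1)}}-(\tilde\beta-\beta)\big|\le\varepsilon_p+O(1/p)$, so $p-k$ is bounded and, being an integer, equals the fixed value $d:=\sqrt{f(1)}\,(\tilde\beta-\beta)\in\mathbb{Z}$ for $p$ large. From (\ref{canard_2}) one has $\tilde\beta-\beta=\frac{(\ln h)'(1)-(\ln\tilde h)'(1)}{4\sqrt{f(1)}}=\frac{(n-2)\big(f'(1)-\tilde f'(1)\big)}{4\,f(1)^{3/2}}$, hence $|d|=\frac{n-2}{4f(1)}\big|f'(1)-\tilde f'(1)\big|\le\frac{n-2}{4f(1)}\big(|f'(1)|+|\tilde f'(1)|\big)\le\tfrac12$, because $f,\tilde f\in\mathcal{C}_b$ satisfy $\big|\tfrac{f'(1)}{f(1)}\big|\le\tfrac{1}{n-2}$ and $\big|\tfrac{\tilde f'(1)}{\tilde f(1)}\big|\le\tfrac{1}{n-2}$ with $\tilde f(1)=f(1)$; when $n=2$ one has $h\equiv\tilde h\equiv 1$, so $\tilde\beta=\beta$ directly. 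Therefore $d=0$, i.e. $k(p)=p$ for all large $p\in\mathcal{L}$, so $|\lambda^-(\kappa_p)-\tilde\lambda^-(\kappa_p)|\le\varepsilon_p=\tilde{O}\big(e^{-2a\sqrt{\kappa_p}}\big)$; the finitely many small indices of $\mathcal{L}$ are irrelevant for this $\tilde{O}$ statement.

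\textbf{Main obstacle.} The delicate point is Step 1, where one must track the $o(1)$ terms in the asymptotics together with the sizes of $\varepsilon_m$ and $\varepsilon_{m+1}$ so as to guarantee that, past some index, $\lambda^-(\kappa_{m+1})$ is separated from the \emph{entire} sequence $\big(\tilde\lambda^+(\kappa_k)\big)$ by strictly more than $\varepsilon_{m+1}$ — and this is exactly where the strict inequality $f(0)<f(1)$ is used. Step 2 is short but tight: the conclusion $d=0$ relies on the defining inequality of $\mathcal{C}_b$ with no room to spare.
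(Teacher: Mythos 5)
Your proof is correct and follows essentially the same route as the paper: it too relies on the asymptotics (\ref{canard_2}), on the strict inequality $f(0)<f(1)$ to rule out two consecutive indices whose $\lambda^-$ both match terms of $\big(\tilde{\lambda}^+(\kappa_k)\big)$ (the paper phrases this through the index functions $\varphi,\psi$ and the relation $\psi(m)=B\varphi(m)+C'+o(1)$ with $B=\sqrt{f(1)}/\sqrt{f(0)}>1$, which is just your gap/interleaving argument in index form), and on the same integer argument with the $\mathcal{C}_b$ bound $\le\tfrac{1}{2}$ to force the matched index to equal $p$. Your only departures are organizational: you avoid the paper's case split on whether the exceptional set $U$ is finite or infinite, and your remark that the limit $d$ must itself be an integer disposes of the borderline case $|d|=\tfrac{1}{2}$ a bit more cleanly than the paper's wording.
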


	\begin{proof}
	\noindent Let us denote $U$ the subset of $\{\lambda^-(\kappa_m),\:\: m\in\mathbb{N}\}$ such that :
	\begin{center}
		$U\underset{(\varepsilon_m)}{\subsetsim}\:\{\tilde{\lambda}^+(\kappa_m),\:\: m\in\mathbb{N}\}$
	\end{center} 
	
	\noindent \textit{Case 1} : $U$ is finite. Then there is $m_0\in\mathbb{N}$ such that :
	\begin{equation*}
	\forall m\ge m_0,\quad \exists p\in\mathbb{N},\:\: |\lambda^-(\kappa_m)-\tilde{\lambda}^-(\kappa_p)|\le \varepsilon_m
	\end{equation*} 
	\noindent This can be written as :
	\begin{equation*}
	\lambda^-(\kappa_m)-\tilde{\lambda}^-(\kappa_p)= O(\varepsilon_m)
	\end{equation*}
	\noindent with $|O(\varepsilon_m)|\le \varepsilon_m$. By replacing the eigenvalues by their asymptotics (\ref{canard_2}) in the previous equality, one finds, as $f(1)=\tilde{f}(1)$:
	\begin{equation*}
	\frac{m}{\sqrt{f(1)}}+\frac{n-2}{2\sqrt{f(1)}}-\frac{\ln(h)'(1)}{4\sqrt{f(1)}}=\frac{p}{\sqrt{f(1)}}+\frac{n-2}{2\sqrt{f(1)}}-\frac{\ln(\tilde{h})'(1)}{4\sqrt{f(1)}}+O(\varepsilon_m)
	\end{equation*}
	
	\medskip
	
	\begin{enumerate}
		\item[$\bullet$] If $n=2$ then $h=f^{n-2}$ is a constant. One has $\displaystyle \frac{m}{\sqrt{f(1)}}=\frac{p}{\sqrt{f(1)}}+O(\varepsilon_m)$. Hence, as $m$ and $p$ are integers, if $m$ is large enough, we have $m=p$.
		\item[$\bullet$] If $n\ge 3$, then $\displaystyle m-p=\frac{\ln(h)'(1)}{4}-\frac{\ln(\tilde{h})'(1)}{4}+O(\varepsilon_m)$. By hypothesis :
		\begin{equation*}
		\bigg|\frac{\ln(h)'(1)}{4}-\frac{\ln(\tilde{h})'(1)}{4}\bigg|=\frac{n-2}{4}\bigg|\frac{f'(1)}{f(1)}-\frac{\tilde{f}'(1)}{f(1)}\bigg|\le \frac{n-2}{4}\times \frac{2}{n-2}= \frac{1}{2}.
		\end{equation*}
		Hence, $m=p$ for $m,p$ greater than some integer $m_0$. 
	\end{enumerate}
	
	$\quad$
	
	\noindent The set $\mathcal{L}=\{m\in\mathbb{N},\:\:m\ge m_0 \}$ satisfies the properties of Lemma \ref{un_sur_deux_local_2}.
	
	$\quad$	
	
	\noindent \textit{Case 2} : $U$ is infinite. Then, there exists $\varphi,\psi :\mathbb{N}\to\mathbb{N}$ two strictly increasing functions such that :
	\begin{equation}
	\label{egu_2}
	\lambda^-(\kappa_{\psi(m)})-\tilde{\lambda}^+(\kappa_{\varphi(m)})=O(\varepsilon_{\psi(m)}),
	\end{equation}
	
	\noindent with $|O(\varepsilon_{\psi(m)})|\le\varepsilon_{\psi(m)}$.
	
	\medskip
	
	\begin{remark}
		\label{imeg}
		$\varphi$ and $\psi$ are built in such a way that an integer $m\in\mathbb{N}$ that is not in the range of $\psi$ (respectively not in the range of $\varphi$) satisfies $|\lambda^-(\kappa_m)-\tilde{\lambda}^-(\kappa_n)|<\varepsilon_m$ for some $n\in\mathbb{N}$ (respectively $|\lambda^+(\kappa_n)-\tilde{\lambda}^+(\kappa_m)|<\varepsilon_n$ for some $n\in\mathbb{N}$). 
	\end{remark}
	
	$\quad$
	
	\noindent By replacing $\lambda^+(\kappa_{\varphi(m)})$ and $\tilde{\lambda}^-(\kappa_{\psi(m)})$ with their asymptotics in the equality (\ref{egu_2}), one has :
	\begin{equation}
	\label{subsequence_2}
	\frac{\varphi(m)}{\sqrt{f(0)}}=\frac{\psi(m)}{\sqrt{f(1)}}+C+O(\varepsilon_{\psi(m)})+O\bigg(\frac{1}{\varphi(m)}\bigg)
	\end{equation} 
	
	$\quad$
	
	\medskip
	
	\noindent where $\displaystyle C=-\frac{\ln(h)'(1)}{4\sqrt{f(1)}}-\frac{\ln(h)'(0)}{4\sqrt{f(0)}}+\frac{n-2}{2\sqrt{f(1)}}-\frac{n-2}{2\sqrt{f(0)}}$.
	
	\medskip		
			
				\begin{lemma}
				\label{image_2}
				There is an integer $m_0\in\mathbb{N}$ such that \big($m\ge m_0\Rightarrow\psi(m+1)\ge\psi(m)+ 2\big)$. 
			\end{lemma}
			
			\begin{proof}
				Set $\displaystyle B=\frac{\sqrt{f(1)}}{\sqrt{f(0)}}>1\:$ and $\: C'=-\sqrt{f(1)}C$. From (\ref{subsequence_2}), one gets :
				\begin{equation*}
				\psi(m)=B\varphi(m)+C'+o(1).
				\end{equation*}
				Assume $\psi(m+1)=\psi(m)+1$. Then :
				\begin{equation*}
				\begin{aligned}
				\psi(m)+1=\psi(m+1)&=B\varphi(m+1)+C'+o(1)\\
				&\ge B(\varphi(m)+1)+C'+o(1)\\
				&=B\varphi(m)+C'+B+o(1)\\
				&=\psi(m)+B+o(1).
				\end{aligned}
				\end{equation*}
				
				\noindent Thus, we find :
				
				\begin{equation*}
				1\ge B+o(1)
				\end{equation*} 
				
				\noindent which is false if $m\ge m_0$ for some $m_0\in\mathbb{N}$.
			\end{proof}

				\noindent Consequently, the range of $\psi$ does not contain two consecutive integers. Let us set :
			\begin{equation*}
			\mathcal{L}=\{m\in\mathbb{N}\:\big|\: m\ge m_0,\:\: m\notin \mathrm{range}(\psi)\}.
			\end{equation*}
			
			\noindent Then $\mathcal{L}$ satisfies the condition $\mathcal{L}\cap\{m,m+1\}\ne\emptyset$ for any $m\ge m_0$. Moreover, for any $m\in\mathcal{L}$, there is $\ell\in\mathbb{N}$ such that $|\lambda^-(\kappa_m)-\tilde{\lambda}^-(\kappa_\ell)|\le \varepsilon_m$. One deduces, as previously, $m=\ell$.
	\end{proof}
	
	\begin{remark}
		\label{constants_2}
		Lemma \ref{un_sur_deux_local_2} and asymptotics (\ref{canard_2}) imply in particular \begin{center}
			$\displaystyle \frac{(\ln h)'(1)}{4\sqrt{f(1)}}=\frac{(\ln \tilde{h})'(1)}{4\sqrt{\tilde{f}(1)}}$.
		\end{center} 
	\end{remark}
	
	\medskip
	
	\noindent Now, let us recall an asymptotic integral representation of the Weyl-Titschmarsh function $N(z^2)$ obtained by Simon in \cite{simon1999new} (Theorem 3.1) :
	
	\medskip
	
	\begin{theorem}
		\label{Simon_repr_2}
		For every $0<a<1$, there is $A\in L^1([0,a])$ such that
		\begin{equation}
		N(z^2)=-z-\int_{0}^{a}A(x)e^{-2xz}dx + \tilde{O}(e^{-2az}),\qquad z\to+\infty.
		\end{equation}
	\end{theorem}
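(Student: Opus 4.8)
The plan is to deduce this from Theorem~3.1 of \cite{simon1999new}. First, by Remark~\ref{WT_symmetry} one has $N(z,q)=M(z,q\circ\eta)$, so it suffices to prove, for an arbitrary potential $q$ on $[0,1]$, the analogous representation
\begin{equation*}
M(z^2) = -z - \int_0^{a} A(x)e^{-2xz}\,dx + \tilde{O}(e^{-2az}), \qquad z\to+\infty,
\end{equation*}
with $A\in L^1([0,a])$ depending only on $q|_{[0,a]}$, and then to apply it to $q\circ\eta$ to recover the claim for $N$.

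To establish this representation for $M$, I would use that, for real $z\to+\infty$, $M(z^2)=m(0)$, where $m(x):=\psi'(x)/\psi(x)$ and $\psi(\cdot,z)$ is the solution of $-\psi''+q\psi=-z^2\psi$ with $\psi(1,z)=0$. The function $m$ solves the Riccati equation $m' = q + z^2 - m^2$ on $[0,1[$, with $m(x)\sim -1/(1-x)$ as $x\to1$. Comparing with the free case $q\equiv0$, where $m_0(x)=-z\coth(z(1-x))$ and $m_0(0)=-z\coth z = -z+\tilde{O}(e^{-2z})$, I would set $m=m_0+n$, which gives $n'+2m_0 n = q - n^2$ with $n$ bounded as $x\to1$; integrating with the integrating factor $\sinh^2(z(1-x))$ turns this into the Volterra fixed-point equation
\begin{equation*}
n(x) = -\int_x^{1} \frac{\sinh^2(z(1-t))}{\sinh^2(z(1-x))}\,\big(q(t)-n(t)^2\big)\,dt ,
\end{equation*}
whose kernel has $L^1$-norm $O(1/z)$ in $t$ uniformly in $x$, so it is a contraction for $z$ large; hence $n=O(1/z)$ uniformly on $[0,1]$ and $M(z^2) = -z + n(0) + \tilde{O}(e^{-2z})$.

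The substantive step is to read off a genuine Laplace transform from $n(0) = -\int_0^{1}\frac{\sinh^2(z(1-t))}{\sinh^2 z}\,(q(t)-n(t)^2)\,dt$. Splitting the integral at $x=a$, the part over $[a,1]$ is $\tilde{O}(e^{-2az})$ since $\sinh^2(z(1-t))/\sinh^2 z = O(e^{-2zt})$, whereas on $[0,a]$ I would substitute $\sinh^2(z(1-t))/\sinh^2 z = e^{-2zt}+\tilde{O}(e^{-2z})$ and insert recursively the base-point-dependent ansatz $n(x) = -\int_0^{a-x} A(\alpha,x)\,e^{-2z(x+\alpha)}\,d\alpha + \tilde{O}(e^{-2az})$. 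Since a product of Laplace transforms is the transform of a convolution and $\int_0^a e^{-2zt}(\,\cdot\,)\,dt$ merely shifts the transform variable, this ansatz is self-consistent: matching the kernels yields a closed integral equation for $A(\cdot,x)$ on the triangle $\{0\le\alpha,\ 0\le x,\ x+\alpha\le a\}$, solvable by a Neumann series convergent in $\|q\|_\infty$ and independent of $z$. Taking $x=0$ and $A:=A(\cdot,0)$ gives the asserted formula, and applying it to $q\circ\eta$ yields the statement for $N$.

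The hard part is precisely this last step: showing that the formal high-energy expansion of $M(z^2)$ --- whose coefficients are the $\beta_j$ of Theorem~\ref{Simon_2} --- genuinely re-sums into a $z$-independent $L^1$ kernel, while controlling the growth of $A$ so that the tail over $[a,1]$ and every discarded term is indeed $\tilde{O}(e^{-2az})$ in the sense defined above. For the complete argument I would refer to \cite{simon1999new}.
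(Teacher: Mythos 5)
Your proposal is correct and in substance coincides with what the paper does: the paper does not prove this statement but simply quotes it as Theorem 3.1 of \cite{simon1999new}, applied to $N$ through the symmetry $N(z,q)=M(z,q\circ\eta)$ of Remark \ref{WT_symmetry}, which is exactly your reduction, and like the paper you defer the substantive resummation of the Riccati/Volterra iteration into a $z$-independent $L^1$ kernel $A$ to Simon's paper. Your preliminary steps (the Riccati equation for $m=\psi'/\psi$, comparison with $m_0(x)=-z\coth(z(1-x))$, the contraction giving $n=O(1/z)$, and the $\tilde{O}(e^{-2az})$ tail over $[a,1]$) are an accurate outline of how that cited proof proceeds.
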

	
	$\quad$
	
	\noindent From the asymptotic of $\lambda^-(\kappa_m)$ obtained in Lemma \ref{asymp_vp_2}, we have
	\begin{equation*}
	\lambda^-(\kappa_m)=-\frac{N(\kappa_m)}{\sqrt{f(1)}}-\frac{(\ln h)'(1)}{4\sqrt{f(1)}}+\tilde{O}\bigg(e^{-2\sqrt{\kappa_m}}\bigg).	 
	\end{equation*}
	
	$\quad$
	
	\noindent Hence
	\begin{equation*}
	\begin{aligned}
	\lambda^-(\kappa_m)-&\tilde{\lambda}^-(\kappa_m)=
	 \tilde{O}(e^{-2a\sqrt{\kappa_m}}) \\
	&\Rightarrow -\frac{N(\kappa_m)}{\sqrt{f(1)}}-\frac{(\ln h)'(1)}{4\sqrt{f(1)}}=-\frac{\tilde{N}(\kappa_m)}{\sqrt{\tilde{f}(1)}}-\frac{(\ln \tilde{h})'(1)}{4\sqrt{\tilde{f}(1)}}+\tilde{O}(e^{-2a\sqrt{\kappa_m}})+\tilde{O}\bigg(e^{-2\sqrt{\kappa_m}}\bigg)
	\end{aligned}
	\end{equation*}
	\noindent  The equalities $f(1)=\tilde{f}(1)\:$ and $\:\displaystyle \frac{(\ln h)'(1)}{4\sqrt{f(1)}}=\frac{(\ln \tilde{h})'(1)}{4\sqrt{\tilde{f}(1)}}\:$ (see Remark \ref{constants_2}) imply
	\begin{equation*}
	\begin{aligned}
	\lambda^-(\kappa_m)-\tilde{\lambda}^-(\kappa_m)=
	\tilde{O}(e^{-2a\sqrt{\kappa_m}})&\Rightarrow N(\kappa_m)=\tilde{N}(\kappa_m)+\tilde{O}(e^{-2a\sqrt{\kappa_m}})\\
	&\Rightarrow \int_{0}^{a}A(x)e^{-2x\sqrt{\kappa_m}}dx=\int_{0}^{a}\tilde{A}(x)e^{-2x\sqrt{\kappa_m}}dx + \tilde{O}(e^{-2a\sqrt{\kappa_m}})\\
	&\Rightarrow \int_{0}^{a}(A(x)-\tilde{A}(x))e^{-2x\sqrt{\kappa_m}}dx = \tilde{O}(e^{-2a\sqrt{\kappa_m}}).
	\end{aligned}
	\end{equation*}

	\noindent Let $\varepsilon>0$ and set $\displaystyle F(z)=e^{2a(1-\varepsilon)z}\int_{0}^{a}(A(x)-\tilde{A}(x))e^{-2xz}dx$. The function $F$ is entire and satisfies
	\begin{equation*}
	\forall z\in\mathbb{C},\quad \mathrm{Re}(z)> 0 \Rightarrow |F(z)|\le \|A-\tilde{A}\|_{1}e^{2a(1-\varepsilon)\mathrm{Re}(z)}
	\end{equation*}

	\noindent Let $m$ be an integer large enough. From Lemma \ref{un_sur_deux_local_2}, we can find an integer $p$ in $\{2m,2m+1\}$ such that  $|\lambda^-(\kappa_p)-\tilde{\lambda}^-(\kappa_p)|\le \varepsilon_p$. We can thus build a sequence $(u_m)$ by setting, for each $m$ large enough, $\displaystyle u_m=\frac{\sqrt{\kappa_p}}{2}$. This sequence satisfies
	\begin{equation*}
	u_m-m= O(1).
	\end{equation*}
	
	\noindent We set at last $G(z)=F(2z)$. Then $|G(z)|\le \|A-\tilde{A}\|_{1}e^{4a(1-\varepsilon)\mathrm{Re}(z)}$ and moreover :
	\begin{equation*}
	G(u_m) = o(1)
	\end{equation*}
	
	\noindent Consequently (cf \cite{boas2011entire}, Theorem 10.5.1, p.191), $G$ is bounded on $\mathbb{R}_+$, and so is $F$ :
	\begin{equation*}
	\forall u\in\mathbb{R}_+,\quad\int_{0}^{a}(A(x)-\tilde{A}(x))e^{-2xu}dx=O\big(e^{-2a(1-\varepsilon)u}\big)
	\end{equation*}
	
	\noindent As this estimate is true for all $\varepsilon>0$, we have :
	\begin{equation*}
	\forall u\in\mathbb{R}_+,\quad \int_{0}^{a}(A(x)-\tilde{A}(x))e^{-2xu}dx=\tilde{O}\big(e^{-2au}\big),
	\end{equation*}
	
	\noindent hence (\cite{simon1999new}, Lemma A.2.1) $A=\tilde{A}$ on $[0,a]$. One deduces :
	
	\begin{equation*}
	\forall t\in\mathbb{R},\quad N(t^2)-\tilde{N}(t^2)=\tilde{O}(e^{-2at})
	\end{equation*}
	
	\medskip
	
	\noindent From Remark \ref{WT_symmetry}, $N$ (resp. $\tilde{N}$) has the same role as $M$ (resp. $\tilde{M}$) for the potential $x\mapsto q(1-x)$ (resp. $x\mapsto \tilde{q}(1-x)$). Now, it follows, from \cite{simon1999new}, Theorem A.1.1, that we get $q(1-x)=\tilde{q}(1-x)$ for all $x\in[0,a]$, \emph{i.e}
	\begin{equation*}
	\frac{(f^{n-2})''(x)}{f^{n-2}(x)}-\omega f(x) = \frac{(\tilde{f}^{n-2})''(x)}{\tilde{f}^{n-2}(x)}-\omega \tilde{f}(x):=r(x),\quad \forall x\in [1-a,1].
	\end{equation*}
	
	\noindent The functions $f$ and $\tilde{f}$ solve on $[1-a,1]$ the same ODE
	\begin{equation*}
	(y^{n-2})''(x)-\lambda y^{n-1}(x) = r(x) y^{n-2}(x)
	\end{equation*}
	
	 \noindent Moreover $f(1)=\tilde{f}(1)$ and, thanks to the equality
	\begin{center}
		$\displaystyle \frac{(\ln h)'(1)}{4\sqrt{f(1)}}=\frac{(\ln\tilde{h})'(1)}{4\sqrt{\tilde{f}(1)}}$,
	\end{center} we also have $f'(1)=\tilde{f}'(1)$. Hence, the Cauchy-Lipschitz Theorem entails that $f=\tilde{f}$ on $[1-a,1]$.
	
	$\quad$
	
	\begin{remark}
	If we had assumed that $f(0)>f(1)$, we would have worked with $\big(\lambda^+(\kappa_p)\big)$ and $\big(\tilde{\lambda}^+(\kappa_p)\big)$, and found that $f=\tilde{f}$ on $[0,a]$.
	\end{remark}

	\subsection{The case ${\bf f(0)= f(1)}$}
	
	\medskip
	
	\noindent Assume, without loss of generality, that $f(0)=f(1)=1$. From Lemma \ref{asymp_vp_2}, the eigenvalues $\lambda^\pm(\kappa_m)$ satisfy the asymptotics :
	\begin{equation*}
	\left\{
	\begin{aligned}
	&\lambda^-(\kappa_m)=-N(\kappa_m)-\frac{(\ln h)'(1)}{4}+\tilde{O}\bigg(e^{-2\sqrt{\kappa_m}}\bigg)	 \\
	&\lambda^+(\kappa_m)=-M(\kappa_m)+\frac{(\ln h)'(0)}{4}++\tilde{O}\bigg(e^{-2\sqrt{\kappa_m}}\bigg).
	\end{aligned}\right.
	\end{equation*}
	
	\medskip
	
	\noindent Let us denote $\displaystyle C_0=\frac{(\ln h)'(0)}{4}$, $\displaystyle C_1=\frac{(\ln h)'(1)}{4}$, $\displaystyle \tilde{C}_0=\frac{(\ln\tilde{h})'(0)}{4}$ and $\displaystyle \tilde{C}_1=\frac{(\ln\tilde{h})'(1)}{4}$. 
	
	\medskip
	
	\noindent Using the asymptotics of $M$ and $N$ given in Theorem \ref{Simon_2} and Corollary \ref{CorSimon_2}, and the explicit expression of $\kappa_m=m(m+n-2)$, we get
	\begin{equation}
	\label{asymptvp_2}
	\left\{
	\begin{aligned}
	&\lambda^-(\kappa_m)=m+\frac{n-2}{2}-C_1+O\bigg(\frac{1}{m}\bigg)	 \\
	&\lambda^+(\kappa_m)=m+\frac{n-2}{2}+C_0+O\bigg(\frac{1}{m}\bigg).
	\end{aligned}\right.\qquad\quad m\to +\infty.
	\end{equation}
	
	\noindent Let us set also
	
	\begin{center}
	$V_m=\displaystyle \bigg\{\lambda^-(\kappa_m)-\frac{n-2}{2}\:,\:\lambda^+(\kappa_m)-\frac{n-2}{2}\bigg\}\:\:$ and $\:\:\tilde{V}_m=\displaystyle \bigg\{\tilde{\lambda}^-(\kappa_m)-\frac{n-2}{2}\:,\:\tilde{\lambda}^+(\kappa_m)-\frac{n-2}{2}\bigg\}$.
	\end{center} 

\noindent As $f$ and $\tilde{f}$ belong to $\mathcal{C}_b$, one has
\begin{equation}
	\label{girafe_2}
|C_i|\le\frac{1}{4},\quad |\tilde{C}_i|\le\frac{1}{4}	,\quad i\in\{0,1\}.
\end{equation} Hence, thanks to (\ref{asymptvp_2}) and (\ref{girafe_2}), we get for $m$ large enough:
	\begin{equation}
	\label{Sets_2}
	V_m,\tilde{V}_m\subset \bigg[m-\frac{1}{3},m+\frac{1}{3}\bigg]
	\end{equation}
	
	\noindent Of course, the assumption $\Sigma\big(\Lambda_g(\omega)\big)\underset{(\varepsilon_m)}{\asymp}\Sigma\big(\Lambda_{\tilde{g}}(\omega)\big)$ implies 
	\begin{equation}
	\label{lapin_2}
	\displaystyle -\frac{n-2}{2}+\Sigma\big(\Lambda_g(\omega)\big)\underset{(\varepsilon_m)}{\asymp}-\frac{n-2}{2}+\Sigma\big(\Lambda_{\tilde{g}}(\omega)\big)
	\end{equation}
	
	\noindent From (\ref{Sets_2}) and (\ref{lapin_2}), for each $m$ large enough, we have the alternative
	\begin{equation}
	\label{dindon_2}
	\left\{\begin{aligned}
	&\lambda^-(\kappa_m)-\tilde{\lambda}^-(\kappa_m) =O\big(e^{-2a\sqrt{\kappa_m}}\big)\\
	&\lambda^+(\kappa_m)-\tilde{\lambda}^+(\kappa_m) =O\big(e^{-2a\sqrt{\kappa_m}}\big)\end{aligned}\right.\quad\mathrm{or}\quad \left\{\begin{aligned}
	&\lambda^-(\kappa_m)-\tilde{\lambda}^+(\kappa_m) =O\big(e^{-2a\sqrt{\kappa_m}}\big)\\
	&\lambda^+(\kappa_m)-\tilde{\lambda}^-(\kappa_m) =O\big(e^{-2a\sqrt{\kappa_m}}\big)\end{aligned}\right.
	\end{equation} 
	
	\medskip
	
\noindent There is thus an infinite set $\mathcal{S}\subset\mathbb{N}$ such that either
	\begin{equation}
	\label{poule_2}
	\begin{aligned}
	\forall m\in\mathcal{S},\:\:\left\{\begin{aligned}
	&\lambda^-(\kappa_m)-\tilde{\lambda}^-(\kappa_m) =O\big(e^{-2a\sqrt{\kappa_m}}\big)\\
	&\lambda^+(\kappa_m)-\tilde{\lambda}^+(\kappa_m) =O\big(e^{-2a\sqrt{\kappa_m}}\big)\end{aligned}\right.&\qquad\mathrm{or}\\ \forall m\in\mathcal{S},&\:\:\left\{\begin{aligned}
	&\lambda^-(\kappa_m)-\tilde{\lambda}^+(\kappa_m) =O\big(e^{-2a\sqrt{\kappa_m}}\big)\\
	&\lambda^+(\kappa_m)-\tilde{\lambda}^-(\kappa_m) =O\big(e^{-2a\sqrt{\kappa_m}}\big).\end{aligned}\right.
	\end{aligned}
	\end{equation}
	
	\medskip
	
	\noindent Assume, for example, that the former is true. Then we have, using (\ref{asymptvp_2}) :
	\begin{equation}
	\label{chat_2}
	C_1=\tilde{C}_1\quad\mathrm{and}\quad C_0=\tilde{C}_0.
	\end{equation}
	
\medskip

\noindent \textit{Case 1} : $C_0\ne -C_1$.

\medskip

	\noindent Let us denote 
	\begin{center}
		$\displaystyle \delta=\frac{|C_0+C_1|}{3}\in \big]0,\frac{1}{4}\big[$.
	\end{center}
	
	$\quad$
	
	\noindent For $m$ large enough, we have, thanks to (\ref{asymptvp_2}):
	
	\begin{enumerate}
		\item[$\bullet$] $\displaystyle \lambda^-(\kappa_m)-\frac{n-2}{2}$ and $\displaystyle \tilde{\lambda}^-(\kappa_m)-\frac{n-2}{2}$ both belong to the interval $-C_1+[m-\delta,m+\delta]:=I_{m,1}$
		\item[$\bullet$] $\displaystyle \lambda^+(\kappa_m)-\frac{n-2}{2}$ and $\displaystyle \tilde{\lambda}^+(\kappa_m)-\frac{n-2}{2}$ both belong to the interval $C_0+[m-\delta,m+\delta]:=I_{m,0}$.
	\end{enumerate}

\medskip
	
	\noindent Moreover, as $C_0\ne -C_1$, we have $d(I_{m,1} , I_{m,0})\ge \delta$ for all $m$ large enough, where $\displaystyle d(I,J)=\underset{x\in I,y\in J}{\inf}|x-y|$. We can therefore associate eigenvalues as follows :
	
	\begin{equation*}
	\left\{\begin{aligned}
	&\lambda^-(\kappa_m) = \tilde{\lambda}^-(\kappa_m) +O(e^{-2a\sqrt{\kappa_m}})\\
	&\lambda^+(\kappa_m) = \tilde{\lambda}^+(\kappa_m) +O(e^{-2a\sqrt{\kappa_m}})
	\end{aligned}\right.\qquad\qquad m\to+\infty.
	\end{equation*}
	
	\medskip
	
	\noindent One shows, as in Section 3.1, that 
	\begin{equation*}
	\left\{\begin{aligned}
	&N(t^2)-\tilde{N}(t^2)=\tilde{O}(e^{-2at})\\
	&M(t^2)-\tilde{M}(t^2)=\tilde{O}(e^{-2at})
	\end{aligned}\right.\qquad t\to+\infty
	\end{equation*} and then, that 
	\begin{equation*}
	f(x)=\tilde{f}(x)\quad \forall x\in [1-a,1]\quad \mathrm{and}\quad f(x)=\tilde{f}(x)\quad \forall x\in [0,a].
	\end{equation*}
	
	$\quad$
	
	\noindent \textit{Case 2} : $C_0= -C_1$.
	
	$\quad$

		\medskip
		
		\noindent By hypothesis : $f,\tilde{f}$ belong to $\mathcal{C}_b$ so $q$ belongs to $\mathcal{D}_b$. Thanks to Lemma \ref{equiv02}, there is $j_0\in\mathbb{N}$ such that $\beta_j(0)\ne \gamma_j(0)$. Let us set 
		\begin{center}
		$j_0=\min\{j\ge 2,\: \beta_{j}(0)\ne \gamma_{j}(0) \}$.
		\end{center} 
	
	\medskip
	
	\noindent The asymptotics given by Theorem \ref{Simon_2} and Corollary (\ref{CorSimon_2}) imply 
	
	\medskip
	
	\begin{center}
		$\displaystyle \lambda^-(\kappa_m)-\lambda^+(\kappa_m)= \frac{\gamma_{j_0}-\beta_{j_0}}{m^{j_0}}+O\bigg(\frac{1}{m^{j_0+1}}\bigg)$.
	\end{center}
	
	$\quad$
	
	\noindent Because of the relation $\Sigma(\Lambda_g(\omega)) \underset{(\varepsilon_m)}{\asymp} \Sigma(\Lambda_{\tilde{g}}(\omega))$, one can show that, for the same $j_0$:
	\begin{center}
		$\displaystyle \tilde{\lambda}^-(\kappa_m)-\tilde{\lambda}^+(\kappa_m)= \frac{\tilde{\gamma}_{j_0}-\tilde{\beta}_{j_0}}{m^{j_0}}+O\bigg(\frac{1}{m^{j_0+1}}\bigg)$
	\end{center}
	
	$\quad$
	
	\noindent It is then possible to order the eigenvalues $\lambda^-(\kappa_m)$ and $\lambda^+(\kappa_m)$ (also $\tilde{\lambda}^-$ and $\tilde{\lambda}^+$ ), and this order depends on the sign of $\gamma_{j_0}-\beta_{j_0}$ (resp. $\tilde{\gamma}_{j_0}-\tilde{\beta}_{j_0}$). If $\gamma_{j_0}-\beta_{j_0}$ and $\tilde{\gamma}_{j_0}-\tilde{\beta}_{j_0}$ have the same sign, we claim that
	
	\begin{equation}
	\label{lala_2}
	\left\{ \begin{aligned}
	&\lambda^-(\kappa_m)=\tilde{\lambda}^-(\kappa_m)+\tilde{O}(e^{-2a\sqrt{\kappa_m}})\\
	&\lambda^+(\kappa_m)=\tilde{\lambda}^+(\kappa_m)+\tilde{O}(e^{-2a\sqrt{\kappa_m}}).
	\end{aligned} \right.
	\end{equation}
	
	$\quad$
	
	\noindent Indeed, if not, from (\ref{dindon_2}), there is an infinite subset $\mathcal{F}\subset \mathbb{N}$ such that : 
	
	\medskip
	
	\begin{center}
	$\displaystyle \left\{\begin{aligned}
		&\lambda^-(\kappa_m)=\tilde{\lambda}^+(\kappa_m)+\tilde{O}(e^{-2a\sqrt{\kappa_m}})\\
		&\lambda^+(\kappa_m)=\tilde{\lambda}^-(\kappa_m)+\tilde{O}(e^{-2a\sqrt{\kappa_m}}).
		\end{aligned} \right.,\qquad\quad m\to+\infty,\:\: m\in \mathcal{F}$.
	\end{center}
	
	\medskip
	
	\noindent Then
	$\lambda^-(\kappa_m)-\lambda^+(\kappa_m)=\tilde{\lambda}^+(\kappa_m)-\tilde{\lambda}^-(\kappa_m)+O(e^{-2a\sqrt{\kappa_m}})$, and letting $m$ go to infinity :
	\begin{equation*}
	\gamma_{j_0}-\beta_{j_0}= \tilde{\beta}_{j_0}-\tilde{\gamma}_{j_0}
	\end{equation*}
	
	\noindent and we have a contradiction.
	\noindent Using (\ref{lala_2}) and the same method as in Section 3.1, we find 
	
	\medskip
	
	\begin{center}
		$\displaystyle\forall t\in\mathbb{R}_+,\quad M(t^2)-\tilde{M}(t^2)=\tilde{O}(e^{-2at})\quad$ and $\quad\displaystyle N(t^2)-\tilde{N}(t^2)=\tilde{O}(e^{-2at})$
	\end{center}
	
	\noindent and at last \begin{center}
		$f=\tilde{f}$ on $[0,a]\quad$ and $\quad f=\tilde{f}$ on $[1-a,1]$.
	\end{center} 

$\quad$
	
	\noindent If $\gamma_{j_0}-\beta_{j_0}$ and $\tilde{\gamma}_{j_0}-\tilde{\beta}_{j_0}$ have opposite sign, then :
	
	\begin{equation*}
	\left\{ \begin{aligned}
	&\lambda^-(\kappa_m)=\tilde{\lambda}^+(\kappa_m)+O(e^{-2a\sqrt{\kappa_m}})\\
	&\lambda^+(\kappa_m)=\tilde{\lambda}^-(\kappa_m)+O(e^{-2a\sqrt{\kappa_m}}).
	\end{aligned} \right.
	\end{equation*}
	
\noindent In this case, one can prove that
\begin{center}
	$f=\tilde{f}\circ \eta$ on $[0,a]\quad$ and $\quad f=\tilde{f}\circ\eta$ on $[1-a,1]$.
\end{center} 

	\subsection{Special case} 
	
	\noindent When $f(0)=f(1)$, the direct implication has already been established. Now, we prove the converse in this case. Let $a\in ]0,1[$ and assume that, for example:\begin{equation*}
	\begin{aligned}
	f=\tilde{f}&\:\:\mathrm{on}\:\: [0,a]   \quad {\rm{and}}\quad  f=\tilde{f}\:\:\mathrm{on}\:\: [1-a,1] 
	\end{aligned}
	\end{equation*}
	
	$\quad$
	
	\noindent In that case, $q=\tilde{q}$ on $[0,a]$ and $q\circ\eta=\tilde{q}\circ\eta$ on $[0,a]$. But thanks to Theorem 3.1 in \cite{simon1999new}, the potential $q$ determines the function $A$ that appears in the representation (\ref{Simon_repr_2}). Hence, 
	\begin{equation*}
	\left\{\begin{aligned}
	&M(z^2)-\tilde{M}(z^2)=\tilde{O}(e^{-2az})\\
	&N(z^2)-\tilde{N}(z^2)=\tilde{O}(e^{-2az})
	\end{aligned}\right.
	\end{equation*}
	\noindent The hypothesis $(P_1)$ implies in particular that $f(0)=\tilde{f}(0)$ and $f'(0)=\tilde{f}'(0)$. From $(P_3)$ we have also $f(1)=\tilde{f}(1)$ and $f'(1)=\tilde{f}'(1)$. Using the asymptotics given by Lemma \ref{asymp_vp_2}, one deduces immediately that 
	\begin{equation*}
	\left\{\begin{aligned}
	&\lambda^+(\kappa_m)-\tilde{\lambda}^+(\kappa_m)=\tilde{O}(e^{-2a\sqrt{\kappa_m}})\\
	&\lambda^-(\kappa_m)-\tilde{\lambda}^-(\kappa_m)=\tilde{O}(e^{-2a\sqrt{\kappa_m}})
	\end{aligned}\right.\qquad m\to+\infty
	\end{equation*}
	\noindent which concludes the proof.
\end{proof}

\begin{remark}
	We emphasize that if $f(0)=f(1)$ and $\displaystyle \frac{1}{2}\le a<1$, then we have a global uniqueness result.
\end{remark}

$\quad$

\begin{corollary}
	If $f$ and $\tilde{f}$ are analytic functions on $[0,1]$ the previous local uniqueness result becomes a global uniqueness result without the additional constraint that $q,\tilde{q}\in\mathcal{D}_b$.
\end{corollary}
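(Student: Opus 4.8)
The plan is to run the argument of Theorem \ref{pseudostab_2} and then promote the equality it produces on a subinterval to an equality on all of $[0,1]$ by analytic continuation, observing moreover that for analytic data the assumption $q_f\in\mathcal D_b$ fails only in one degenerate configuration, which can be handled by hand. So let $f,\tilde f$ be real-analytic and positive on $[0,1]$, with $|f'(k)/f(k)|,|\tilde f'(k)/\tilde f(k)|\le 1/(n-2)$ for $k\in\{0,1\}$ (vacuous when $n=2$) and with $\omega\ne0$ when $n=2$, and assume $\sigma(\Lambda_g(\omega))\underset{(\varepsilon_m)}{\asymp}\sigma(\Lambda_{\tilde g}(\omega))$ for some $(\varepsilon_m)\in\mathcal E$, say $\varepsilon_m=\tilde O(e^{-2a\sqrt{\mu_m}})$ with a fixed $a\in]0,1[$; the goal is $f=\tilde f$ on $[0,1]$ or $f=\tilde f\circ\eta$ on $[0,1]$. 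The only new tool beyond Theorem \ref{pseudostab_2} is the identity theorem for real-analytic functions: since $f$, $\tilde f$ and $\tilde f\circ\eta$ are analytic on $[0,1]$, as soon as $f$ agrees with $\tilde f$ (resp. with $\tilde f\circ\eta$) on an interval of positive length, it agrees with it on all of $[0,1]$. This is exactly what turns each of the local conclusions $(P_1),\dots,(P_4)$ into its global counterpart.

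Next I would use analyticity to dispose of $\mathcal D_b$. As $f$ is analytic and positive, $q_f$ is analytic on $[0,1]$, hence so is $x\mapsto q_f(x)-q_f(1-x)$; if $q_f\notin\mathcal D_b$ then every derivative of the latter vanishes at $0$, so by the identity theorem $q_f(x)=q_f(1-x)$ on $[0,1]$. Thus either $q_f\in\mathcal D_b$ or $q_f$ is symmetric about $1/2$, and likewise for $\tilde q_f$. If $q_f,\tilde q_f\in\mathcal D_b$ then $f,\tilde f\in\mathcal C_b$, Theorem \ref{pseudostab_2} applies, and the first paragraph finishes the argument. In general, inspecting the proof of Theorem \ref{pseudostab_2} shows that $q_f\in\mathcal D_b$ enters only (i) to guarantee $A\ne0$ in the expansion $-M(\mu_m)/\sqrt{f(0)}+N(\mu_m)/\sqrt{f(1)}=A(\sqrt{\mu_m})^k+o\big((\sqrt{\mu_m})^k\big)$ underlying Lemma \ref{asymp_vp_2}, and (ii) to produce, via Lemma \ref{equiv02}, the index $j_0$ used to order the two eigenvalue families in the subcase $f(0)=f(1)$, $C_0=-C_1$. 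If $q_f$ is symmetric but $f(0)\ne f(1)$, one still has $A=1/\sqrt{f(0)}-1/\sqrt{f(1)}\ne0$, so Lemma \ref{asymp_vp_2} and the argument carried out in the case $f(0)\ne f(1)$ go through verbatim and yield, via analytic continuation, the desired conclusion; and if $q_f$ is symmetric with $f(0)=f(1)$ and $C_0\ne-C_1$, the subcase $C_0\ne-C_1$ of the proof applies as is.

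The remaining configuration, $q_f$ symmetric with $f(0)=f(1)$ and $C_0=-C_1$ (together with the mirror case obtained by exchanging $q_f$ and $\tilde q_f$), is where I expect the real difficulty. Here $M\equiv N$ identically, so recomputing the discriminant in Lemma \ref{asymp_vp_2} gives $\lambda^+(\kappa_m)-\lambda^-(\kappa_m)=\tilde O(e^{-\sqrt{\kappa_m}})$: the two families of Steklov eigenvalues merge up to an exponentially small error. Combining this with $\Sigma(\Lambda_g(\omega))\underset{(\varepsilon_m)}{\asymp}\Sigma(\Lambda_{\tilde g}(\omega))$ and the counting step of Lemma \ref{alt_2_1} (which already forces $\tilde f(0)=\tilde f(1)=f(0)=f(1)$), one obtains, after possibly replacing $\tilde f$ by $\tilde f\circ\eta$, that $M(\kappa_m)=\tilde M(\kappa_m)+\tilde O(e^{-2a'\sqrt{\kappa_m}})$ and $N(\kappa_m)=\tilde N(\kappa_m)+\tilde O(e^{-2a'\sqrt{\kappa_m}})$ for some $a'\in]0,a]$. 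The entire-function argument of the proof of Theorem \ref{pseudostab_2}, followed by Simon's local reconstruction (Theorems \ref{Simon_2} and \ref{Simon_repr_2}), then gives $q_f=\tilde q_f$ on $[0,a']\cup[1-a',1]$, hence on all of $[0,1]$ by analyticity, while matching the constant terms in the asymptotics of Lemma \ref{asymp_vp_2} gives $C_1=\tilde C_1$, so (for $n\ge3$) $f'(1)=\tilde f'(1)$ once $f(1)=\tilde f(1)$ is known. For $n\ge3$ the functions $w=f^{\frac{n-2}{4}}$ and $\tilde w=\tilde f^{\frac{n-2}{4}}$ solve the same second-order ODE $w''=\big(q_f+\omega\,w^{4/(n-2)}\big)w$ with identical Cauchy data at $x=1$, so $f=\tilde f$ on $[0,1]$ by the Cauchy--Lipschitz theorem; for $n=2$ we have $q_f=-\omega f$ with $\omega\ne0$, and $q_f=\tilde q_f$ already gives $f=\tilde f$. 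Collecting all cases proves the corollary; since $\mathcal D_b$ was used only to preclude the degenerate configuration just treated, it may be dropped from the hypotheses.
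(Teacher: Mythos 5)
Your proposal is correct and follows essentially the same route as the paper: in every configuration where $\mathcal{D}_b$ is not needed you invoke Theorem \ref{pseudostab_2} and promote $(P_1)$--$(P_4)$ to global equalities by analytic continuation, and in the residual configuration you use analyticity to get $q=q\circ\eta$, hence $M=N$ and exponentially merged eigenvalue branches, then the closeness hypothesis, Simon's representation and analytic continuation, exactly as in the paper's Subcase 2. Your treatment is in fact slightly more careful than the paper's on two points (the degenerate error rate $\tilde O(e^{-\sqrt{\kappa_m}})$ forcing the smaller exponent $a'$, and the explicit Cauchy--Lipschitz step recovering $f$ from $q_f$, including $n=2$), but these are refinements of the same argument rather than a different approach.
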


\begin{proof}
	
	\noindent In proving Theorem \ref{pseudostab_2}, we needed the hypothesis $q,\tilde{q}\in\mathcal{D}_b$ only in the case where $f(0)=f(1)$ \ and \ $C_0=-C_1$. In all other cases, without this hypothesis, one of the properties $(P_1)$, $(P_2)$, $(P_3)$ or $(P_4)$  was obtained and, as $f$ and $\tilde{f}$ are assumed to be analytic, the corresponding equalities extend over $[0,1]$ by analytic continuation. Then, only this latter case remains to be dealt with. Let us assume, without loss of generality, that $f(0)=1$.
	
	\medskip
	
	\noindent {\bf Subcase 1} : $q,\tilde{q}\in\mathcal{D}_b$ and the situation has already been studied.  
	
	\medskip
	
	\noindent {\bf Subcase 2} : $q$ or $\tilde{q}$ does not belong to $\mathcal{D}_b$. Assume, for example, that $q\notin \mathcal{D}_b$. As $f$ is analytic, so is $q$. The function $\varphi:[0,1]\mapsto \mathbb{R},\:\: x\mapsto q(x)-q(1-x)$ is also analytic and, as $q$ is not in $\mathcal{D}_b$, satisfies :
	\begin{equation*}
	\forall k\in\mathbb{N},\quad \varphi^{(k)}(0)=0.
	\end{equation*}
	
	\noindent By analytic continuation, $\varphi$ vanishes everywhere on $[0,1]$, {\it i.e} $q=q\circ\eta$ on $[0,1]$. This is equivalent to $M=N$. The eigenvalues $\lambda^\pm(\kappa_m)$ then satisfy the asymptotics
	\begin{equation*}
	\left\{
	\begin{aligned}
	&\lambda^-(\kappa_m)=M(\kappa_m)+C_0+\tilde{O}\bigg(e^{-2a\sqrt{\kappa_m}}\bigg)	 \\
	&\lambda^+(\kappa_m)=M(\kappa_m)+C_0+\tilde{O}\bigg(e^{-2a\sqrt{\kappa_m}}\bigg)
	\end{aligned}\right.
	\end{equation*}
	
	\noindent Using the assumption $\sigma(\Lambda_{g}(\omega))\underset{(\varepsilon_m)}{\asymp}\sigma(\Lambda_{\tilde{g}}(\omega))$ and the same arguments as above, we prove that $C_0=\tilde{C_0}=-\tilde{C}_1$ and
	\begin{equation*}
	\left\{\begin{aligned}
	&M(\kappa_m) = \tilde{M}(\kappa_m)+ O\big(e^{-2a\sqrt{\kappa_m}}\big)\\
	&M(\kappa_m) = \tilde{N}(\kappa_m)+ O\big(e^{-2a\sqrt{\kappa_m}}\big) \end{aligned}\right.\qquad m\to+\infty
	\end{equation*}
	
	\medskip
	
	\noindent One can show, as in the proof of Theorem \ref{pseudostab_2}, that :
	
	\begin{equation*}
	M(t^2)-\tilde{M}(t^2) = O(e^{-2at})\quad\mathrm{and}\quad M(t^2)-\tilde{N}(t^2) = O(e^{-2at}).
	\end{equation*} 
	
	\medskip
	
	\noindent Hence $f=\tilde{f}\:\: \mathrm{on}\:\: [0,a]$ and $f=\tilde{f}\circ\eta\:\: \mathrm{on}\:\: [0,a]$. By analytic continuation :
	\begin{equation*}
	f=\tilde{f}=\tilde{f}\circ \eta\:\: \mathrm{sur}\:\: [0,1].
	\end{equation*} 	
\end{proof}	

\noindent The next section is devoted to the proof of Theorem \ref{stabresult_2}.

\section{Stability estimates for symmetric conformal factors}
\subsection{Discrete estimates on Weyl-Titchmarsh functions}

\medskip

\noindent \textit{Preliminary remarks}: 

\medskip

\begin{enumerate}
	\item Until the end of the paper, we will denote by $C_A$ any constant depending only on $A$, even within the same calculation.
	\item In this section, each factor $f$ and $\tilde{f}$ is supposed so be symmetric. This simplifies many formula. However, in order to generalize our arguments as much as possible, we will use this property of symmetry only when it seems necessary and write the formulas in their generic forms. For example, we will distinguish $M$ from $N$ whereas those two functions are equal.
\end{enumerate}

$\quad$

\noindent The goal of this subsection is to prove the following result which will be useful in Subsection 4.2.

\medskip

\begin{proposition}
	\label{WT_estimates_2}
	Let $\varepsilon>0$ small enough. Assume that $\sigma(\Lambda_g(\omega))\underset{\varepsilon}{\asymp}\sigma(\Lambda_{\tilde{g}}(\omega))$. There is $C_A>0$ and $m_0\in\mathbb{N}$ (independant of $\varepsilon$) such that, for all $\displaystyle m\ge m_0$ and by setting $y_m=\sqrt{\kappa_m}$ :
	\begin{equation*}
\bigg|\bigg(M(\kappa_m)N(\kappa_m)-\frac{1}{\Delta^2(\kappa_m)}\bigg)-\bigg(\tilde{M}(\kappa_m)\tilde{N}(\kappa_m)-\frac{1}{\tilde{\Delta}^2(\kappa_m)}\bigg)\bigg| \le C_A \varepsilon\times y_m
	\end{equation*}
\end{proposition}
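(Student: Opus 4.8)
The plan is to connect the hypothesis $\sigma(\Lambda_g(\omega))\underset{\varepsilon}{\asymp}\sigma(\Lambda_{\tilde g}(\omega))$ to a quantitative estimate on the traces and determinants of the blocks $\Lambda_g^m(\omega)$, and then to extract from those the combination $M(\kappa_m)N(\kappa_m)-1/\Delta^2(\kappa_m)$, which is precisely $\det(\Lambda_g^m(\omega))$ up to lower-order (exponentially small) corrections coming from the boundary terms $C_0,C_1$. First I would invoke the bloc-diagonalization: for each $m\ge m_0$, $\Lambda_g^m(\omega)$ is a $2\times 2$ symmetric matrix whose eigenvalues are a pair among $\{\lambda^-(\kappa_m),\lambda^+(\kappa_m)\}$ (after passing to the eigenvalues of $-\Delta_{\mathbb S}$ counted without multiplicity, as in \eqref{banane_2}). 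Since by Lemma \ref{asymp_vp_2} and \eqref{Azrael_2} the two eigenvalues of the $m$-th block are, for $m$ large, separated from the eigenvalues of \emph{neighbouring} blocks by a gap of order $1$ (they live in an interval of length $O(1/m)$ around $\sqrt{\kappa_m}(1/\sqrt{f(0)})$ resp.\ $\sqrt{\kappa_m}(1/\sqrt{f(1)})$, while consecutive such intervals are $\asymp 1$ apart), the relation $\sigma(\Lambda_g(\omega))\underset{\varepsilon}{\asymp}\sigma(\Lambda_{\tilde g}(\omega))$ forces, for $\varepsilon$ small and $m\ge m_0$, that the \emph{pair} $\{\lambda^-(\kappa_m),\lambda^+(\kappa_m)\}$ is matched, as a set, with $\{\tilde\lambda^-(\kappa_m),\tilde\lambda^+(\kappa_m)\}$ up to $\varepsilon$, that is
\begin{equation*}
\{\lambda^-(\kappa_m),\lambda^+(\kappa_m)\}\underset{\varepsilon}{\asymp}\{\tilde\lambda^-(\kappa_m),\tilde\lambda^+(\kappa_m)\}.
\end{equation*}

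From this set-closeness I would pass to closeness of the symmetric functions: whichever way the two eigenvalues of the $m$-th block are paired with the two of the $\tilde{}$-block, one gets
\begin{equation*}
\big|\mathrm{Tr}(\Lambda_g^m(\omega))-\mathrm{Tr}(\Lambda_{\tilde g}^m(\omega))\big|\le 2\varepsilon,\qquad
\big|\det(\Lambda_g^m(\omega))-\det(\Lambda_{\tilde g}^m(\omega))\big|\le C\,\varepsilon\,(|\lambda^\pm(\kappa_m)|+|\tilde\lambda^\pm(\kappa_m)|),
\end{equation*}
because $\det=ab$ and $|ab-\tilde a\tilde b|\le |a||b-\tilde b|+|\tilde b||a-\tilde a|$, and the eigenvalues are $O(\sqrt{\kappa_m})=O(y_m)$ by \eqref{Azrael_2}. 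Since $f,\tilde f\in\mathcal C(A)$ the implied constants depend only on $A$, so the determinant estimate reads $|\det(\Lambda_g^m)-\det(\Lambda_{\tilde g}^m)|\le C_A\,\varepsilon\,y_m$. Next I would write out $\det(\Lambda_g^m(\omega))$ explicitly from the matrix formula for $\Lambda_g^m(\omega)$: it equals
\begin{equation*}
\det(\Lambda_g^m(\omega))=\Big(-\tfrac{M(\kappa_m)}{\sqrt{f(0)}}+C_0\Big)\Big(-\tfrac{N(\kappa_m)}{\sqrt{f(1)}}-C_1\Big)-\frac{1}{\sqrt{f(0)f(1)}}\frac{1}{\Delta^2(\kappa_m)},
\end{equation*}
using that the off-diagonal factor $h^{1/4}(1)/h^{1/4}(0)$ times its reciprocal cancels. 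Using the symmetry hypothesis $f(0)=f(1)$ (and likewise for $\tilde f$), and recalling the normalizations in Lemma \ref{asymp_vp_2}, this becomes $\tfrac{1}{f(0)}\big(M(\kappa_m)N(\kappa_m)-1/\Delta^2(\kappa_m)\big)$ plus a correction involving $C_0,C_1$ and a single factor $M$ or $N$.

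The remaining work is to control that correction term. The cross terms $C_0 N(\kappa_m)/\sqrt{f(1)}$ and $C_1 M(\kappa_m)/\sqrt{f(0)}$ are each $O(y_m)$ (since $M,N=-z+O(1/z)$ with $z=y_m$ by Theorem \ref{Simon_2} and Corollary \ref{CorSimon_2}), and the difference of two such terms for $g$ and $\tilde g$ must be shown to be $\le C_A\varepsilon\,y_m$; this is where I expect the main obstacle. The point is that $C_0=\tfrac{(\ln h)'(0)}{4\sqrt{f(0)}}$ and the analogous $\tilde C_0$ need not coincide a priori; but from the \emph{leading order} of the eigenvalue asymptotics \eqref{Azrael_2} and \eqref{canard_2} — namely $\lambda^\pm(\kappa_m)=m/\sqrt{f(0)}+\tfrac{n-2}{2\sqrt{f(0)}}\pm C_0+O(1/m)$ — the hypothesis of $\varepsilon$-closeness at all large $m$ (comparing the $O(1)$ terms of matched eigenvalues) gives $|C_0-\tilde C_0|\le C\varepsilon$ and $|C_1-\tilde C_1|\le C\varepsilon$, possibly after swapping the labels $\pm$; combined with $f,\tilde f\in\mathcal C(A)$ this makes $|C_0 M(\kappa_m)-\tilde C_0\tilde M(\kappa_m)|\le |C_0-\tilde C_0||M(\kappa_m)|+|\tilde C_0||M(\kappa_m)-\tilde M(\kappa_m)|$, and the first summand is $\le C_A\varepsilon y_m$ while the second is absorbed into the determinant difference (or, if $M-\tilde M$ is not yet controlled, bounded crudely by $C_A$ using $M,\tilde M=-y_m+O(1)$, which only produces an $O(y_m)\cdot$(something $o(1)$) — here care is needed to keep the $\varepsilon$). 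Also the $C_0C_1$ and $1/(\sqrt{f(0)f(1)})$ prefactors are bounded by $C_A$ and their $g$–$\tilde g$ differences by $C_A\varepsilon$. Assembling: multiply the determinant estimate by $f(0)$ (bounded above and below by constants depending on $A$), subtract off all the correction terms each shown to differ by $\le C_A\varepsilon y_m$, and one is left with
\begin{equation*}
\Big|\big(M(\kappa_m)N(\kappa_m)-\tfrac{1}{\Delta^2(\kappa_m)}\big)-\big(\tilde M(\kappa_m)\tilde N(\kappa_m)-\tfrac{1}{\tilde\Delta^2(\kappa_m)}\big)\Big|\le C_A\,\varepsilon\,y_m,
\end{equation*}
which is the claim. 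The delicate bookkeeping is entirely in step three: making sure every discrepancy between the two blocks — in the eigenvalues, in $C_0,C_1$, in the prefactors $1/\sqrt{f(0)}$ etc.\ — is either $O(\varepsilon)$ (for the bounded quantities) or $O(\varepsilon y_m)$ (for the ones of size $y_m$), so that nothing of size $y_m$ survives with only an $o(1)$ rather than an $O(\varepsilon)$ coefficient.
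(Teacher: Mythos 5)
Your overall strategy is the same as the paper's: pass from $\sigma(\Lambda_g)\underset{\varepsilon}{\asymp}\sigma(\Lambda_{\tilde g})$ to the set-matching $\{\lambda^-(\kappa_m),\lambda^+(\kappa_m)\}\underset{\varepsilon}{\asymp}\{\tilde\lambda^-(\kappa_m),\tilde\lambda^+(\kappa_m)\}$, then compare traces and determinants of the blocks $\Lambda_g^m(\omega)$, write out $\det(\Lambda_g^m(\omega))$ explicitly and isolate the combination $M N-1/\Delta^2$. But you leave unresolved exactly the step that carries the whole argument, and you flag it yourself: you need $|M(\kappa_m)-\tilde M(\kappa_m)|\le C_A\varepsilon$ in order to control the cross term $|\tilde C_0|\,|M(\kappa_m)-\tilde M(\kappa_m)|$, and neither of your two proposed fixes works. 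It cannot be ``absorbed into the determinant difference'': that term is part of the very decomposition of the determinant difference, so absorbing it is circular. And the crude bound $M,\tilde M=-y_m+O(1)$ only gives $|M-\tilde M|\le C_A$, which turns the cross term into $O(1)$ rather than $O(\varepsilon y_m)$, losing the factor of $\varepsilon$ entirely.

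What the paper does, and what is missing from your plan, is to extract $|M-\tilde M|\le C_A\varepsilon$ from the \emph{trace} estimate rather than the determinant. The trace of the block is
\[
\mathrm{Tr}(\Lambda_g^m(\omega))=-\frac{M(\kappa_m)}{\sqrt{f(0)}}-\frac{N(\kappa_m)}{\sqrt{f(1)}}+C_0-C_1,
\]
so after subtracting the $\tilde{}$ version and using $|\mathrm{Tr}-\widetilde{\mathrm{Tr}}|\le 2\varepsilon$ together with $|C_0-\tilde C_0|,|C_1-\tilde C_1|\le C_A\varepsilon$, one gets
\[
\bigg|\frac{1}{\sqrt{f(0)}}\big(M(\kappa_m)-\tilde M(\kappa_m)\big)+\frac{1}{\sqrt{f(1)}}\big(N(\kappa_m)-\tilde N(\kappa_m)\big)\bigg|\le C_A\varepsilon.
\]
The symmetry hypothesis is then used \emph{structurally}: $f$ symmetric gives $M=N$ and $f(0)=f(1)$, and likewise for $\tilde f$, so the display above collapses to $|M(\kappa_m)-\tilde M(\kappa_m)|\le C_A\varepsilon$. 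This is the linear-in-$\varepsilon$ control that lets you bound $|\tilde C_0|\,|M-\tilde M|\le C_A\varepsilon$, after which the remaining correction terms $(C_0-\tilde C_0)\tilde M$, etc., are $\le C_A\varepsilon\, y_m$ and you can isolate $I(\kappa_m)$. Without the symmetry the trace estimate only pins down the sum $M-\tilde M+N-\tilde N$ (up to prefactors) and does not separate the two.

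One further point that is more hand-wavy than wrong: you justify the set-matching $\{\lambda^-(\kappa_m),\lambda^+(\kappa_m)\}\underset{\varepsilon}{\asymp}\{\tilde\lambda^-(\kappa_m),\tilde\lambda^+(\kappa_m)\}$ by a ``separation of consecutive blocks'' argument. Under the standing symmetry $f(0)=f(1)$ the two eigenvalues of the $m$-th block sit in the \emph{same} band $m/\sqrt{f(0)}+O(1)$, and so does the $m$-th $\tilde{}$-block, so separation per se does not rule out a fixed integer shift $p=m+k$. The paper's Lemma \ref{prox_induite} first establishes $p=m+k$ with $k=\lfloor C\rfloor$, $C=C_0-\tilde C_0$, and then uses the cardinality clause in Definition \ref{closeness_2} to force $k=0$; this is also what yields $|C_0-\tilde C_0|\le C_A\varepsilon$ cleanly. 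Your ``comparison of $O(1)$ terms'' presupposes that $k=0$ has already been established, so the order of quantifiers needs to be fixed.
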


\begin{proof}
 We first need the following result.
	
	\begin{lemma}
		\label{coeff_2}
		Under the hypothesis $\sigma(\Lambda_{g}(\omega))\underset{\varepsilon}{\asymp}\sigma(\Lambda_{\tilde{g}}(\omega))$, and under the hypothesis that $f$ and $\tilde{f}$ are symmetric, we have :
		\begin{equation*}
		f(0)=\tilde{f}(0).
		\end{equation*}	
	\end{lemma}	

\medskip
	
\begin{proof} Using the same argument as in the proof of Lemma \ref{alt_2_1}, one proves the equality 
	\begin{equation*}
		\sqrt{f(1)} + \sqrt{f(0)} = \sqrt{\tilde{f}(1)}+\sqrt{\tilde{f}(0)}.
	\end{equation*}
	\noindent As $f$ and $\tilde{f}$ are symmetric with respect to $1/2$, we have $f(0)=f(1)$ and $\tilde{f}(0)=\tilde{f}(1)$. Hence $2f(0)=2\tilde{f}(0)$. This proves Lemma \ref{coeff_2}.
\end{proof}
	
	\begin{lemma}
		\label{prox_induite}
		For $m$ large enough, we have : \begin{equation}
		\{ \lambda^-(\kappa_m),\lambda^+(\kappa_m) \}\underset{\varepsilon}{\asymp} \{ \tilde{\lambda}^-(\kappa_m),\tilde{\lambda}^+(\kappa_m) \}
		\end{equation}
	\end{lemma}
	
	\begin{proof}
	\noindent For every $m\in\mathbb{N}$, there $p$ such that :
	
	\begin{equation}
	\label{plusmoins_2}
		|\lambda^{\pm}(\kappa_m)- \tilde{\lambda}^{\pm}(\kappa_{p}|\le \varepsilon
	\end{equation}
	
	\noindent Let us denote	
	\begin{equation*}
	C_1=\frac{1}{4\sqrt{f(1)}}\frac{h'(1)}{h(1)}\quad\mathrm{et}\quad C_0=\frac{1}{4\sqrt{f(0)}}\frac{h'(0)}{h(0)}.
	\end{equation*}	
	
	\noindent Since $f$ and $\tilde{f}$ are supposed symmetric, we have
	\begin{equation*}
	C_0=-C_1\quad\mathrm{and}\quad \tilde{C}_0=-\tilde{C}_1
	\end{equation*}
	
	$\quad$
	
	\noindent Thus, setting
	\begin{equation*}
	C=C_0-\tilde{C}_0
	\end{equation*}
\noindent one has, from Lemma \ref{asymp_vp_2} :
	
	\begin{equation*}
	\sqrt{f(0)}\bigg(\lambda^{\pm}(\kappa_m)- \tilde{\lambda}^{\pm}(\kappa_{p})\bigg) = \big(m-p\big) + C + o(1).
	\end{equation*}
	
	\noindent Let $k=\lfloor C \rfloor$. Then
	\begin{equation}
	\label{decalage_2}
	m-p+k= \sqrt{f(0)}\bigg(\lambda^{\pm}(\kappa_m)- \tilde{\lambda}^{\pm}(\kappa_{p})\bigg) + \underbrace{k- C}_{\in ]-1,0[}+o(1)
	\end{equation}
	and, as $m-p+k$ is an integer, using (\ref{plusmoins_2}), this leads, for $m$ large enough and $\varepsilon$ small enough, to
	\begin{equation*}
		p=m+k.
	\end{equation*}
	
	\noindent  Hence, for $m$ large enough, (\ref{plusmoins_2}) is equivalent to
	
\begin{equation*}
\left\{\begin{aligned}
&|\lambda^-(\kappa_m)-\tilde{\lambda}^-(\kappa_{m+k})|\le \varepsilon\\
&|\lambda^+(\kappa_m)-\tilde{\lambda}^+(\kappa_{m+k})|\le \varepsilon
\end{aligned}\right.\qquad \mathrm{or}\qquad \left\{\begin{aligned}
&|\lambda^-(\kappa_m)-\tilde{\lambda}^+(\kappa_{m+k})|\le \varepsilon\\
&|\lambda^+(\kappa_m)-\tilde{\lambda}^-(\kappa_{m+k})|\le \varepsilon
\end{aligned}\right.
\end{equation*}
	
	\medskip
	
	\noindent The relation $\Sigma\big(\Lambda_g(\omega)\big)\underset{\varepsilon}{\asymp}\Sigma\big(\Lambda_{\tilde{g}}(\omega)\big)$ implies 
	\begin{equation*}
		2m = 2(m+k)
	\end{equation*}
	
	\noindent and then $k=0$.
	
	$\quad$
	
	\noindent This means that, for $m$ greater than some $m_0$ (that does not depend on $\varepsilon$) one has
\begin{equation}
\{ \lambda^-(\kappa_m),\lambda^+(\kappa_m) \}\underset{\varepsilon}{\asymp} \{ \tilde{\lambda}^-(\kappa_m),\tilde{\lambda}^+(\kappa_m) \}
\end{equation}
		\end{proof}
	
	\noindent Of course, Lemma \ref{prox_induite} is still true by replacing $\kappa_m$ by $\mu_m$. For $m$ large enough, we have :
	\begin{equation}
	\label{proximite_induite_2}
	\{ \lambda^-(\mu_m),\lambda^+(\mu_m) \}\underset{\varepsilon}{\asymp} \{ \tilde{\lambda}^-(\mu_m),\tilde{\lambda}^+(\mu_m) \}
	\end{equation}
	
	\noindent Recall that
	\begin{equation*}
	\Lambda_g^m(\omega)=\begin{pmatrix}
	-\frac{M(\mu_m)}{\sqrt{f(0)}}+C_0&-\frac{1}{\sqrt{f(0)}}\frac{h^{1/4}(1)}{h^{1/4}(0)}\frac{1}{\Delta(\mu_m)}\\
	-\frac{1}{\sqrt{f(1)}}\frac{h^{1/4}(0)}{h^{1/4}(1)}\frac{1}{\Delta(\mu_m)}&-\frac{N(\mu_m)}{\sqrt{f(1)}}-C_1\\
	\end{pmatrix}
	\end{equation*}
	
	\noindent Hence
	\begin{equation}
	\label{Corona_2}
\begin{aligned}
\mathrm{Tr}\big(\Lambda_g^m(\omega)\big)- \mathrm{Tr}&\big(\Lambda_{\tilde{g}}^m(\omega)\big)\\ &= \bigg[-\frac{M(\mu_m)}{\sqrt{f(0)}}+C_0 -\frac{N(\mu_m)}{\sqrt{f(1)}}-C_1\bigg] - \bigg[-\frac{\tilde{M}(\mu_m)}{\sqrt{\tilde{f}(0)}}+\tilde{C}_0 -\frac{\tilde{N}(\mu_m)}{\sqrt{\tilde{f}(1)}}-\tilde{C}_1\bigg]\\
&= -\frac{1}{\sqrt{f(0)}}\bigg(M(\mu_m)-\tilde{M}(\mu_m)\bigg) -\frac{1}{\sqrt{f(1)}}\bigg(N(\mu_m)-\tilde{N}(\mu_m)\bigg)\\
&\hspace{6cm}+ (\tilde{C}_0-C_0) + (C_1-\tilde{C}_1)\\
\end{aligned}
	\end{equation}
	
\noindent Thanks to (\ref{decalage_2}), with $k=0$ and $m-p=0$, we have
\begin{equation}
\label{hypocondriaque_2}
	|C|=|\tilde{C}_0-C_0|=|C_1-\tilde{C}_1| \le C_A\varepsilon.
\end{equation}
	
\noindent Hence, combining (\ref{proximite_induite_2}), (\ref{Corona_2}) and (\ref{hypocondriaque_2}), we get :

\begin{equation*}
\begin{aligned}
\bigg|\frac{1}{\sqrt{f(0)}}\bigg(M(\mu_m)-\tilde{M}(\mu_m)\bigg) +\frac{1}{\sqrt{f(1)}}\bigg(N(\mu_m)-\tilde{N}(\mu_m)\bigg)\bigg| &\le  \underbrace{\big|\mathrm{Tr}\big(\Lambda_g^m(\omega)\big)- \mathrm{Tr}\big(\Lambda_{\tilde{g}}^m(\omega)\big)\big|}_{\le 2\varepsilon} + C_A\varepsilon\\
&\le C_A\varepsilon.
\end{aligned}
\end{equation*}

\noindent As $f$ and $\tilde{f}$ are symmetric with respect to $1/2$, this leads to
\begin{equation*}
	\bigg|M(\mu_m)-\tilde{M}(\mu_m)\bigg|\le C_A \varepsilon
\end{equation*}

\noindent We also have an estimate on the determinant. From (\ref{proximite_induite_2}), assume for example that
\begin{equation*}
	\big|\lambda^+(\mu_m)-\tilde{\lambda}^+(\mu_m)\big|\le \varepsilon\quad\mathrm{and}\quad \big|\tilde{\lambda}^-(\mu_m)-\lambda^-(\mu_m)\big|\le \varepsilon.
\end{equation*} 

\noindent Then :
\begin{equation*}
\begin{aligned}
\bigg|\det(\Lambda_g^m(\omega))-\det(\Lambda_{\tilde{g}}^m(\omega)) \bigg|&= \bigg|\lambda^-(\mu_m)\lambda^+(\mu_m)-\tilde{\lambda}^-(\mu_m)\tilde{\lambda}^+(\mu_m)\bigg|\\
&\le \big|\lambda^-(\mu_m)\big|\big|\lambda^+(\mu_m)-\tilde{\lambda}^+(\mu_m)\big|+\big|\tilde{\lambda}^-(\mu_m)-\lambda^-(\mu_m)\big|\big|\tilde{\lambda}^+(\mu_m)\big|\\
&\le C_A \varepsilon \times \sqrt{\mu_m}
\end{aligned}
\end{equation*}

\noindent We write :

\begin{equation*}
\det(\Lambda_g^m(\lambda))-\det(\Lambda_{\tilde{g}}^m(\lambda))= \mathrm{I}(\mu_m)+\mathrm{II}(\mu_m)+\mathrm{III}(\mu_m)+\mathrm{IV}
\end{equation*}

\noindent with

\begin{equation*}
\mathrm{I}(\mu_m)=\frac{1}{\sqrt{f(0)f(1)}}\bigg[\bigg(M(\mu_m)N(\mu_m)-\frac{1}{\Delta^2(\mu_m)}\bigg)-\bigg(\tilde{M}(\mu_m)\tilde{N}(\mu_m)-\frac{1}{\tilde{\Delta}^2(\mu_m)}\bigg)\bigg],
\end{equation*}

\begin{equation*}
\mathrm{II}(\mu_m)=\frac{1}{\sqrt{f(0)}}\bigg[C_1(M(\mu_m)-\tilde{M}(\mu_m))+(C_1-\tilde{C_1})\tilde{M}(\mu_m)\bigg]
\end{equation*}

\begin{equation*}
\mathrm{III}(\mu_m)=\frac{1}{\sqrt{f(1)}}\bigg[\tilde{C}_0(\tilde{N}(\mu_m)-N(\mu_m))+(\tilde{C}_0-C_0)N(\mu_m)\bigg]
\end{equation*}
and
\begin{equation*}
\mathrm{IV} = (\tilde{C}_0-C_0)\tilde{C}_1 + C_0(\tilde{C}_1-C_1)
\end{equation*}

\noindent We have:

\begin{equation*}
\begin{aligned}
|\mathrm{II}(\mu_m)| &\le \frac{1}{\sqrt{f(0)}}|C_1||M(\mu_m)-\tilde{M}(\mu_m)| + \frac{1}{\sqrt{f(0)}}|C_1-\tilde{C}_1||\tilde{M}(\mu_m)|\\
&\le C_A \varepsilon + C_A \varepsilon \sqrt{\mu_m}\\
&\le C_A \varepsilon \sqrt{\mu_m}.
\end{aligned}
\end{equation*}

\noindent Similarly:
\begin{equation*}
	|\mathrm{II}(\mu_m)|\le C_A \varepsilon \sqrt{\mu_m}\quad\mathrm{and}\quad |\mathrm{IV}|\le C_A\varepsilon.
\end{equation*}

\noindent Finally :
\begin{equation*}
\begin{aligned}
|I(\mu_m)|&\le \big|\det(\Lambda_g^m(\lambda))-\det(\Lambda_{\tilde{g}}^m(\lambda)) \big| + C_A\varepsilon\sqrt{\mu_m}\\
&\le C_A\varepsilon\sqrt{\mu_m}
\end{aligned}
\end{equation*}

\medskip

\noindent As this is true for $\mu_m$, with $m\ge m_0$ with $m_0$ not depending on $\varepsilon$, this is also true for $\kappa_m$ with $m\ge m_0$ (with $m_0$ different from the other one but still independent of $\varepsilon$). Hence, by setting $y_m=\sqrt{\kappa_m}$, we have proved that there exists $C_A>0$ such that, for $m\ge m_0$ :
\begin{equation*}
\bigg|\bigg(M(\kappa_m)N(\kappa_m)-\frac{1}{\Delta^2(\kappa_m)}\bigg)-\bigg(\tilde{M}(\kappa_m)\tilde{N}(\kappa_m)-\frac{1}{\tilde{\Delta}^2(\kappa_m)}\bigg)\bigg| \le C_A \varepsilon\times y_m
\end{equation*}
\end{proof}

\subsection{An integral estimate}

\medskip

\noindent In all this section, we will use the estimate of Proposition \ref{WT_estimates_2} in order to show that
\begin{equation*}
\|q-\tilde{q}\|_{L^2(0,1)}\le C_{A} \frac{1}{\ln\big(\frac{1}{\varepsilon}\big)}.
\end{equation*}
\noindent where $q$ is the potential defined in (\ref{ODE_2}).
$\quad$

\noindent 

$\quad$

\noindent Let us go back to the Sturm-Liouville equation
\begin{equation}
\label{eqdif}
-u''+qu=-zu,\quad z\in\mathbb{C}
\end{equation}	

\noindent and to the fundamental system of solutions $\{c_0,s_0\}$ and $\{c_1,s_1\}$ given by (\ref{solfond}). We define $\psi$ and $\phi$ as the two unique solutions of (\ref{eqdif}) that can be written as
\begin{equation}
\label{fonctions_2}
\psi(x,z)=c_0(x,z)+M(z)s_0(x,z),\quad \phi(x,z)=c_1(x,z)-N(z)s_1(x,z),
\end{equation}
with Dirichlet boundary conditions at $x=1$ and $x=0$ respectively.

$\quad$

\begin{proposition}
	\label{WT_relations_general}
	We have the following relations
	\begin{equation*}
	\begin{aligned}
		&s_0(1,z)=\Delta(z)\\
		&s_0'(1,z)=-N(z)\Delta(z)\\
		&c_0(1,z)=-M(z)\Delta(z) \\
		&c_0'(1,z)=M(z)N(z)\Delta(z)-\frac{1}{\Delta(z)}
		\end{aligned}
\qquad\quad\mathrm{and}\qquad\quad
	\begin{aligned}
		&s_1(0,z)=-\Delta(z)\\
		&s_1'(0,z)=-M(z)\Delta(z)\\
		&c_1(0,z)=-N(z)\Delta(z)\\
		&c_1'(0,z)=\frac{1}{\Delta(z)} - N(z)M(z)\Delta(z).
		\end{aligned}
	\end{equation*}
	
\end{proposition}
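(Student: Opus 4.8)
The plan is to exploit the fact that, for solutions of the equation $-u''+qu=-zu$ (which has no first-order term), the Wronskian $W(u,v)=uv'-u'v$ of any two solutions is constant in $x$. All eight identities then follow by evaluating a suitably chosen constant Wronskian at the two endpoints $x=0$ and $x=1$, where the Cauchy data \eqref{solfond} of the systems $\{c_0,s_0\}$ and $\{c_1,s_1\}$ make the expression collapse to a single term.

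First I would record four auxiliary identities. Evaluating $\Delta(z)=W(s_0,s_1)$ at $x=1$ gives $\Delta(z)=s_0(1,z)s_1'(1,z)-s_0'(1,z)s_1(1,z)=s_0(1,z)$, and at $x=0$ it gives $\Delta(z)=s_0(0,z)s_1'(0,z)-s_0'(0,z)s_1(0,z)=-s_1(0,z)$; this already yields $s_0(1,z)=\Delta(z)$ and $s_1(0,z)=-\Delta(z)$. Likewise $D(z)=W(c_0,s_1)$ evaluated at $x=0$ equals $s_1'(0,z)$ and at $x=1$ equals $c_0(1,z)$, while $E(z)=W(c_1,s_0)$ evaluated at $x=1$ equals $s_0'(1,z)$ and at $x=0$ equals $c_1(0,z)$. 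Combining these with the definitions $M=-D/\Delta$ and $N=-E/\Delta$ immediately gives $s_0'(1,z)=-N(z)\Delta(z)$, $c_0(1,z)=-M(z)\Delta(z)$, $s_1'(0,z)=-M(z)\Delta(z)$ and $c_1(0,z)=-N(z)\Delta(z)$.

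It remains to identify $c_0'(1,z)$ and $c_1'(0,z)$, for which I would use the normalized Wronskians $W(c_0,s_0)\equiv W(c_0,s_0)(0)=1$ and $W(c_1,s_1)\equiv W(c_1,s_1)(1)=1$. Evaluating the first at $x=1$ gives $c_0(1,z)s_0'(1,z)-c_0'(1,z)s_0(1,z)=1$; substituting the values already obtained, $c_0(1,z)=-M\Delta$, $s_0'(1,z)=-N\Delta$, $s_0(1,z)=\Delta$, and solving for $c_0'(1,z)$ yields $c_0'(1,z)=M(z)N(z)\Delta(z)-1/\Delta(z)$. Evaluating the second at $x=0$ gives $c_1(0,z)s_1'(0,z)-c_1'(0,z)s_1(0,z)=1$; substituting $c_1(0,z)=-N\Delta$, $s_1'(0,z)=-M\Delta$, $s_1(0,z)=-\Delta$ and solving gives $c_1'(0,z)=1/\Delta(z)-N(z)M(z)\Delta(z)$.

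There is no serious obstacle here: the proof is a bookkeeping exercise with constant Wronskians, and the only point requiring care is tracking the signs produced by the reversed Cauchy data of $\{c_1,s_1\}$ at $x=1$ as compared with $\{c_0,s_0\}$ at $x=0$. One should also keep in mind the implicit assumption $\Delta(z)\neq 0$ (i.e. $z$ outside the Dirichlet spectrum) so that division by $\Delta$ is legitimate, the relations then being understood in the sense of meromorphic functions.
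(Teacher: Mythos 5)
Your proof is correct, and it takes a slightly different route from the paper's. You work entirely with constant Wronskians: the endpoint evaluations of $\Delta=W(s_0,s_1)$, $D=W(c_0,s_1)$, $E=W(c_1,s_0)$ together with the stated identities $M=-D/\Delta$, $N=-E/\Delta$ give the six ``first-order'' relations, and the unit Wronskians $W(c_0,s_0)\equiv 1$ and $W(c_1,s_1)\equiv 1$ evaluated at the opposite endpoints then pin down $c_0'(1,z)$ and $c_1'(0,z)$. The paper instead exploits the one-dimensionality of the space of solutions vanishing at an endpoint, writing $s_0(x,z)=\Delta(z)\,\phi(x,z)$ and $s_1(x,z)=-\Delta(z)\,\psi(x,z)$ with $\psi=c_0+Ms_0$, $\phi=c_1-Ns_1$ the Weyl solutions; evaluating these proportionalities and their derivatives at the endpoints (in particular $\psi'(1,z)=-1/\Delta(z)$, and $\psi(1,z)=0$ for $c_0(1,z)$) yields the same relations. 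The two arguments are close in spirit, but yours is more self-contained in that it only needs the constancy of the Wronskian plus the already-stated formulas $M=-D/\Delta$, $N=-E/\Delta$, whereas the paper re-uses the defining property of the Weyl solutions directly (the very property from which those formulas were obtained). Your closing remark about $\Delta(z)\neq 0$ and interpreting the identities meromorphically is appropriate and matches the paper's implicit convention (it restricts to $z\notin\mathcal{P}$ when these relations are used later).
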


\begin{proof}
	
	\noindent First of all, the equalities $s_0(1,z)=\Delta(z)\:$ and $\: s_1(0,z)=-\Delta(z)$ come from the relation (\ref{charac_2}). The set of solutions of (\ref{eqdif}) that satisfy $u(0,z)=0$ is a one dimensional vector space. Therefore, there exists $A(z)\in\mathbb{C}$ such that :
\begin{equation}
\label{s1_2}
\forall x\in [0,1],\quad s_0(x,z)=A(z)\phi(x,z)
\end{equation}

\noindent The conditions on $c_1$ and $s_1$ at $x=1$ lead to the equality $A(z)=s_0(1,z)=\Delta (z)$. We get also, by differentiating $(\ref{s1_2})$:
\begin{equation*}
s_0'(1,z)= A(z)\big(c_1'(1,z)-N(z)s_1'(1,z)\big) =-N(z)A(z)=-\Delta(z)N(z).
\end{equation*}

\noindent Analogously, there is $B(z)\in\mathbb{C}$ such that
\begin{equation*}
s_1(x,z)=B(z)\psi(x,z)
\end{equation*}
and we show $B(z)=-\Delta(z)$. Hence $s_1'(1,z)=-\Delta(z)\psi'(1,z)$ and so $\displaystyle \psi'(1,z)=-\frac{1}{\Delta(z)}$. 

\noindent By differentiating $(\ref{s1_2})$ and taking $x=1$, we get:
\begin{equation*}
c_0'(1,z)+M(z)s_0'(1,z) = -\frac{1}{\Delta(z)}
\end{equation*}
and then
\begin{equation*}
c_0'(1,z) = M(z)N(z)\Delta(z)-\frac{1}{\Delta(z)}.
\end{equation*}

$\quad$

\noindent This proves the equalities on $c_0$ and $s_0$. We proceed similarly to establish those on $c_1$ and $s_1$.
\end{proof}

\medskip

\noindent Thanks to those relations, we are now able to prove the following lemma:

\medskip

\begin{lemma}
	\label{integral_relation_2}
	Denote $\mathcal{P}$ the poles of $N$. For any $z\in\mathbb{C}\backslash\mathcal{P}$ we have the equality :
	\begin{equation}
	\label{relation}
	\begin{aligned}
\tilde{\Delta}(z)\Delta(z)\bigg(M(z)N(z)-\frac{1}{\Delta(z)^2}\bigg)-M(z)\tilde{N}(z)\Delta(z)&\tilde{\Delta}(z)+1\\
&=\int_{0}^{1}\big(q(x)-\tilde{q}(x)\big)c_0(x,z)\tilde{s}_0(x,z)dx
	\end{aligned}
	\end{equation}
\end{lemma}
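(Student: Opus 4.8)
The plan is to derive the identity from the Lagrange/Green bilinear formula applied to the two Sturm--Liouville operators $-u''+qu$ and $-v''+\tilde q v$ at the same spectral parameter $-z$. Concretely, set $u(x)=c_0(x,z)$ (the solution for potential $q$ with Cauchy data $u(0)=1$, $u'(0)=0$) and $v(x)=\tilde s_0(x,z)$ (the solution for potential $\tilde q$ with $v(0)=0$, $v'(0)=1$). Since $-u''+qu=-zu$ and $-v''+\tilde v''=... $, more precisely $-v''+\tilde q v=-zv$, we multiply the first equation by $v$, the second by $u$, subtract, and integrate over $[0,1]$. The terms $-zuv$ cancel, and we obtain
\begin{equation*}
\int_0^1 (q-\tilde q)\,uv\,dx = \int_0^1 (u''v-uv'')\,dx = \big[u'v-uv'\big]_0^1 = W(u,v)(1)-W(u,v)(0),
\end{equation*}
where $W(u,v)=u'v-uv'$. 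So the whole content of the lemma is to evaluate this Wronskian at the two endpoints using the boundary data and Proposition \ref{WT_relations_general}.

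At $x=0$ we have $u(0)=c_0(0,z)=1$, $u'(0)=c_0'(0,z)=0$, $v(0)=\tilde s_0(0,z)=0$, $v'(0)=\tilde s_0'(0,z)=1$, hence $W(u,v)(0)=u'(0)v(0)-u(0)v'(0)=-1$. At $x=1$ I would use the first column of formulas in Proposition \ref{WT_relations_general}: $c_0(1,z)=-M(z)\Delta(z)$, $c_0'(1,z)=M(z)N(z)\Delta(z)-\tfrac{1}{\Delta(z)}$, and the tilded analogues $\tilde s_0(1,z)=\tilde\Delta(z)$, $\tilde s_0'(1,z)=-\tilde N(z)\tilde\Delta(z)$. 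Therefore
\begin{equation*}
W(u,v)(1)=c_0'(1,z)\tilde s_0(1,z)-c_0(1,z)\tilde s_0'(1,z)
=\Big(M N\Delta-\tfrac{1}{\Delta}\Big)\tilde\Delta-(-M\Delta)(-\tilde N\tilde\Delta),
\end{equation*}
which expands to $\tilde\Delta\Delta\big(MN-\tfrac{1}{\Delta^2}\big)-M\tilde N\Delta\tilde\Delta$. Subtracting $W(u,v)(0)=-1$ adds the $+1$, matching exactly the left-hand side of \eqref{relation}.

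The only genuinely delicate point is the domain of validity: the functions $M$, $N$, $\tilde N$, $\Delta$, $\tilde\Delta$ are meromorphic, so one must make sure the manipulation is legitimate. Since $c_0$, $\tilde s_0$, and all the Wronskian identities of Proposition \ref{WT_relations_general} are polynomial/entire in the underlying solutions and hold as identities of meromorphic functions, the Green formula computation is valid at every $z$ where all quantities are finite, i.e.\ away from the poles $\mathcal P$ of $N$ (and the zeros of $\Delta$, but note $\Delta$ vanishing corresponds precisely to poles of $M$ and $N$). I would phrase the computation first for $z$ real and large (where everything is holomorphic and the $c_i,s_i$ are honest smooth solutions), obtaining the identity there, and then invoke analytic continuation / the fact that both sides are meromorphic with the stated identity to extend it to all $z\in\mathbb{C}\setminus\mathcal P$. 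This last bookkeeping step is the main thing to be careful about; the rest is a one-line integration by parts.
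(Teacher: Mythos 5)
Your proof is correct and follows essentially the same route as the paper: the paper defines $\theta(x)=c_0\tilde s_0'-c_0'\tilde s_0$ (there is a harmless typo in its definition), observes $\theta'=(\tilde q-q)c_0\tilde s_0$, integrates, and evaluates the boundary terms via Proposition \ref{WT_relations_general}, which is precisely your Lagrange-identity/Wronskian computation with the opposite sign convention for $W$. Your extra remark about analytic continuation is fine but unnecessary, since the integration-by-parts identity and the Wronskian formulas of Proposition \ref{WT_relations_general} already hold pointwise for every $z$ at which all quantities are finite, which is exactly $z\in\mathbb{C}\setminus\mathcal{P}$.
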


\begin{proof} Let us define $\displaystyle \theta :x\mapsto c_0(x,z)\tilde{s}_0'(x,z)-s_0'(x,z)\tilde{c}_0(x,z)$. Then :
	\begin{equation*}
	\begin{aligned}
	\theta'(x)&=c_0(x,z)\tilde{s}_0''(x,z)+c_0'(x,z)\tilde{s}_0'(x,z)-c_0'(x,z)\tilde{s}_0'(x,z)-c''_0(x,z)\tilde{s}_0(x,z)\\
	&=c_0(x,z)(\tilde{q}(x)\tilde{s}_0(x,z)+z\tilde{s}_0(x,z))-(q(x)c_0(x,z)+zc_0(x,z))\tilde{s}_0(x,z)\\
	&=(\tilde{q}(x)-q(x)\big)c_0(x,z)\tilde{s}_0(x,z)
	\end{aligned}
	\end{equation*}
	
	\medskip
	
	\noindent Hence, by integrating between $0$ and $1$ :
	\begin{equation*}
	\theta(1)-\theta(0)=\int_{0}^{1}\big(q(x)-\tilde{q}(x)\big)c_0(x,z)\tilde{s}_0(x,z)dx.
	\end{equation*}
	
	\noindent By replacing $c_0'(1,z)$, $\tilde{s}_0(1,z)$, $c_0(1,z)$ and $\tilde{s}_0'(1,z)$ by the expressions given in Proposition \ref{WT_relations_general}, we get the relation of Lemma \ref{integral_relation_2}.
\end{proof}

\noindent By inverting the roles of $q$ and $\tilde{q}$, we get

	\begin{equation}
	\begin{aligned}
	\Delta(z)\tilde{\Delta}(z)\bigg(\tilde{M}(z)\tilde{N}(z)-\frac{1}{\tilde{\Delta}(z)^2}\bigg)-\tilde{M}(z)N(z)&\Delta(z)\tilde{\Delta}(z)+1\\
	&=\int_{0}^{1}\big(\tilde{q}(x)-q(x)\big)\tilde{c}_0(x,z)s_0(x,z)dx
	\end{aligned}
\end{equation}

\noindent At last, from Remark \ref{WT_symmetry}, if we replace $q(x)$ and $\tilde{q}(x)$ by $q(1-x)$ and $\tilde{q}(1-x)$, then, the roles of $M$ and $N$ are inverted. Moreover, we remark that $c_1(1-x)$ and $-s_1(1-x)$ play the roles of $c_0(x)$ and $s_0(x)$ but for the potential $q(1-x)$, \emph{i.e.} denoting $\eta(x)=1-x$:
\begin{equation*}
c_0(x,z,,q\circ\eta)=c_1(1-x,z,q)\quad\mathrm{and}\quad  s_0(x,z,,q\circ\eta)=-s_1(1-x,z,q)
\end{equation*}

\noindent Hence :
	\begin{equation}
	\begin{aligned}
\Delta(z)\tilde{\Delta}(z)\bigg(\tilde{M}(z)\tilde{N}(z)-\frac{1}{\tilde{\Delta}(z)^2}\bigg)-&\tilde{N}(z)M(z)\Delta(z)\tilde{\Delta}(z)+1\\&=-\int_{0}^{1}\big(\tilde{q}(1-x)-q(1-x)\big)\tilde{c}_1(1-x,z)s_1(1-x,z)dx\\
	\end{aligned}
\end{equation}

\noindent As $q$ is symmetric, we have $c_1(1-x)=c_0(x)$ and $s_1(1-x)=-s_0(x)$. The previous equality can be written
	\begin{equation}
\begin{aligned}
\label{integral_relation_sym_2}
\Delta(z)\tilde{\Delta}(z)\bigg(\tilde{M}(z)\tilde{N}(z)-\frac{1}{\tilde{\Delta}(z)^2}\bigg)-\tilde{N}(z)M(z)\Delta(z)&\tilde{\Delta}(z)+1\\
&=\int_{0}^{1}\big(\tilde{q}(x)-q(x)\big) \tilde{c}_0(x,z)s_0(x,z)dx\\
\end{aligned}
\end{equation}

\noindent Hence, by substracting the relation of Lemma \ref{integral_relation_2} from equality (\ref{integral_relation_sym_2}), we get

\begin{equation*}
	\begin{aligned}
\Delta(x)\tilde{\Delta}(z)\bigg[\bigg(M(z)N(z)-\frac{1}{\Delta(z)^2}\bigg)-\bigg(\tilde{M}(z)\tilde{N}(z)-\frac{1}{\tilde{\Delta}(z)^2}\bigg)\bigg]=&\int_{0}^{1}\big(q(x)-\tilde{q}(x)\big)c_0(x,z)\tilde{s}_0(x,z)dx\\
&+\int_{0}^{1}\big(q(x)-\tilde{q}(x)\big)\tilde{c}_0(x,z)s_0(x,z)dx
	\end{aligned}
\end{equation*} 

\medskip

\noindent Using Proposition \ref{WT_estimates_2}, we have proved:

\medskip

\begin{proposition}
	\label{potentiels_estimates_2}
There is $m_0\in\mathbb{N}$ such that, for $m\ge m_0$ :
\begin{equation}
\label{eq_potentiels_estimates_2}
\bigg|\int_{0}^{1}\big(q(x)-\tilde{q}(x)\big)\big[c_0(x,\kappa_m)\tilde{s}_0(x,\kappa_m)+\tilde{c}_0(x,\kappa_m)s_0(x,\kappa_m)\big]dx\bigg| \le C_A  y_m |\Delta(\kappa_m)||\tilde{\Delta}(\kappa_m)|\varepsilon
\end{equation}
\end{proposition}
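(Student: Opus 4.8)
The plan is to read the estimate directly off the algebraic identity displayed just above Proposition~\ref{potentiels_estimates_2}, combined with Proposition~\ref{WT_estimates_2}. Recall that subtracting the relation of Lemma~\ref{integral_relation_2} from~(\ref{integral_relation_sym_2}) produces, for every $z$ that is not a pole of $N$ or $\tilde N$,
\begin{equation*}
\Delta(z)\tilde{\Delta}(z)\bigg[\bigg(M(z)N(z)-\frac{1}{\Delta(z)^2}\bigg)-\bigg(\tilde{M}(z)\tilde{N}(z)-\frac{1}{\tilde{\Delta}(z)^2}\bigg)\bigg]=\int_{0}^{1}\big(q(x)-\tilde{q}(x)\big)\big[c_0(x,z)\tilde{s}_0(x,z)+\tilde{c}_0(x,z)s_0(x,z)\big]dx,
\end{equation*}
the cross terms $M\tilde N\,\Delta\tilde\Delta$ and the two constants $\pm1$ cancelling. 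So the first step is to check that one may evaluate this identity at the points $z=\kappa_m$. Since $\omega$ belongs to the Dirichlet spectrum of neither $-\Delta_g$ nor $-\Delta_{\tilde g}$, the characteristic functions $\Delta$ and $\tilde\Delta$ vanish at no $\mu_m$, hence at no $\kappa_m$; consequently $\kappa_m$ is not a pole of $N$ or $\tilde N$, and the identity is valid there for every $m\in\mathbb{N}$.

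Next I would take moduli on both sides at $z=\kappa_m$ and apply Proposition~\ref{WT_estimates_2}, which furnishes an integer $m_0$, independent of $\varepsilon$, such that for every $m\ge m_0$, with $y_m=\sqrt{\kappa_m}$,
\begin{equation*}
\bigg|\bigg(M(\kappa_m)N(\kappa_m)-\frac{1}{\Delta^2(\kappa_m)}\bigg)-\bigg(\tilde{M}(\kappa_m)\tilde{N}(\kappa_m)-\frac{1}{\tilde{\Delta}^2(\kappa_m)}\bigg)\bigg|\le C_A\,\varepsilon\, y_m.
\end{equation*}
Multiplying this bound by $|\Delta(\kappa_m)|\,|\tilde{\Delta}(\kappa_m)|$ and identifying the left-hand side of the identity with this product yields
\begin{equation*}
\bigg|\int_{0}^{1}\big(q(x)-\tilde{q}(x)\big)\big[c_0(x,\kappa_m)\tilde{s}_0(x,\kappa_m)+\tilde{c}_0(x,\kappa_m)s_0(x,\kappa_m)\big]dx\bigg|\le C_A\, y_m\,|\Delta(\kappa_m)|\,|\tilde{\Delta}(\kappa_m)|\,\varepsilon
\end{equation*}
for all $m\ge m_0$, which is precisely~(\ref{eq_potentiels_estimates_2}).

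I do not expect any genuine obstacle: the proposition is an immediate corollary of the material already assembled, and the argument is pure bookkeeping. The only point that warrants a line of comment is the one isolated above, namely that the spectral points $\kappa_m$ avoid the poles of the Weyl--Titchmarsh functions — guaranteed by the standing hypothesis on $\omega$ — so that the integral identity is licit at $z=\kappa_m$; one could, if desired, simply enlarge $m_0$ at that stage, though in fact $\Delta(\kappa_m)$ and $\tilde\Delta(\kappa_m)$ are nonzero for every $m$.
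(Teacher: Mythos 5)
Your proof is correct and follows exactly the paper's route: subtract the two integral identities (Lemma \ref{integral_relation_2} and \eqref{integral_relation_sym_2}), note the cancellation of the cross terms and the constants, evaluate at $z=\kappa_m$, and invoke Proposition \ref{WT_estimates_2}. The paper presents this by simply displaying the subtracted identity and writing ``Using Proposition \ref{WT_estimates_2}, we have proved,'' so the content is the same; the only thing you add is the (correct and worthwhile) sanity check that $\kappa_m\notin\mathcal{P}$, which holds because $\omega$ lies outside the Dirichlet spectra and hence $\Delta(\mu_m),\tilde\Delta(\mu_m)\neq 0$ for every $m$.
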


$\quad$

\subsection{Construction of an inverse integral operator}

\medskip

\noindent From now on, we set $L(x)=q(x)-\tilde{q}(x)$. We want to express the integrand in the left-hand-side of (\ref{eq_potentiels_estimates_2}) in terms of an operator acting on $L$.

\medskip

\begin{proposition}
	\label{operator_exist_estimate_2}
	There is an operator $B:L^2([0,1]) \to L^2([0,1])$ such that :
	\begin{enumerate}
		\item For all $m\in\mathbb{N}$, \begin{center}
			$\displaystyle 
			\int_{0}^{1}\big[c_0(x,\kappa_m)\tilde{s}_0(x,\kappa_m)+\tilde{c}_0(x,\kappa_m)s_0(x,\kappa_m)\big]L(x)dx=\frac{1}{y_m}\int_{0}^{1}\sinh(2\tau y_m)BL(\tau)d\tau.$
		\end{center}
		\item The function $\tau\mapsto BL(\tau)$ is $C^1$ on $[0,1]$ and $BL$ and $(BL)'$ are uniformly bounded by a constant $C_A$.
	\end{enumerate}
\end{proposition}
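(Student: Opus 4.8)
The plan is to insert the classical transformation–operator representations of the fundamental solutions and then linearise the products by the addition formulas for $\cosh$ and $\sinh$. Recall that, writing $y=\sqrt z$, there are kernels $K_c,K_s$ depending only on $q$ (and not on $z$) such that
\[
c_0(x,z)=\cosh(xy)+\int_0^x K_c(x,t)\cosh(ty)\,dt,\qquad s_0(x,z)=\frac{\sinh(xy)}{y}+\int_0^x K_s(x,t)\frac{\sinh(ty)}{y}\,dt,
\]
and likewise $\tilde c_0,\tilde s_0$ with kernels $\tilde K_c,\tilde K_s$ built from $\tilde q$; the standard a priori estimates on these kernels and their first derivatives are controlled by $\|q\|_{L^\infty}+\|\tilde q\|_{L^\infty}$, hence --- since $f,\tilde f\in\mathcal C(A)$ --- by $A$.

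First I would substitute these four representations into $c_0\tilde s_0+\tilde c_0 s_0$, expand, and replace every product $\cosh(ay)\,\tfrac{\sinh(by)}{y}$ by $\tfrac12\big(\tfrac{\sinh((a+b)y)}{y}+\tfrac{\sinh((b-a)y)}{y}\big)$. Each resulting term then carries exactly one factor $1/y$ and is of the form $\tfrac12\,y^{-1}\sinh(\xi y)$ with $\xi$ affine in the integration variables and lying in $[-1,2]$; since $\sinh$ is odd one reduces to $\xi\in[0,2]$. Setting $\xi=2\tau$ and applying Fubini to bring the $x$--integration against $L$ to the inside yields exactly the identity in item (1), and this \emph{defines} $B$: its Schwartz kernel is the sum of all the pieces produced by the expansion, namely a diagonal contribution $\delta(x-\tau)$ coming from the leading products $\cosh(xy)\tfrac{\sinh(xy)}{y}=\tfrac12\tfrac{\sinh(2xy)}{y}$, plus a genuine integral kernel $\rho(x,\tau)$ assembled from $K_c,K_s,\tilde K_c,\tilde K_s$ and the Jacobians of the substitutions, supported in the relevant triangular regions. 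Thus $B=\mathrm{Id}+\mathcal R$, where $\mathcal R$ is the integral operator with kernel $\rho$, and $\rho$ is $C^1$ in $\tau$ with $C^1$--norm bounded by $C_A$.

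From $B=\mathrm{Id}+\mathcal R$ it is immediate that $B$ is bounded on $L^2([0,1])$. For item (2) one uses that $L=q-\tilde q$ has the regularity forced by $f,\tilde f\in\mathcal C(A)$: then $\mathcal RL(\tau)=\int_0^1\rho(x,\tau)L(x)\,dx$ is $C^1$, and differentiating in $\tau$ --- which acts only on the smooth factor $\rho$ and on the moving endpoints of the triangular supports --- gives $\|\mathcal RL\|_\infty+\|(\mathcal RL)'\|_\infty\le C_A$. Together with the bounds on $L$ itself, $BL=L+\mathcal RL$ has the announced regularity and the uniform bound $\|BL\|_\infty+\|(BL)'\|_\infty\le C_A$; the symmetry of $f$ and $\tilde f$ enters only to identify $M$ with $N$ and thereby simplify $\rho$, so the computation can be carried out in generic form and specialised at the end.

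The difficulty is organisational rather than conceptual: expanding $c_0\tilde s_0+\tilde c_0 s_0$ produces about a dozen terms, each supported on its own triangle, and one must follow carefully the change of variables $\xi=2\tau$ and the resulting domains --- this is precisely what makes $\rho$ only piecewise smooth and what forces one to keep track of the boundary contributions when differentiating in $\tau$. The uniform $C^1$ bounds then reduce to the classical a priori estimates on transformation–operator kernels (of the kind used in \cite{simon1999new}), evaluated with $\|q\|_\infty,\|\tilde q\|_\infty\le C_A$.
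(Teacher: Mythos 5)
Your proposal follows essentially the same route as the paper: insert the transformation--operator (Gel'fand--Levitan/Marchenko) representations of $c_0,s_0,\tilde c_0,\tilde s_0$, linearise each product $\cosh\cdot\sinh/y$ by the addition formula, change variables so that every term is a $\sinh(2\tau y_m)$ moment, apply Fubini to read off the kernel of $B=\mathrm{Id}+\mathcal R$, and invoke Marchenko's a priori bounds (as in Proposition \ref{noyau_estim_2}) to control $\mathcal R$ and its $\tau$-derivative. The paper carries out the bookkeeping explicitly --- splitting into the pieces $\mathrm I_0,\dots,\mathrm{IV}_0$, exploiting the even/odd parity of the kernels $P$ and $H$ in the second variable, and writing $B=Q+R$ --- but the decomposition, the use of oddness of $\sinh$ to fold $\xi\in[-1,0]$ onto $[0,1]$, and the way the $\delta(x-\tau)$ part emerges from the leading term are exactly as you describe, so the argument matches in substance.
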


\begin{proof}

\noindent Let us extend on $[-1,0]$ $q$ and $\tilde{q}$ into even functions. From \cite{marchenko2011sturm} (page 9) we have the following integral representations of the functions $c_0$ and $s_0$:

\begin{equation*}
\begin{aligned}
&s_0(x,-z^2)=\frac{\sin(z x)}{z}+\int_{0}^{x}H(x,t)\frac{\sin(z t)}{z}dt\\
&c_0(x,-z^2)= \cos(zx) + \int_{0}^{x}P(x,t)\cos(z t)dt
\end{aligned}
\end{equation*}

\noindent where $H(x,t)$ and $P(x,t)$ can be written as 
\begin{equation}
\label{tulipe_2}
\begin{aligned}
&H(x,t)=K(x,t)-K(x,-t)\\
&P(x,t)=K(x,t)+K(x,-t)
\end{aligned}
\end{equation}
\noindent  with $K$ a $C^1$ function on $[-1,1]\times [-1,1]$ satisfying some good estimates. More precisely (\cite{marchenko2011sturm}, p.14), we have:

\begin{theorem}
	On $[-1,1]\times [-1,1]$, $K$ satisfies the estimate
	\begin{equation*}
	|K(x,t)|\le \frac{1}{2}w\bigg(\frac{x+t}{2}\bigg)\exp\bigg(\sigma_1(x)-\sigma_1\bigg(\frac{x+t}{2}\bigg)-\sigma_1\bigg(\frac{x-t}{2}\bigg)\bigg)
	\end{equation*}
\end{theorem}

\noindent with $\displaystyle w(u)=\underset{0\le \xi\le u}{\max}\bigg|\int_{0}^{\xi}q(y)dy\bigg|,\quad$ $\displaystyle\quad \sigma_0(x)=\int_{0}^{x}|q(t)|dt,\quad$ $\displaystyle\quad \sigma_1(x)=\int_{0}^{x}\sigma_0(t)dt$.

$\quad$

$\quad$

\noindent We thus have the following estimate :

\medskip

\begin{proposition}
	\label{noyau_estim_2}
There is a constant $C_A>0$, which only depends on $A$, such that
	\begin{equation*}
		\|K\|_\infty + \bigg\|\frac{\partial K}{\partial x}\bigg\|_\infty+ \bigg\|\frac{\partial K}{\partial t}\bigg\|_\infty \le C_A.
	\end{equation*}
\end{proposition}

\begin{proof}
	Since $f\in C(A)$, the potential $q$ is bounded by a constant that only depends on $A$, so are $\sigma_0$, $\sigma_1$, $w$ and $K$. Denote $J(u,v)=K(u+v,u-v)$. Then $J$ is uniformly bounded by $C_A$ and moreover (cf \cite{marchenko2011sturm}, p. 14 and 16), one has the equalities :
	\begin{equation*}
	\left\{\begin{aligned}
	& \frac{\partial J(u,v)}{\partial u}=\frac{1}{2}q(u)+\int_{0}^{v}q(u+\beta)J(u,\beta)d\beta \\
	&\frac{\partial J(u,v)}{\partial v}=\int_{0}^{u}q(v+\alpha)J(\alpha,v)d\beta \
	\end{aligned}\right.
	\end{equation*}
	\noindent We deduce that the partial derivative of $J$ are uniformly bounded by $C_A$. Returning to the $(x,t)$ coordinates, the conclusion of Proposition \ref{noyau_estim_2} follows.
\end{proof}

$\quad$

\noindent For $z=i\sqrt{\kappa_m}=:iy_m$, we have thus :

\begin{equation*}
\begin{aligned}
&s_0(x,\kappa_m)=\frac{\sinh(y_m x)}{y_m}+\int_{0}^{x}H(x,t)\frac{\sinh(y_mt)}{y_m}dt\\
&c_0(x,\kappa_m)=\cosh(y_m x)+\int_{0}^{x}H(x,t)\cosh(y_mt)dt
\end{aligned}
\end{equation*}

$\quad$

\noindent We will take advantage of this representation to write the estimates of Proposition \ref{potentiels_estimates_2} as an integral estimate. 

\medskip

\noindent 	\noindent We have
\begin{equation*}
\begin{aligned}
\int_{0}^{1}L(x)s_0(x)\tilde{c}_0(x)dx&=\int_{0}^{1}L(x)\bigg[\frac{\sinh(y_m x)}{y_m}+\int_{0}^{x}H(x,t)\frac{\sinh(y_m t)}{y_m}dt\bigg]\times\\
&\hspace{4cm}\bigg[\cosh( y_m x)+\int_{0}^{x}\tilde{P}(x,u)\cosh(y_m u)du\bigg]dx\\	
&=\mathrm{I}_0+\mathrm{II}_0+\mathrm{III}_0+\mathrm{IV}_0,
\end{aligned}
\end{equation*}

\noindent with

\medskip

\begin{enumerate}
	\item[$\bullet$] $\displaystyle \mathrm{I}_0=\int_{0}^{1}L(x)\frac{\sinh(y_m x)\cosh(y_mx)}{y_m}dx$
	
	\item[$\bullet$] $\displaystyle \mathrm{II}_0=\int_{0}^{1}L(x)\bigg[\int_{0}^{x}\tilde{P}(x,u)\frac{\sinh(y_m x)\cosh(y_m u)}{y_m}du\bigg]dx$
	
	\item[$\bullet$] $\displaystyle \mathrm{III}_0 =\int_{0}^{1}L(x)\bigg[\int_{0}^{x}H(x,t)\frac{\sinh(y_m t)\cosh(y_m x)}{y_m}dt\bigg]dx$
	
	\item[$\bullet$] $\displaystyle \mathrm{IV}_0 =\int_{0}^{1}L(x)\bigg[\int_{0}^{x}\int_{0}^{x}\tilde{P}(x,u)H(x,t)\frac{\sinh(y_m t)\cosh(y_m u)}{y_m}du\:dt\bigg]dx$
\end{enumerate}

$\quad$

\noindent Let us compute those four quantities independently.

\medskip

\begin{center}
	$\displaystyle \mathrm{I}_0=\int_{0}^{1}L(x)\frac{\sinh(y_m x)\cosh(y_mx)}{y_m}dx=\frac{1}{2y_m}\int_{0}^{1}\sinh(2xy_m)L(x)dx$.
\end{center}

$\quad$

\begin{equation*}
\begin{aligned}
\mathrm{II}_0&=\int_{0}^{1}L(x)\bigg[\int_{0}^{x}\tilde{P}(x,u)\frac{\sinh(y_m x)\cosh(y_m u)}{y_m}du\bigg]dx\\&=\frac{1}{y_m}\int_{0}^{1}L(x)\bigg[\int_{0}^{x}\tilde{P}(x,u)\frac{\sinh(y_m (x+u))+\sinh(y_m (x-u))}{2}du\bigg]dx\\
&=\frac{1}{2y_m}\int_{0}^{1}L(x)\bigg[\int_{0}^{x}\tilde{P}(x,u)\sinh(y_m (x+u))du+\int_{0}^{x}\tilde{P}(x,u)\sinh(y_m (x-u))du\bigg]dx\\
&=\frac{1}{y_m}\int_{0}^{1}L(x)\bigg[\int_{\frac{x}{2}}^{x}\tilde{P}(x,2\tau-x)\sinh(2 \tau y_m)d\tau+\int_{0}^{\frac{x}{2}}\tilde{P}(x,x-2\tau)\sinh(2\tau y_m)d\tau\bigg]dx\\
&=\frac{1}{y_m}\int_{0}^{1}\sinh(2 \tau y_m)\bigg[\int_{\tau}^{2\tau}\tilde{P}(x,2\tau-x)L(x)dx+\int_{2\tau}^{1}\tilde{P}(x,x-2\tau)L(x)dx\bigg]d\tau
\end{aligned}
\end{equation*}

$\quad$ 

\noindent But, for all $(x,\tau)$ in $\mathbb{R}^2$, we have $\tilde{P}(x,x-2\tau)=\tilde{P}(x,2\tau-x)$. Then

\begin{equation*}
\begin{aligned}
\mathrm{II}_0&=\frac{1}{y_m}\int_{0}^{1}\sinh(2 \tau y_m)\bigg[\int_{\tau}^{1}\tilde{P}(x,2\tau-x)L(x)dx\bigg]d\tau\\
\end{aligned}
\end{equation*}

\noindent Let us compute $\mathrm{III}_0$ :

\begin{equation*}
\begin{aligned}
\displaystyle \mathrm{III}_0 &=\int_{0}^{1}L(x)\bigg[\int_{0}^{x}H(x,t)\frac{\sinh(y_m t)\cosh(y_m x)}{y_m}dt\bigg]dx\\
&=\frac{1}{2y_m}\int_{0}^{1}L(x)\bigg[\int_{0}^{x}H(x,t)\sinh(y_m (t+x))dt+\int_{0}^{x}H(x,t)\sinh(y_m (t-x))dt\bigg]dx\\
&=\frac{1}{y_m}\int_{0}^{1}L(x)\bigg[\int_{\frac{x}{2}}^{x}H(x,2\tau-x)\sinh(2\tau y_m)d\tau+\int_{-\frac{x}{2}}^{0}H(x,2\tau +x)\sinh(2\tau y_m)d\tau\bigg]dx\\
\end{aligned}
\end{equation*}

\noindent By changing $\tau$ in $-\tau$, we get
\begin{equation*}
\begin{aligned}
\displaystyle \mathrm{III}_0 &=   \frac{1}{y_m}\int_{0}^{1}L(x)\bigg[\int_{\frac{x}{2}}^{x}H(x,2\tau-x)\sinh(2\tau y_m)dt+\int_{0}^{\frac{x}{2}}H(x,-2\tau +x)\sinh(-2\tau y_m)dt\bigg]dx\\
&=\frac{1}{y_m}\int_{0}^{1}L(x)\bigg[\int_{\frac{x}{2}}^{x}H(x,2\tau-x)\sinh(2\tau y_m)d\tau-\int_{0}^{\frac{x}{2}}H(x,-2\tau +x)\sinh(2\tau y_m)d\tau\bigg]dx\\
\end{aligned}
\end{equation*}

$\quad$

\noindent As $H$ is odd with respect to the second variable :
\begin{equation*}
\begin{aligned}
\displaystyle \mathrm{III}_0&=\frac{1}{y_m}\int_{0}^{1}L(x)\bigg[\int_{0}^{x}H(x,2\tau-x)\sinh(2\tau y_m)d\tau\bigg]dx\\
&=\frac{1}{y_m}\int_{0}^{1}\sinh(2\tau y_m)\bigg[\int_{\tau}^{1}H(x,2\tau-x)L(x)dx\bigg]d\tau
\end{aligned}
\end{equation*} 

\noindent At last :

\begin{equation*}
\begin{aligned}
\mathrm{IV}_0 &=\frac{1}{2y_m}\int_{0}^{1}L(x)\bigg[\int_{0}^{x}\int_{0}^{x}\tilde{P}(x,u)H(x,t)\big(\sinh(y_m (t+u))+\sinh(y_m (t-u))\big)du\:dt\bigg]dx\\
&=\mathrm{IV}_0(1) + \mathrm{IV}_0(2)
\end{aligned}
\end{equation*}

\noindent where
\begin{equation*}
\begin{aligned}
\mathrm{IV}_0(1)&=\frac{1}{2y_m}\int_{0}^{1}L(x)\int_{0}^{x}\int_{0}^{x}\tilde{P}(x,u)H(x,t)\sinh(y_m (t+u))dudtdx\\
&=\frac{1}{2y_m}\int_{0}^{1}L(x)\int_{0}^{1}{\bf 1}_{[0,x]}(t)\int_{\frac{t}{2}}^{\frac{x+t}{2}}2\tilde{P}(x,2\tau -t)H(x,t)\sinh(2\tau y_m)d\tau dtdx\\
&=\frac{1}{y_m}\int_{0}^{1}L(x)\int_{0}^{1}\sinh(2\tau y_m){\bf 1}_{[0,x]}(\tau)\int_{2\tau-x}^{2\tau}\tilde{P}(x,2\tau -t)H(x,t){\bf 1}_{[0,x]}(t)dtd\tau dx\\
&=\frac{1}{y_m}\int_{0}^{1}\sinh(2\tau y_m)\int_{\tau}^{1}L(x)\int_{2\tau-x}^{2\tau}\tilde{P}(x,2\tau -t)H(x,t){\bf 1}_{[0,x]}(t)dtdxd\tau\\
\end{aligned}
\end{equation*}

\noindent and

\begin{equation*}
\begin{aligned}
\mathrm{IV}_0(2)&=\frac{1}{2y_m}\int_{0}^{1}L(x)\int_{0}^{x}\int_{0}^{x}\tilde{P}(x,u)H(x,t)\sinh(y_m (t-u))du\:dtdx\\
&=\frac{1}{2y_m}\int_{0}^{1}L(x)\int_{0}^{1}{\bf 1}_{[0,x]}(t)\int_{\frac{t-x}{2}}^{\frac{t}{2}}2\tilde{P}(x,t-2\tau)H(x,t)\sinh(2\tau y_m)d\tau\:dtdx\\
&=\frac{1}{y_m}\int_{0}^{1}L(x)\int_{-1}^{1}\sinh(2\tau y_m){\bf 1}_{[-x,x]}(2\tau)\int_{2\tau}^{2\tau +x}\tilde{P}(x,t-2\tau)H(x,t){\bf 1}_{[0,x]}(t)\:dtd\tau dx\\
&=\frac{1}{y_m}\int_{-1}^{1}\sinh(2\tau y_m)\int_{2|\tau|}^{1}L(x)\int_{2\tau}^{2\tau +x}\tilde{P}(x,t-2\tau)H(x,t){\bf 1}_{[0,x]}(t)\:dtdxd\tau\\
&=\mathrm{IV}_0(2,1)+\mathrm{IV}_0(2,2)
\end{aligned}
\end{equation*}

\noindent with

\begin{equation*}
\begin{aligned}
\mathrm{IV}_0(2,1)&=\frac{1}{y_m}\int_{-1}^{0}\sinh(2\tau y_m)\int_{-2\tau}^{1}L(x)\int_{2\tau}^{2\tau +x}\tilde{P}(x,t-2\tau)H(x,t){\bf 1}_{[0,x]}(t)\:dtdxd\tau\\
&=\frac{1}{y_m}\int_{0}^{1}\sinh(2\tau y_m)\int_{2\tau}^{1}L(x)\int_{-2\tau}^{-2\tau +x}\tilde{P}(x,t+2\tau)H(x,t){\bf 1}_{[0,x]}(t)\:dtdxd\tau\\
&=\frac{1}{y_m}\int_{0}^{1}\sinh(2\tau y_m)\int_{2\tau}^{1}L(x)\int_{0}^{-2\tau +x}\tilde{P}(x,t+2\tau)H(x,t)\:dtdxd\tau\\
&=\frac{1}{y_m}\int_{0}^{1}\sinh(2\tau y_m)\int_{2\tau}^{1}L(x)\int_{2\tau}^{x}\tilde{P}(x,t)H(x,t-2\tau)\:dtdxd\tau
\end{aligned}
\end{equation*}

\noindent and

\begin{equation*}
\begin{aligned}
\mathrm{IV}_0(2,2)&=\frac{1}{y_m}\int_{0}^{1}\sinh(2\tau y_m)\int_{2\tau}^{1}L(x)\int_{2\tau}^{2\tau +x}\tilde{P}(x,t-2\tau)H(x,t){\bf 1}_{[0,x]}(t)\:dtdxd\tau\\
&=\frac{1}{y_m}\int_{0}^{1}\sinh(2\tau y_m)\int_{2\tau}^{1}L(x)\int_{2\tau}^{x}\tilde{P}(x,t-2\tau)H(x,t)\:dtdxd\tau\\
\end{aligned}
\end{equation*}

\noindent Finally :

\begin{equation*}
\int_{0}^{1}s_0(x)\tilde{c}_0(x)L(x)dx = \frac{1}{y_m}\int_{0}^{1}\sinh(2\tau y_m)QL(\tau)d\tau
\end{equation*}
with
\begin{equation*}
\begin{aligned}
QL(\tau) =  \frac{1}{2}L(\tau)&+\int_{\tau}^{1}\tilde{P}(x,2\tau-x)L(x)dx+\int_{\tau}^{1}H(x,2\tau-x)L(x)dx\\
&+\int_{\tau}^{1}L(x)\int_{2\tau-x}^{2\tau}\tilde{P}(x,2\tau -t)H(x,t){\bf 1}_{[0,x]}(t)dtdx\\
&+\int_{2\tau}^{1}L(x)\int_{2\tau}^{x}\tilde{P}(x,t)H(x,t-2\tau)\:dtdx\\
&+\int_{2\tau}^{1}L(x)\int_{2\tau}^{x}\tilde{P}(x,t-2\tau)H(x,t)\:dtdx
\end{aligned}
\end{equation*}

$\quad$

\noindent Similarly, inverting the $\:\tilde{}\:$, we construct as well an operator $R:L^2(0,1)\to L^2(0,1)$ such that
\begin{equation*}
\int_{0}^{1}\tilde{s}_0(x)c_0(x)L(x)dx = \frac{1}{y_m}\int_{0}^{1}\sinh(2\tau y_m)RL(\tau)d\tau
\end{equation*}
with
\begin{equation*}
\begin{aligned}
RL(\tau) =  \frac{1}{2}L(\tau)&+\int_{\tau}^{1}P(x,2\tau-x)L(x)dx+\int_{\tau}^{1}\tilde{H}(x,2\tau-x)L(x)dx\\
&+\int_{\tau}^{1}L(x)\int_{2\tau-x}^{2\tau}P(x,2\tau -t)\tilde{H}(x,t){\bf 1}_{[0,x]}(t)dtdx\\
&+\int_{2\tau}^{1}L(x)\int_{2\tau}^{x}P(x,t)\tilde{H}(x,t-2\tau)\:dtdx\\
&+\int_{2\tau}^{1}L(x)\int_{2\tau}^{x}P(x,t-2\tau)\tilde{H}(x,t)\:dtdx
\end{aligned}
\end{equation*}

$\quad$

\noindent Let us denote $B=Q+R$. Then
\begin{equation*}
	\begin{aligned}
	\int_{0}^{1}\big[c_0(x,z)\tilde{s}_0(x,z)+\tilde{c}_0(x,z)s_0(x,z)\big]L(x)dx&=\frac{1}{y_m}\int_{0}^{1}\sinh(2\tau y_m)(R+Q)L(\tau)d\tau\\
	&=\frac{1}{y_m}\int_{0}^{1}\sinh(2\tau y_m)BL(\tau)d\tau.
	\end{aligned}
\end{equation*}

	\noindent Now, let us prove the second part of the proposition. As the conformal factors $f$ and $\tilde{f}$ belong to $C(A)$, and thanks to Proposition \ref{noyau_estim_2}, we know that $H$ and $\tilde{H}$ are $C^1$ and uniformly bounded by a constant $C_A$ (and also are their partial derivatives). Moreover, it is known that, for a function $g$ that is $C^1$ on $[0,1]$, for any $a\in ]0,1[$ the function $G_a$ defined as $\displaystyle G_a(\tau)= \int_{a}^{\tau}g(\tau,x)dx$ is also $C^1$ and its derivative is
\begin{equation*}
G_a'(\tau)=\int_{a}^{\tau}\frac{\partial g}{\partial \tau}(\tau,x)dx + g(\tau,\tau).
\end{equation*} Hence $BL$ and its derivative are also bounded by some constant $C_A$.
\end{proof}

$\quad$

\noindent Thus, we have obtained:
\begin{equation}
\label{estimation_2}
\bigg|\frac{1}{y_m^2}\int_{0}^{1}\sinh(2\tau y_m)BL(\tau)d\tau\bigg| \le C_A \varepsilon\times  \Delta(\kappa_m)\tilde{\Delta}(\kappa_m)
\end{equation}

\noindent Moreover
\begin{equation*}
\begin{aligned}
y_m^2 e^{-2ym}\times \frac{1}{y_m^2}\int_{0}^{1}\sinh(2\tau y_m)BL(\tau)d\tau &= \frac{1}{2}\bigg[e^{-2ym}\int_{0}^{1}e^{2\tau y_m}BL(\tau)d\tau + e^{-2ym}\int_{0}^{1}e^{-2\tau y_m}BL(\tau)d\tau\bigg]\\
&=\frac{1}{2}\bigg[\int_{0}^{1}e^{2(\tau-1) y_m}BL(\tau)d\tau + \int_{0}^{1}e^{-2(\tau+1) y_m}BL(\tau)d\tau\bigg]\\
&=\frac{1}{2}\bigg[\int_{0}^{1}e^{-2\tau y_m}BL(1-\tau)d\tau + \int_{1}^{2}e^{-2\tau y_m}BL(\tau-1)d\tau\bigg]
\end{aligned}
\end{equation*}
and so, by multiplying (\ref{estimation_2}) by $y_m^2e^{-2y_m}$, one gets, for $m\ge m_0$ :
\begin{equation*}
\begin{aligned}
\bigg|\int_{0}^{+\infty}e^{-2\tau y_m}\bigg(BL(1-\tau){\bf 1}_{[0,1]}(\tau) + BL(\tau-1){\bf 1}_{[1,2]}(\tau)\bigg)d\tau\bigg| &\le C_A \varepsilon\times \big[y_m^2 e^{-2ym}\Delta(\kappa_m)\tilde{\Delta}(\kappa_m)\big]\\
&\le C_A \varepsilon.
\end{aligned}
\end{equation*}

\subsection{A Müntz approximation theorem}

\subsubsection{A Hausdorf moment problem}

\medskip

\noindent Let us set
\begin{equation*}
	g(\tau)=BL(1-\tau){\bf 1}_{[0,1]}(\tau) + BL(\tau-1){\bf 1}_{[1,2]}(\tau)
\end{equation*}

\medskip

\noindent The change of variable $t=e^{-\tau}$ leads to the estimates :
\begin{equation*}
\forall m\ge m_0,\quad	\bigg|\int_{0}^{1}t^{2y_m-1}g(-\ln(t))dt\bigg|\le C_A\varepsilon.
\end{equation*}

\noindent We recall that, for all $m\in\mathbb{N}$, we have set $y_m=\sqrt{\kappa_m}$, where $\kappa_m=m(m+n-2)$. Let us set $\alpha=2y_{m_0}-1$ and  

\begin{equation}
\label{key_2}
	\lambda_m:=2y_m-1-\alpha
\end{equation}

\noindent Then, by denoting 
\begin{equation*}
\displaystyle h(t)=t^\alpha g(-\ln(t)),
\end{equation*}
we get:
\begin{equation}
\label{moments_estimates}
\bigg|\int_{0}^{1}t^{\lambda_m}h(t)dt\bigg|\le C_A\varepsilon,\quad\forall m\in\mathbb{N}.
\end{equation}

$\quad$

\noindent \noindent Thus, we would like now to answer the following question : does the approximate knowledge of the moments of $h$ on the sequence $(\lambda_m)_{m\in\mathbb{N}}$ determine $h$ up to a small error in $L^2$ norm ? 

$\quad$

\noindent Let us fix $m\in\mathbb{N}$ (we will precise it later) and consider the finite real sequence :

\begin{equation*}
\Lambda_m:0= \lambda_0<\lambda_1<...<\lambda_m.
\end{equation*}

\begin{definition}
	The subspace of the Müntz polynomials of degree $\lambda_m$ is defined as :
	\begin{equation*}
	\mathcal{M}(\Lambda_m)=\{P:\: P(x)=\sum_{k=0}^{m}a_kx^{\lambda_k}\}.
	\end{equation*}
\end{definition}

\begin{definition}
	The $L^2$-error of approximation from $\mathcal{M}(\Lambda_m)$ of a function $f\in L^2([0,1])$ is :
	\begin{equation*}
	E_2(f,\Lambda_m)=\underset{P\in\mathcal{M}(\Lambda_m)}{\inf}\|f-P\|_2.
	\end{equation*}
\end{definition} 

$\quad$

\noindent $E_2(h,\Lambda_m)$ appears in an estimate of $\|h\|_{2}$ given by Proposition \ref{norm_2}. Thanks to the Gram-Schmidt process, we define the sequence of Müntz polynomials $\big(L_p(x)\big)$ as $L_0\equiv 1$ and, for $p\ge 1$ :
\begin{equation*}
L_p(x)=\sum_{j=0}^{p}C_{pj}x^{\lambda_j},
\end{equation*}

\noindent where :
\begin{equation*}
C_{pj}=\sqrt{2\lambda_p+1}\frac{\prod_{r=0}^{p-1}(\lambda_j+\lambda_r+1)}{\prod_{\substack{r=0,r\ne j}}^{p}(\lambda_j-\lambda_r)}.
\end{equation*}

$\quad$

\begin{proposition}
	\label{norm_2}
Under the assumption (\ref{moments_estimates}), we have the following estimate :
	We have the following estimate :
	\begin{equation*}
\|h\|_2^2\le C_A\varepsilon^2\sum_{k=0}^{m}\bigg(\sum_{\ell=0}^{k}|C_{k\ell}|\bigg)^2 +E_2(h,\Lambda_m)^2.
	\end{equation*}
\end{proposition}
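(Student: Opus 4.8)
The plan is to turn the moment estimate (\ref{moments_estimates}) into an $L^2$ estimate by using the orthonormal system of Müntz--Legendre polynomials $(L_p)_{p\ge0}$ furnished by the Gram--Schmidt procedure. First I would record that $h$ genuinely belongs to $L^2([0,1])$: by Proposition \ref{operator_exist_estimate_2} the function $BL$ is bounded by $C_A$, hence so is $g$, and since $\alpha = 2y_{m_0}-1\ge 0$ the function $h(t)=t^\alpha g(-\ln t)$ is bounded on $[0,1]$. This makes all the inner products below well defined.

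The core of the argument is the identity coming from orthogonal projection. Since $L_0,\dots,L_m$ is an orthonormal basis of $\mathcal{M}(\Lambda_m)$, the partial sum $\sum_{k=0}^m \langle h,L_k\rangle L_k$ is exactly the orthogonal projection of $h$ onto $\mathcal{M}(\Lambda_m)$, so by the Pythagorean theorem in $L^2([0,1])$,
\[
\|h\|_2^2 \;=\; \sum_{k=0}^m \big|\langle h,L_k\rangle\big|^2 \;+\; \Big\|h-\sum_{k=0}^m\langle h,L_k\rangle L_k\Big\|_2^2 \;=\; \sum_{k=0}^m \big|\langle h,L_k\rangle\big|^2 + E_2(h,\Lambda_m)^2,
\]
the last equality because the projection realises the infimum defining $E_2(h,\Lambda_m)$.

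It then remains to control each coefficient $\langle h,L_k\rangle$ by means of the explicit expansion $L_k(x)=\sum_{\ell=0}^k C_{k\ell}x^{\lambda_\ell}$. For $\ell\le k\le m$ the exponent $\lambda_\ell$ is one of the numbers appearing in (\ref{moments_estimates}), so
\[
\big|\langle h,L_k\rangle\big| = \Big|\sum_{\ell=0}^k C_{k\ell}\int_0^1 t^{\lambda_\ell}h(t)\,dt\Big| \le \sum_{\ell=0}^k |C_{k\ell}|\,\Big|\int_0^1 t^{\lambda_\ell}h(t)\,dt\Big| \le C_A\,\varepsilon\sum_{\ell=0}^k|C_{k\ell}|.
\]
Squaring, summing over $k=0,\dots,m$, and inserting the result into the identity above yields
\[
\|h\|_2^2 \le C_A^2\,\varepsilon^2\sum_{k=0}^m\Big(\sum_{\ell=0}^k|C_{k\ell}|\Big)^2 + E_2(h,\Lambda_m)^2,
\]
which is the announced bound once $C_A^2$ is renamed $C_A$.

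The computation is essentially a one-line Hilbert-space argument combined with the triangle inequality, so there is no serious analytic obstacle here. The only point that deserves care is the justification that the $(L_p)$ are indeed orthonormal in $L^2([0,1])$, equivalently that the closed-form expression for the coefficients $C_{pj}$ is the correct Müntz--Legendre one for the increasing sequence of distinct nonnegative reals $0=\lambda_0<\lambda_1<\dots<\lambda_m$; this is classical and can be cited, after which everything reduces to the elementary estimates above.
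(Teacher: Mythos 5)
Your argument is exactly the paper's: orthogonal projection onto $\mathcal{M}(\Lambda_m)$, the Pythagorean identity $\|h\|_2^2=\sum_k|\langle h,L_k\rangle|^2+E_2(h,\Lambda_m)^2$, and the coefficient bound $|\langle h,L_k\rangle|\le C_A\varepsilon\sum_{\ell\le k}|C_{k\ell}|$ via the explicit expansion of $L_k$ and the moment estimate \eqref{moments_estimates}. The only additions (verifying $h\in L^2$ and citing orthonormality of the Müntz--Legendre system) are harmless sanity checks the paper leaves implicit; there is no gap.
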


\begin{proof} Let us denote $\displaystyle \pi(h)=\sum_{k=0}^{m}\langle L_k,h\rangle L_k$ the orthogonal projection of $h$ on $\mathcal{M}(\Lambda_m)$.
	\begin{equation*}
	\begin{aligned}
	\|h\|^2_2&=\|\pi(h)\|^2+ \|h-\pi(h)\|^2_2\\
	&=\sum_{k=0}^{m}\langle L_k,h\rangle ^2+ E_2(\Lambda_m,h)^2.
	\end{aligned}
	\end{equation*}
	\noindent As \begin{center}
		$\displaystyle |\langle L_k,h\rangle| = \bigg|\sum_{\ell=0}^{k}C_{k\ell}\underbrace{\int_{0}^{1}x^{\lambda_\ell}h(x)dx}_{\le C_A\varepsilon} \bigg|\le C_A\varepsilon\sum_{\ell=0}^{k}|C_{k\ell}|$,
	\end{center} 
	\noindent one gets
	\begin{equation*}
	\|h\|^2_2 \le C_A\varepsilon^2\sum_{k=0}^{m}\bigg(\sum_{\ell=0}^{k}|C_{k\ell}|\bigg)^2+E_2(\Lambda_m,h)^2.
	\end{equation*}
\end{proof}
\noindent We would like to find $m(\varepsilon)\in\mathbb{N}$ satisfying :
\begin{equation*}
\lim\limits_{\varepsilon\to 0}m(\varepsilon)=+\infty
\end{equation*}
\noindent and such that \begin{center}
	$\displaystyle \sum_{k=0}^{m(\varepsilon)}\bigg(\sum_{\ell=0}^{k}|C_{k\ell}|\bigg)^2 \le \frac{1}{\varepsilon}$,
\end{center}

\medskip

\noindent in order to obtain $\|h\|^2_2\le C_A\varepsilon + E_2(\Lambda_{m(\varepsilon)},h)$.

\medskip

\begin{lemma}
	\label{ineq_2}
	$\quad$
	\begin{enumerate}
		\item For all $m\in\mathbb{N}$, $\lambda_{m+1}-\lambda_m\ge 2$.
		\item For all $m\in\mathbb{N}$, $\displaystyle \lambda_{m+1}-\lambda_m= 2+O\bigg(\frac{1}{m}\bigg)$.
	\end{enumerate}
\end{lemma}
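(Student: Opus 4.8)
The plan is to reduce both assertions to elementary estimates on $\sqrt{\kappa_m}=\sqrt{m(m+n-2)}$. Indeed, after the shift by $m_0$ performed just above, the sequence $(\lambda_m)_{m\in\mathbb{N}}$ of (\ref{key_2}) has the form $\lambda_m=2\sqrt{\kappa_{m+m_0}}-2\sqrt{\kappa_{m_0}}$, so that for every $m$ one has $\lambda_{m+1}-\lambda_m=2\big(\sqrt{\kappa_{p+1}}-\sqrt{\kappa_p}\big)$ with $p=m+m_0$. Everything thus comes down to controlling $\sqrt{\kappa_{p+1}}-\sqrt{\kappa_p}$ for two consecutive integers $p$ and $p+1$.

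For the first point I would prove the pointwise inequality $\sqrt{\kappa_{p+1}}-\sqrt{\kappa_p}\ge 1$ for all $p\ge 0$. Writing it as $\sqrt{(p+1)(p+n-1)}\ge 1+\sqrt{p(p+n-2)}$ and squaring (both sides being nonnegative), it is equivalent, after cancelling $p^2$ and simplifying, to $p+\frac{n-2}{2}\ge\sqrt{p(p+n-2)}$; since the left-hand side is nonnegative, squaring once more reduces this to $\frac{(n-2)^2}{4}\ge 0$, which is obviously true, with equality exactly when $n=2$. Hence $\lambda_{m+1}-\lambda_m=2\big(\sqrt{\kappa_{p+1}}-\sqrt{\kappa_p}\big)\ge 2$, which is the first claim; observe moreover that when $n=2$ one has $\kappa_m=m^2$, so that $\lambda_{m+1}-\lambda_m=2$ identically.

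For the second point I would use the expansion $\sqrt{\kappa_p}=\sqrt{p(p+n-2)}=p+\frac{n-2}{2}+O\big(\frac1p\big)$ already recorded in the paper (see just before (\ref{canard_2})). It yields $\sqrt{\kappa_{p+1}}-\sqrt{\kappa_p}=1+O\big(\frac1p\big)$, hence $\lambda_{m+1}-\lambda_m=2+O\big(\frac1p\big)=2+O\big(\frac1m\big)$ since $p=m+m_0$. A slightly finer expansion $\sqrt{\kappa_p}=p+\frac{n-2}{2}-\frac{(n-2)^2}{8p}+O\big(\frac1{p^2}\big)$ would even give a remainder $O\big(\frac1{m^2}\big)$, but $O\big(\frac1m\big)$ is all that is needed in what follows.

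There is no genuine obstacle here: the only point requiring a little care is bookkeeping of the index shift by $m_0$, which merely affects the implied constant in the $O(1/m)$ term and leaves the sharp inequality of the first point unchanged.
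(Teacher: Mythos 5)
Your proof is correct, and for the first point it takes a genuinely different (and cleaner) route than the paper. The paper reduces the claim to $y_{m+1}-y_m\ge 1$ and then verifies this only asymptotically, via the Taylor expansion of $\sqrt{m^2+am}$, concluding that the inequality holds for $m$ large enough and absorbing the finitely many possible exceptions by declaring "without loss of generality" that it holds for all $m\ge m_0$ (i.e. by enlarging the index shift). You instead prove the exact pointwise inequality $\sqrt{(p+1)(p+n-1)}\ge 1+\sqrt{p(p+n-2)}$ for every $p\ge 0$ by two legitimate squarings (both sides nonnegative each time), reducing it to $\frac{(n-2)^2}{4}\ge 0$; this makes the statement "for all $m\in\mathbb{N}$" literally true with no adjustment of $m_0$, identifies the equality case $n=2$ (where $\kappa_m=m^2$ gives $\lambda_{m+1}-\lambda_m=2$ exactly), and avoids the slightly loose implication chain in the paper's expansion (which, incidentally, contains a harmless typo, $\frac{a}{8m}$ in place of $\frac{a^2}{8m}$). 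For the second point your argument — expanding $\sqrt{\kappa_p}=p+\frac{n-2}{2}+O\big(\frac{1}{p}\big)$ and noting that the shift $p=m+m_0$ only changes the implied constant — is essentially the paper's proof; your remark that a finer expansion gives a remainder $O\big(\frac{1}{m^2}\big)$ is accurate but not needed. The only interpretive point is the implicit re-indexing in (\ref{key_2}) so that $\lambda_0=0$; your reading $\lambda_m=2\sqrt{\kappa_{m+m_0}}-2\sqrt{\kappa_{m_0}}$ is the intended one, and since your gap estimate holds for all consecutive indices, the question is immaterial.
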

\begin{proof}
	
	$\quad$
	
	$\quad$
	
	\noindent 1. Let $m\in\mathbb{N}$ and set $a=n-2$. From (\ref{key_2}) we have the equivalence $\lambda_{m+1}-\lambda_m\ge 2\Leftrightarrow y_{m+1}-y_m\ge 1$, where $y_m=\sqrt{m^2+am}$. For $m\in\mathbb{N}$, one has :
	\begin{equation*}
	\begin{aligned}
	y_{m+1}-y_m\ge 1&\Leftarrow \sqrt{(m+1)^2+a(m+1)}-\sqrt{m^2+am}\ge 1\\
	&\Leftrightarrow(m+1)^2+a(m+1)-m^2-am\ge \sqrt{(m+1)^2+a(m+1)}+\sqrt{m^2+am}\\
	&\Leftrightarrow 2m+1+a \ge m+1 + \frac{a}{2}-\frac{a}{8(m+1)}+m+\frac{a}{2}-\frac{a}{8m}+o\bigg(\frac{1}{m}\bigg)\\
	&\Leftrightarrow\frac{a}{8(m+1)}\ge -\frac{a}{8m}+ o\bigg(\frac{1}{m}\bigg),
	\end{aligned}
	\end{equation*}
	\noindent and that is true for $m$ large enough. We assume, without loss of generality, that it is true for all $m\ge m_0$. Hence, for all $m\in\mathbb{N}$, $\lambda_{m+1}-\lambda_m \ge 2$.
	
	$\quad$
	
	\noindent 2. Let $m\in\mathbb{N}$ and $u_m=\sqrt{\kappa_\ell}\:$ for some $\ell\in\mathbb{N}$. Then \begin{center}
		$\displaystyle y_{m+1}=\sqrt{\kappa_{m+1}}=\sqrt{\kappa_m}+1+O\bigg(\frac{1}{m}\bigg)=y_m+1+O\bigg(\frac{1}{m}\bigg),$
	\end{center}
	
	\noindent so we have the result.
\end{proof}

\medskip

\noindent Hence, there is $C>0$ such that, for all $m\in\mathbb{N}$, $\lambda_m\le 2m+C$. By setting $M_1=\max(2,2C+1)$, one gets :
\begin{equation*}
\prod_{r=0}^{p-1}(\lambda_j+\lambda_r+1)\le \prod_{r=0}^{p-1}(2j+2r+2C+1)\le M_1^p 	\prod_{r=0}^{p-1}(j+r+1).
\end{equation*}

\noindent On the other hand, for all $m\in\mathbb{N}$, $\lambda_{m+1}-\lambda_m\ge 2$. Let $m\in\mathbb{N}$ and $(r,j)\in\mathbb{N}$ such that $0\le r,j\le m$, $r\ne j$.
\begin{equation*}
\begin{aligned}
|\lambda_j-\lambda_r| &= |\lambda_j-\lambda_{j-1}|+|\lambda_{j-1}-\lambda_{j-2}|+...+|\lambda_{r+1}-\lambda_r|\\
&\ge 2|j-r|.
\end{aligned}
\end{equation*}

\noindent Consequently:
\begin{equation*}
|\prod_{\substack{r=0,r\ne j}}^{p}(\lambda_j-\lambda_r)|\ge 2^p\bigg|\prod_{\substack{r=0,r\ne j}}^{p}(j-r)\bigg|
\end{equation*}

\noindent 	It follows that
\begin{equation*}
\begin{aligned}
|C_{pj}|&\le \sqrt{4p+2C+1}{\bigg(\frac{M_1}{2}\bigg)^p}\frac{\prod_{r=0}^{p-1}|j+r+1|}{\prod_{\substack{r=0,r\ne j}}^{p}|j-r|}\\
&=\sqrt{4p+2C+1}{\bigg(\frac{M_1}{2}\bigg)^p}\frac{(j+1)...(j+p)}{j(j-1)...2\times 1\times 2\times... (p-j)}\\
&=\sqrt{4p+2C+1}{\bigg(\frac{M_1}{2}\bigg)^p}\frac{(j+p)!}{(j!)^2(p-j)!}
\end{aligned}
\end{equation*}

\noindent The multinomial formula stipulates that for any real finite sequence $(x_0,...,x_m)$ and any $n\in\mathbb{N}$:
\begin{equation*}
\bigg(\sum_{k=0}^{m}x_k\bigg)^n=\sum_{k_1+...+k_m=n}\binom{n}{k_1,k_2,...,k_m}x_1^{k_1}...x_m^{k_m},
\end{equation*}

\noindent where $\displaystyle\binom{n}{k_1,k_2,...,k_m}=\frac{n!}{k_1!k_2!...k_m!}$. 

$\quad$

\noindent As $j+j+(p-j)=j+p$, one deduces that :
\begin{equation*}
\frac{(j+p)!}{(j)!(j)!(p-j)!}\le (1+1+1)^{j+p} = 3^{j+p}
\end{equation*}

\noindent Hence (see \cite{daude2019stability} or \cite{ang2004moment}, chapter $4$, for similar computations) :
\begin{equation*}
\begin{aligned}
\varepsilon^2\sum_{k=0}^{m}\bigg(\sum_{\ell=0}^{k}|C_{k\ell}|\bigg)^2&\le  \varepsilon^2\sum_{k=0}^{m}\bigg(\sum_{\ell=0}^{k}\sqrt{4k+2C+1}\bigg(\frac{M_1}{2}\bigg)^k3^{k+\ell}\bigg)^2\\
&=  \varepsilon^2\sum_{k=0}^{m}\bigg(\frac{3M_1}{2}\bigg)^{2k}(4k+2C+1)\bigg(\sum_{\ell=0}^{k}3^{\ell}\bigg)^2\\
&\le \varepsilon^2(4m+2C+1)\sum_{k=0}^{m}\bigg(\frac{3M_1}{2}\bigg)^{2k}\bigg(\sum_{\ell=0}^{k}3^{\ell}\bigg)^2\\
&\le \varepsilon^2(4m+2C+1)\sum_{k=0}^{m}\bigg(\frac{3M_1}{2}\bigg)^{2k}\frac{3}{2}\times 3^{2k}\\
&\le \varepsilon^2\times \frac{3}{2} (4m+2C+1)\sum_{k=0}^{m}\bigg(\frac{9M_1}{2}\bigg)^{2k}\\
&\le \varepsilon^2\times \frac{3}{2} (4m+2C+1)(m+1)\bigg(\frac{9M_1}{2}\bigg)^{2m}\\
&=\varepsilon^2 g(m)^2
\end{aligned}
\end{equation*}

\noindent where $\displaystyle g(t):=\frac{3}{2}(4t+2C+1)(t+1)\bigg(\frac{9M_1}{2}\bigg)^{2t}$. 

$\quad$

\noindent As $g$ is a strictly increasing function on $\mathbb{R}_+$, we can set, for $\varepsilon$ small enough, $\displaystyle m(\varepsilon)=E\bigg(g^{-1}\bigg(\frac{1}{\sqrt{\varepsilon}}\bigg)\bigg)$. Thanks to this choice, we have \begin{equation*}
g\big(m(\varepsilon)\big)\le \frac{1}{\sqrt{\varepsilon}},
\end{equation*}
\noindent so that
\begin{equation*}
\varepsilon^2\sum_{k=0}^{m(\varepsilon)}\bigg(\sum_{p=0}^{k}|C_{kp}|\bigg)^2\le \varepsilon.
\end{equation*}

$\quad$

\noindent Let us now estimate $E_2(\Lambda_m,h)$. To this end, we recall some definitions.

\begin{definition}
	The index of approximation of $\Lambda_m$ in $L^2([0,1])$ is :
	\begin{equation*}
	\varepsilon_2(\Lambda_m)=\underset{y\ge 0}{\max}\bigg|\frac{B(1+iy)}{1+iy}\bigg|
	\end{equation*}
	where $B:\mathbb{C}\to\mathbb{C}$ is the Blaschke product defined as :
	\begin{equation*}
	B(z):=B(z,\Lambda_m)=\prod_{k=0}^{m}\frac{z-\lambda_k-\frac{1}{2}}{z+\lambda_k+\frac{1}{2}}
	\end{equation*}
\end{definition} 
\noindent We will take advantage of a much simpler expression of $\varepsilon_2\big(\Lambda_m\big)$, thanks to the following Theorem (\cite{lorentz1996constructive}, p.360):

\medskip

\begin{theorem}
	\label{simpler_prod_2}
	Let $\Lambda_m:0=\lambda_0<\lambda_1<...<\lambda_m$ be a finite sequence. Assume that $\lambda_{k+1}-\lambda_k\ge 2$ for $k\ge 0$. Then :
	\begin{equation*}
	\varepsilon_2\big(\Lambda_m\big)=\prod_{k=0}^{m}\frac{\lambda_k-\frac{1}{2}}{\lambda_k+\frac{3}{2}}
	\end{equation*}
\end{theorem}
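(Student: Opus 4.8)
The plan is to analyse directly the modulus of the Blaschke quotient $z\mapsto B(z)/z$ along the vertical line $\mathrm{Re}(z)=1$, along the lines of \cite{lorentz1996constructive}. First I would record the elementary identity obtained by computing each factor at $z=1+iy$: since the $k$-th factor is $\frac{(1+iy)-(\lambda_k+\frac12)}{(1+iy)+(\lambda_k+\frac12)}$, whose modulus squared is $\frac{(\lambda_k-\frac12)^2+y^2}{(\lambda_k+\frac32)^2+y^2}$, and since $|1+iy|^{-2}=(1+y^2)^{-1}$, one gets
\begin{equation*}
\left|\frac{B(1+iy)}{1+iy}\right|^2=\frac{1}{1+y^2}\prod_{k=0}^{m}\frac{(\lambda_k-\tfrac{1}{2})^2+y^2}{(\lambda_k+\tfrac{3}{2})^2+y^2}.
\end{equation*}
Putting $t=y^2\ge 0$, the maximisation over $y\ge 0$ becomes the study on $[0,+\infty)$ of the rational function $R(t):=\frac{1}{1+t}\prod_{k=0}^{m}\frac{(\lambda_k-\frac{1}{2})^2+t}{(\lambda_k+\frac{3}{2})^2+t}$, and it suffices to show that the extremal value of $R$ over $[0,+\infty)$ relevant to $\varepsilon_2(\Lambda_m)$ is attained at the boundary point $t=0$.

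To locate this extremum I would compute the logarithmic derivative
\begin{equation*}
\frac{R'(t)}{R(t)}=-\frac{1}{1+t}+\sum_{k=0}^{m}\left(\frac{1}{(\lambda_k-\tfrac{1}{2})^2+t}-\frac{1}{(\lambda_k+\tfrac{3}{2})^2+t}\right)=-\frac{1}{1+t}+\sum_{k=0}^{m}\frac{2(2\lambda_k+1)}{\big((\lambda_k-\tfrac{1}{2})^2+t\big)\big((\lambda_k+\tfrac{3}{2})^2+t\big)}.
\end{equation*}
This is exactly the point where the hypothesis $\lambda_{k+1}-\lambda_k\ge 2$ is used: the shifted nodes $\lambda_k+\frac12$ are then $2$-separated, so the sum can be dominated termwise by a telescoping (or integral) comparison, and this quantitative gap is what forces $R'$ to keep a fixed sign on $(0,+\infty)$, so that the extremum of $R$ on $[0,\infty)$ sits at $t=0$. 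Without a separation assumption the nodes could cluster, the sum could overwhelm the $-(1+t)^{-1}$ term on an interval, and the extremum would slip into the interior of $(0,+\infty)$; this sign analysis is the main obstacle, and the cleanest route is to bound $\sum_k\big((\lambda_k-\frac12)^2+t\big)^{-1}$ by an explicit decreasing comparison sum that the $2$-gap renders summable against $(1+t)^{-1}$.

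Finally, evaluating at $y=0$ gives $|B(1)|=\prod_{k=0}^{m}\frac{|\lambda_k-\frac{1}{2}|}{\lambda_k+\frac{3}{2}}$; since $\lambda_0=0$ and, by the separation, $\lambda_k\ge\lambda_1\ge 2$ for every $k\ge 1$, each factor with $k\ge 1$ satisfies $|\lambda_k-\frac12|=\lambda_k-\frac12$, and the sign appearing in the $k=0$ factor is irrelevant for the modulus, so that
\begin{equation*}
\varepsilon_2(\Lambda_m)=\prod_{k=0}^{m}\frac{\lambda_k-\tfrac{1}{2}}{\lambda_k+\tfrac{3}{2}}
\end{equation*}
(the right-hand side being read in modulus, as $\varepsilon_2\ge 0$), which is the asserted formula.
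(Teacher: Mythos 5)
Note first that the paper does not prove this statement at all: it is quoted verbatim from \cite{lorentz1996constructive} (p.~360), so there is no internal proof to compare with, and your attempt must stand on its own. Your opening computation is fine: each factor of $B(1+iy)$ has modulus squared $\frac{(\lambda_k-\frac12)^2+y^2}{(\lambda_k+\frac32)^2+y^2}$, and the problem does reduce to locating the maximum of $R(t)=\frac{1}{1+t}\prod_{k=0}^m\frac{(\lambda_k-\frac12)^2+t}{(\lambda_k+\frac32)^2+t}$ on $[0,\infty)$, with the correct logarithmic derivative. The gap is in the pivotal step: you assert, without any actual estimate, that the $2$-separation forces $R'$ to keep a fixed sign on $(0,+\infty)$ so that the extremum sits at $t=0$. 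This is not a technicality you can defer — the claim is false under the stated hypotheses, precisely because the sequence starts at $\lambda_0=0$. Indeed, at $t=0$ the $k=0$ term of your sum is already
\begin{equation*}
\frac{2(2\lambda_0+1)}{(\lambda_0-\tfrac12)^2(\lambda_0+\tfrac32)^2}=\frac{2}{\frac14\cdot\frac94}=\frac{32}{9}>1\ge\frac{1}{1+t}\Big|_{t=0},
\end{equation*}
so $R'(0)>0$ no matter what the remaining exponents are: $R$ increases near $t=0$ and its maximum is attained at an interior point, not at the boundary. The case $m=0$, $\Lambda_0=\{0\}$, makes this concrete: $R(t)=\frac{t+\frac14}{(1+t)(t+\frac94)}$ is maximized near $t\approx 0.97$ with $\sup_{y\ge0}\big|\frac{B(1+iy)}{1+iy}\big|\approx 0.44$, whereas $|B(1)|=\frac13$.

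Consequently the route "maximum at $y=0$, then evaluate" cannot be completed as described; the separation hypothesis controls the tail terms (each contributes only $O(\lambda_k^{-3})$ to the logarithmic derivative) but it cannot tame the $\lambda_0=0$ factor, which is where your telescoping/integral comparison would have to, and cannot, close the argument. (This also shows that the identity as transcribed in the paper needs care: with $\lambda_0=0$ the right-hand side is negative while $\varepsilon_2\ge 0$, and even in modulus it does not equal the supremum in the $m=0$ example, so the precise normalization/statement of the theorem in \cite{lorentz1996constructive} must be consulted rather than re-derived by a boundary-maximum argument.) To fix your proof you would need a genuinely different mechanism than monotonicity of $R$ on $[0,\infty)$.
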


\begin{definition}
	For a function $f\in L^2([0,1])$,  its $L^2$-modulus of continuity $w(f,.):\:]0,1[\to\mathbb{R}$ is defined as:
	\begin{equation*}
	w(f,u)=\underset{0\le r\le u}{\mathrm{sup}}\bigg(\int_{0}^{1-r}|f(x+r)-f(x)|^2dx\bigg)^{\frac{1}{2}}.
	\end{equation*} 
\end{definition}

\noindent The introduction of the two previous concepts is motivated by the following result (cf \cite{lorentz1996constructive}, Theorem 2.7 p.352) : 

$\quad$

\noindent \begin{theorem}
	\label{Schtr_2}
	Let $f\in L^2([0,1])$. Then there is an universal constant $C>0$ such that :
	\begin{equation*}
	E_2(\Lambda_m)\le C\omega(f,\varepsilon\big(\Lambda_m)\big)
	\end{equation*}
\end{theorem}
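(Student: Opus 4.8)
\noindent\emph{Proof proposal.}
The statement is the $L^{2}$ Müntz--Jackson theorem, and the plan is to reproduce the Hardy-space argument of \cite{lorentz1996constructive}. The substitution $x=e^{-s}$ is an isometry $L^{2}([0,1])\to L^{2}([0,+\infty))$, $f\mapsto F(s)=e^{-s/2}f(e^{-s})$, under which each monomial $x^{\lambda_{k}}$ becomes the exponential $e^{-(\lambda_{k}+1/2)s}$; composing with the Laplace transform, which is unitary from $L^{2}([0,+\infty))$ onto the Hardy space $H^{2}$ of the half-plane $\{\operatorname{Re}z>0\}$, the monomial $x^{\lambda_{k}}$ is carried to a scalar multiple of the reproducing kernel of $H^{2}$ at the point $\lambda_{k}+\tfrac12$. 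Hence $\mathcal{M}(\Lambda_{m})$ corresponds to the finite-dimensional model space $K_{B}=H^{2}\ominus BH^{2}$ spanned by these $m+1$ kernels, where $B$ is the Blaschke product $\prod_{k=0}^{m}\frac{z-\lambda_{k}-1/2}{z+\lambda_{k}+1/2}$. Writing $\Phi\in H^{2}$ for the transform of $f$, one obtains the exact identity
\begin{equation*}
E_{2}(f,\Lambda_{m})=\big\|(I-P_{K_{B}})\Phi\big\|_{H^{2}}=\big\|P_{+}\big(\bar B\,\Phi\big)\big\|_{H^{2}},
\end{equation*}
$P_{+}$ being the Riesz projection onto $H^{2}$; the best approximant is then the Müntz polynomial whose Laplace transform is $P_{K_{B}}\Phi$.

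\medskip

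\noindent It then remains to prove $\|(I-P_{K_{B}})\Phi\|_{H^{2}}\le C\,\omega(f,\varepsilon_{2}(\Lambda_{m}))$. The estimate is driven by one fact: by definition of the index of approximation, $|B(1+iy)|\le\varepsilon_{2}(\Lambda_{m})\,|1+iy|$ for every $y\in\mathbb{R}$ (the zeros of $B$ being real and positive), so that $B$ is uniformly small on the shifted line $\operatorname{Re}z=1$. With this in hand I would run a Jackson-type argument: split $f=f_{h}+(f-f_{h})$, where $f_{h}$ is the average of $f$ over windows of length $h:=\varepsilon_{2}(\Lambda_{m})$; by definition of the $L^{2}$-modulus of continuity $\|f-f_{h}\|_{2}\le\omega(f,h)$, and the best-approximation error only decreases under subtracting this perturbation, while $\|f_{h}'\|_{2}\le h^{-1}\omega(f,h)$. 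Since $\lambda_{0}=0$, constants lie in $\mathcal{M}(\Lambda_{m})$, so one may write $f_{h}=c+\int_{0}^{\,\cdot}f_{h}'$ and is reduced to approximating $\int_{0}^{\,\cdot}f_{h}'$; exploiting a Bernstein-type inequality for the model space $K_{B}$ — morally, that on functions whose transform decays fast enough the operator $I-P_{K_{B}}$ has size comparable to $\varepsilon_{2}(\Lambda_{m})$, which is where the smallness of $B$ on $\operatorname{Re}z=1$ and a contour shift enter — one gets $E_{2}(f_{h},\Lambda_{m})\le C\,h\,\|f_{h}'\|_{2}\le C\,\omega(f,h)$. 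Adding the two contributions gives the claim with a universal constant.

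\medskip

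\noindent The main obstacle is precisely that Bernstein-type step: turning the scalar bound $|B(1+iy)|\le\varepsilon_{2}(\Lambda_{m})|1+iy|$ into an $L^{2}$-operator estimate for $I-P_{K_{B}}$ on smooth data, in such a way that only the $L^{2}$-modulus of continuity of $f$ — and no higher regularity — survives in the final constant. This is the content of Theorem~2.7 in \cite{lorentz1996constructive}, which I would simply invoke. For the use made of it here, the function $h=q-\tilde q$ becomes, after the change of variables of Section~4.3, a $C^{1}$ function with $\|h'\|_{\infty}\le C_{A}$, hence $\omega(h,u)\le C_{A}\,u$; combined with Theorem~\ref{simpler_prod_2} — which, because $\lambda_{k}=2k+O(1)$, makes $\varepsilon_{2}(\Lambda_{m})$ of order $m^{-1}$ — and with the choice $m(\varepsilon)$ fixed above, this delivers the logarithmic rate $\|q-\tilde q\|_{2}\le C_{A}/\ln(1/\varepsilon)$.
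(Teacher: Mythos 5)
Your proposal and the paper take the same route: this is a standard Müntz--Jackson theorem in $L^2$, and the paper offers no proof of its own, simply citing \cite{lorentz1996constructive} (Theorem 2.7, p.~352), exactly as you ultimately do. Your Hardy-space sketch — unitary equivalence of $L^2([0,1])$ with $L^2([0,\infty))$ and then with $H^2$ of the right half-plane, identification of $\mathcal{M}(\Lambda_m)$ with the model space $K_B$, the identity $E_2(f,\Lambda_m)=\|P_+(\bar B\Phi)\|$, and the Jackson-type split through Steklov averages — is a faithful account of the underlying argument, and you are candid that the Bernstein-type operator estimate is the genuinely hard step you would delegate to the cited theorem, which is entirely consistent with the paper's treatment.
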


\begin{lemma}
	\label{modulus_2}
	$w(h,u)\le C_{A}u$, $\forall u\in [0,1/e^2]$. 
\end{lemma}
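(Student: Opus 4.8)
The plan is to show that $h$ is Lipschitz continuous on $[0,1]$ with a constant of the form $C_A$, and then to read off the modulus estimate. Indeed, if $|h(x)-h(y)|\le C_A|x-y|$ for all $x,y\in[0,1]$, then for every $r\in[0,u]$
\begin{equation*}
\int_{0}^{1-r}|h(x+r)-h(x)|^2\,dx\le C_A^2\,r^2(1-r)\le C_A^2\,u^2,
\end{equation*}
so that $w(h,u)=\sup_{0\le r\le u}\big(\int_0^{1-r}|h(x+r)-h(x)|^2dx\big)^{1/2}\le C_A\,u$; the restriction $u\le 1/e^2$ only makes the statement clean near the edge of the support of $h$.

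So everything reduces to a Lipschitz estimate for $h(t)=t^{\alpha}g(-\ln t)$, where $g(\tau)=BL(1-\tau)\mathbf 1_{[0,1]}(\tau)+BL(\tau-1)\mathbf 1_{[1,2]}(\tau)$ and, by Proposition~\ref{operator_exist_estimate_2}(2), $BL\in C^1([0,1])$ with $\|BL\|_\infty+\|(BL)'\|_\infty\le C_A$. First I would treat $g$: on $[0,1]$ and on $[1,2]$ it is $C^1$ with derivative $\mp(BL)'$, hence of modulus $\le C_A$; at the junction $\tau=1$ both branches take the common value $BL(0)$, so $g$ is continuous there and therefore $C_A$-Lipschitz on $[0,2]$. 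Since $g$ vanishes outside $[0,2]$, the function $h$ vanishes on $(0,e^{-2}]$ (because $-\ln t\ge 2$ there forces $g(-\ln t)=0$). On the remaining interval $[e^{-2},1]$, the weight $t\mapsto t^{\alpha}$ is $C^1$ with derivative $\alpha t^{\alpha-1}$ bounded by a fixed constant (recall $\alpha=2y_{m_0}-1$ is fixed and $t$ stays in $[e^{-2},1]$), $t\mapsto-\ln t$ is $C^1$ with $|(\ln t)'|=1/t\le e^2$, and $g$ is $C_A$-Lipschitz; hence $h$, being a product of the bounded $C_A$-Lipschitz map $t\mapsto t^{\alpha}$ with the bounded $C_A$-Lipschitz map $t\mapsto g(-\ln t)$, is $C_A$-Lipschitz on $[e^{-2},1]$. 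Gluing the two regions at $t=e^{-2}$ then gives that $h$ is $C_A$-Lipschitz on all of $[0,1]$, which is what we needed.

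The main (and essentially only) delicate point is the bookkeeping for $g$ at the two endpoints $\tau=0$ and $\tau=2$ of its support interval — equivalently, the behaviour of $h$ as $t\downarrow e^{-2}$ — where one must check that the branch $BL(\tau-1)$ matches the zero extension, so that $h$ really glues continuously rather than jumping. This is where the standing hypotheses of Section~4 (symmetry of $f$ and $\tilde f$, together with the boundary identifications already obtained in Lemma~\ref{coeff_2}) are used to control the boundary value of $BL$. Everything else is a routine chain-rule and product-rule computation using only the uniform $C^1$-bounds on the Marchenko kernels $K,\tilde K$ (Proposition~\ref{noyau_estim_2}) that already underlie Proposition~\ref{operator_exist_estimate_2}.
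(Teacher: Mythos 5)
Your route — show $h$ is $C_A$-Lipschitz and read off $w(h,u)\le C_A u$ — is the paper's argument in slightly different packaging: the paper writes $h = h_1 + h_2$ at $t = 1/e$, invokes the uniform $C^1$ bound on $BL$ from Proposition~\ref{operator_exist_estimate_2}, and uses the continuity of $h$ at $t = 1/e$ (both branches give $(1/e)^\alpha BL(0)$). You also correctly identify the one genuinely delicate step, namely the behaviour of $h$ as $t \downarrow e^{-2}$. Where your proposal goes astray is in claiming that the standing hypotheses force the branch $BL(\tau-1)$ to match the zero extension. From the explicit formulas for $Q$ and $R$, every integral term in $QL(\tau)$ and $RL(\tau)$ has lower limit $\tau$ or $2\tau$ and hence vanishes at $\tau = 1$, leaving $BL(1) = \tfrac12 L(1) + \tfrac12 L(1) = L(1) = q(1) - \tilde q(1)$. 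Lemma~\ref{coeff_2} and symmetry give only $f(1) = \tilde f(1)$; since $q$ involves second derivatives of $f$, this does not force $L(1)=0$ for $n\ge 3$ (it does for $n=2$, where $q=-\omega f$). So $h$ generically jumps at $t=e^{-2}$, the gluing step fails, and $h$ is not Lipschitz on $[0,1]$. To be fair, the paper's own write-up has the same soft spot: its decomposition of $\int_0^{1-r}|h(t+r)-h(t)|^2\,dt$ starts at $1/e^2$ and silently drops the piece over $[1/e^2 - r,\, 1/e^2]$, whose trivial bound $\|h_1\|^2_\infty\,r$ would only yield $w(h,u)\le C_A\sqrt{u}$ (still log-type, but with a weaker exponent downstream). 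But the specific assertion that the symmetry/boundary identifications make $BL(1)=0$ is unjustified, and without it the Lipschitz argument breaks at the left edge of the support of $h$; that is the gap.
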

\begin{proof}
	
	\noindent We write $h(x)$ as the sum of two functions with disjoint support :
	\begin{equation*}
	h=h_{1}+h_{2},
	\end{equation*}
	\noindent with :

	\begin{enumerate}
		\item[$\bullet$] $\displaystyle h_{1}(t)=t^\alpha BL(-\ln(x)-1){\bf 1}_{[\frac{1}{e^2},\frac{1}{e}]}(t)$, \\
		\item[$\bullet$] $\displaystyle h_{2}(t)=t^\alpha BL(1+\ln(t)){\bf 1}_{[\frac{1}{e},1[}(t)$.\\
	\end{enumerate}
	\noindent Thanks to the second part of Proposition \ref{operator_exist_estimate_2}, the function $BL$ is bounded by a constant $C_A$ so, for $i\in[\![1,2]\!]$, each of the function $h_{i}$ is bounded by some constant $C_{A}$ depending on $A$. Moreover, $BL$ is $C^1$ on $[\frac{1}{e^2},\frac{1}{e}]$ and $[\frac{1}{e},1]$, and, for $i\in[\![1,2]\!]$,  $h_{i}'$ is bounded by a constant $C_{A}$. Let $x\in[0,1/e^2]$, $r\in [0,x]$. We have :
	
	\begin{equation*}
	\begin{aligned}
	\int_{0}^{1-r}|h(t+r)-h(t)|^2dt &=\int_{\frac{1}{e^2}}^{\frac{1}{e}-r^2}|h_{1}(t+r)-h_{1}(t)|^2dt + \int_{\frac{1}{e}-r^2}^{\frac{1}{e}}|h_{2}(x+r)-h_{1}(t)|^2dt\\
	&+\int_{\frac{1}{e}}^{1-r}|h_{2}(t+r)-h_{2}(t)|^2dt\\
	&\le \bigg(\frac{1}{e}-\frac{1}{e^2}-r^2\bigg)\|h_{1}'\|_\infty^2 r^2 + r^2\bigg(\|h_{2}\|_\infty+\|h_{1}\|_\infty\bigg)^2\\
	&+\bigg(1-\frac{1}{e}-r\bigg)\|h_{2}'\|_\infty^2 r^2\\
	&\le C_{A}r^2.
	\end{aligned}
	\end{equation*}
	Taking the square root and the supremum on $r$ on each side, the result is proved.		
\end{proof}

\begin{lemma}
	\label{Idefix_2}
	\begin{equation*}
	\varepsilon_2\big(\Lambda_m\big)=O\bigg(\frac{1}{m}\bigg),\qquad m\to+\infty.
	\end{equation*}
\end{lemma}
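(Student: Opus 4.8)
The plan is to start from the closed form for $\varepsilon_2(\Lambda_m)$ given by Theorem \ref{simpler_prod_2}, which is applicable because the first part of Lemma \ref{ineq_2} guarantees $\lambda_{k+1}-\lambda_k\ge 2$ for all $k$. This reduces the claim to estimating the finite product
\begin{equation*}
\varepsilon_2(\Lambda_m)=\prod_{k=0}^{m}\frac{\lambda_k-\frac12}{\lambda_k+\frac32}=\prod_{k=0}^{m}\left(1-\frac{2}{\lambda_k+\frac32}\right).
\end{equation*}
Since $\lambda_0=0<\frac12$, the factor for $k=0$ is the fixed constant $-\frac13$; isolating it and passing to absolute values, it suffices to bound $\prod_{k=1}^{m}\bigl(1-\tfrac{2}{\lambda_k+3/2}\bigr)$, a product of numbers lying in $(0,1)$ because $\lambda_k\ge 2$ for $k\ge 1$.

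Next I would take logarithms and use the elementary inequality $\ln(1-x)\le -x$ valid on $[0,1)$, together with the upper bound $\lambda_k\le 2k+C$ already established in the excerpt (it follows from $\lambda_k=2y_k-1-\alpha$ and $y_k=\sqrt{k(k+n-2)}=k+O(1)$). Writing $a:=\frac{2C+3}{4}>0$, so that $\frac{2}{\lambda_k+3/2}\ge\frac{1}{k+a}$, I obtain
\begin{equation*}
\ln\prod_{k=1}^{m}\left(1-\frac{2}{\lambda_k+\frac32}\right)\le-\sum_{k=1}^{m}\frac{2}{\lambda_k+\frac32}\le-\sum_{k=1}^{m}\frac{1}{k+a}\le-\int_{1}^{m+1}\frac{dt}{t+a}=-\ln\frac{m+1+a}{1+a}\le-\ln m+\ln(1+a)
\end{equation*}
for $m\ge 1$. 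Combining this with the constant $k=0$ factor yields $\varepsilon_2(\Lambda_m)\le\frac{1+a}{3m}$, which is exactly the claimed $O(1/m)$.

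There is no genuine obstacle in this argument: the entire content is the observation that $\lambda_k$ grows linearly, $\lambda_k\sim 2k$, so the product $\prod_k\bigl(1-\tfrac{2}{\lambda_k+3/2}\bigr)$ behaves like $\prod_k\bigl(1-\tfrac1k\bigr)$ and therefore decays at the harmonic rate $1/m$. The only point demanding a little care is the sign of the $k=0$ term, handled by peeling off that single bounded factor before taking logarithms, after which all remaining factors are honestly in $(0,1)$.
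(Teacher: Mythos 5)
Your proof is correct and follows essentially the same route as the paper: invoke Theorem \ref{simpler_prod_2} to reduce to the finite product, take logarithms using $\ln(1-x)\le -x$, feed in the linear upper bound $\lambda_k\le 2k+C$, and compare the resulting sum to $\ln m$. The one point where you are actually a bit more careful than the paper's text is the $k=0$ factor: since $\lambda_0=0$, that factor equals $-1/3$, so the product is negative and one cannot literally take $\ln$ of the whole thing; peeling off that single bounded factor before passing to logarithms (as you do) is the right fix, whereas the paper applies $\ln(1-x)\le -x$ directly to a term with $x=4/3>1$ without comment.
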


\begin{proof}

$\quad$

$\quad$

\noindent Using Theorem \ref{simpler_prod_2} and Lemma \ref{ineq_2}, the expression of $\varepsilon_2(\Lambda_m)$ defined above can be written as
\begin{equation*}
\varepsilon_2(\Lambda_m)=\prod_{k=0}^{m}\frac{\lambda_k-\frac{1}{2}}{\lambda_k+\frac{3}{2}}.
\end{equation*}

\noindent

$\quad$

\noindent Recall there exists $C>0$ such that for all $m\in\mathbb{N}$, $\lambda_m\le 4m+C$. Consequently, one has :

\begin{equation*}
\begin{aligned}
\forall m\in\mathbb{N},\quad \ln\bigg(\prod_{k=0}^{m}\frac{\lambda_{k}-\frac{1}{2}}{\lambda_k+\frac{3}{2}}\bigg)&=\ln\bigg(\prod_{k=0}^{m}\bigg(1-\frac{2}{\lambda_k+\frac{3}{2}}\bigg)\bigg)\\
&=\sum_{k=0}^{m}\ln\bigg(1-\frac{2}{\lambda_k+\frac{3}{2}}\bigg)\bigg)\\
&\le -2\sum_{k=0}^{m} \frac{1}{\lambda_k+\frac{3}{2}}\\
&\le -2\sum_{k=0}^{m} \frac{1}{2k+C+\frac{3}{2}}.
\end{aligned}
\end{equation*}
\noindent But $\displaystyle -2\sum_{k=0}^{m} \frac{1}{2k+C+\frac{3}{2}}\underset{m\to+\infty}{=}-\ln(m)+O(1)$. Hence
\begin{equation*}
\varepsilon_2\big(\Lambda_m\big)=O\bigg(\frac{1}{m}\bigg).
\end{equation*}

\end{proof}

\noindent Hence, as $\displaystyle \varepsilon_2(\Lambda_{m(\varepsilon)})\in [0,1/e^2]$ for $\varepsilon$ small enough, we get thanks to Lemma \ref{modulus_2} and Theorem \ref{Schtr_2}:
\begin{center}
	$\displaystyle E(h,\Lambda_{m(\varepsilon)})_2\le C_{A}\varepsilon_2(\Lambda_{m(\varepsilon)})$.
\end{center} To sum up, we have shown that :
\begin{equation*}
\|h\|_2^2\le C_{A}\bigg(\varepsilon+\varepsilon_2(\Lambda_{m(\varepsilon)})^2\bigg).
\end{equation*}

\noindent Now, we know that $\displaystyle \varepsilon_2(\Lambda_{m(\varepsilon)})^2\le \frac{C_A}{m(\varepsilon)^2}$. By virtue of the double inequality 
\begin{center}
	$\displaystyle \frac{1}{\sqrt{\varepsilon}}+o(1)\le g\big(m(\varepsilon)\big)\le \frac{1}{\sqrt{\varepsilon}}$
\end{center}
one has	\begin{center}
	$\displaystyle \frac{1}{2}\ln\bigg(\frac{1}{\varepsilon}\bigg)\underset{\varepsilon\to 0}{\sim}
	 \ln\bigg(g\big(m(\varepsilon)\big)\bigg)\underset{\varepsilon\to 0}{\sim} C_Am(\varepsilon)$.
\end{center}
Hence (for another $C_A>0$) : $\displaystyle \frac{1}{m(\varepsilon)}\le \frac{C_A}{\ln\big(\frac{1}{\varepsilon}\big)}$. Consequently :
\begin{equation*}
\|h\|_2^2\le C_{A}\frac{1}{\ln\big(\frac{1}{\varepsilon}\big)^2}.
\end{equation*}

$\quad$

\noindent Since $h_{1}$ and $h_{2}$ have disjoint support, we have 
\begin{equation*}
\|h\|_2^2=\|h_{1}\|_2^2+\|h_{2}\|_2^2.
\end{equation*}

\noindent In particular

\begin{equation*}
\|h_{2}\|_2^2\le \|h\|_2^2
\end{equation*}

\noindent But as 
\begin{equation*}
\|h_{2}\|_2^2 = \int_{\frac{1}{e}}^{1}t^{2\alpha}\bigg|BL\big(1+\ln(t)\big)\bigg|^2dt
\end{equation*}
\noindent we get
\begin{equation*}
	\int_{\frac{1}{e}}^{1}t^{2\alpha+1}\bigg| BL\big(1+\ln(t)\big)\bigg|^2\frac{dt}{t}\le \frac{C_{A,a}}{\ln\big(\frac{1}{\varepsilon}\big)^2}
\end{equation*}

\noindent Hence, as we integrate over $\displaystyle\bigg[\frac{1}{e^{1}},1\bigg]$, the term $t^{2\alpha+1}$ is minorated by $(1/e)^{(2\alpha+1)}$. By returning to the $\tau$ coordinate, we obtain :
\begin{equation*}
\|BL(1-\tau)\|_{L^2([0,1])}\le C_{A}\frac{1}{\ln\big(\frac{1}{\varepsilon}\big)},
\end{equation*}
\noindent and then
\begin{equation*}
\|BL\|_{L^2([0,1])}\le C_{A}\frac{1}{\ln\big(\frac{1}{\varepsilon}\big)}.
\end{equation*}

$\quad$

\subsubsection{Invertibility of the $B$ operator}

$\quad$

\noindent Now, we want to prove that $B:L^2(0,1)\to L^2(0,1)$ is invertible and that its inverse is bounded with respect to $C_A$. We can write :
\begin{equation*}
B=I+C
\end{equation*}

\noindent where $\displaystyle Ch(\tau)=\int_{\tau}^{1}H_1(x,\tau)h(x)dx$, with :

\begin{equation*}
\begin{aligned}
H_1(x,\tau)&=	\tilde{P}(x,2\tau-x)+H(x,2\tau-x)+\int_{2\tau-x}^{2\tau}\tilde{P}(x,2\tau -t)H(x,t){\bf 1}_{[0,x]}(t)dt\\
	&+\int_{2\tau}^{x}\tilde{P}(x,t)H(x,t-2\tau)\:dt{\bf 1}_{[2\tau,1]}(x)+\int_{2\tau}^{x}\tilde{P}(x,t-2\tau)H(x,t)\:dt{\bf 1}_{[2\tau,1]}(x)\\
	&+P(x,2\tau-x)+\tilde{H}(x,2\tau-x)+\int_{2\tau-x}^{2\tau}P(x,2\tau -t)\tilde{H}(x,t){\bf 1}_{[0,x]}(t)dt\\
	&+\int_{2\tau}^{x}P(x,t)\tilde{H}(x,t-2\tau)\:dt{\bf 1}_{[2\tau,1]}(x)+\int_{2\tau}^{x}P(x,t-2\tau)\tilde{H}(x,t)\:dt{\bf 1}_{[2\tau,1]}(x).
	\end{aligned}
\end{equation*}

$\quad$

\begin{lemma}
	There is a constant $C_A>0$ such that, for all $h$ in $L^2(0,1)$ :
	\begin{center}
		$\displaystyle \forall n\in\mathbb{N^*},\:\:\forall \tau\in [0,1],\quad |C^nh(\tau)|\le C_A \frac{\big((1-\tau)\|H_1\|_{L^\infty}\big)^{n-1}}{(n-1)!}\|h\|_{L^2([0,1])}$
	\end{center}
\end{lemma}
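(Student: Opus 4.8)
The plan is to recognize $C$ as a Volterra-type integral operator and run the classical Picard-iteration estimate, the factorial gain coming from the fact that the kernel is integrated only over $[\tau,1]$. The whole argument is an induction on $n$.

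First I would record the $L^\infty$ control on the kernel: by Proposition \ref{noyau_estim_2} the functions $H$, $P$, $\tilde H$, $\tilde P$ are bounded by a constant depending only on $A$, and the iterated integrals entering $H_1$ are taken over intervals of length at most $1$; hence $H_1\in L^\infty([0,1]^2)$ with $\|H_1\|_{L^\infty}\le C_A$. Since $H_1$ is built from $C^1$ functions and indicator functions it is jointly measurable, so each iterate $C^nh$ is well defined, and will in fact be bounded on $[0,1]$, as confirmed a posteriori by the estimate being proved.

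For the base case $n=1$, I would simply apply Cauchy--Schwarz:
\begin{equation*}
|Ch(\tau)|\le \|H_1\|_{L^\infty}\int_\tau^1 |h(x)|\,dx\le \|H_1\|_{L^\infty}\sqrt{1-\tau}\,\|h\|_{L^2([0,1])}\le C_A\,\|h\|_{L^2([0,1])},
\end{equation*}
which is the claimed inequality for $n=1$ (empty product, $0!=1$). For the inductive step, assuming the bound for some $n\ge 1$, I would estimate, using the induction hypothesis inside the integral defining $C^{n+1}h=C(C^nh)$,
\begin{equation*}
|C^{n+1}h(\tau)|\le \|H_1\|_{L^\infty}\int_\tau^1 |C^nh(x)|\,dx\le C_A\,\frac{\|H_1\|_{L^\infty}^{\,n}}{(n-1)!}\,\|h\|_{L^2([0,1])}\int_\tau^1 (1-x)^{n-1}\,dx,
\end{equation*}
and then use $\int_\tau^1 (1-x)^{n-1}\,dx=\frac{(1-\tau)^n}{n}$ to get
\begin{equation*}
|C^{n+1}h(\tau)|\le C_A\,\frac{\big((1-\tau)\|H_1\|_{L^\infty}\big)^{n}}{n!}\,\|h\|_{L^2([0,1])},
\end{equation*}
which is exactly the statement for $n+1$, closing the induction.

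I do not expect a genuine obstacle: the only points requiring care are the bookkeeping of exponents and factorials in the induction and invoking Proposition \ref{noyau_estim_2} to replace $\|H_1\|_{L^\infty}$ by a constant $C_A$. The factorial decay is the standard mechanism behind the Neumann series of a Volterra operator, and it is precisely what will make $B=I+C$ invertible with inverse bounded in terms of $A$ in the step that follows.
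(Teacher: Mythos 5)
Your proof is correct and follows essentially the same route as the paper: bound $\|H_1\|_\infty$ by $C_A$ via Proposition \ref{noyau_estim_2}, use Cauchy--Schwarz for the base case, and close the induction with $\int_\tau^1(1-x)^{n-1}dx=(1-\tau)^n/n$, which is exactly the Volterra/Picard iteration argument given there.
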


\begin{proof}	
	
	$\quad$
	
	$\quad$
	
	\noindent By induction :
	
	$\quad$
	
	\noindent $\bullet$ From the estimates of Proposition \ref{noyau_estim_2}, $H$, $\tilde{H}$ and $H_1$ are bounded by a constant $C_A$. Using the triangle inequality and the Cauchy-Schwarz inequality, one immediately gets :
	\begin{equation*}
	\begin{aligned}
	|Ch(\tau)|&\le C_A\int_{\tau}^{1}|h(x)|dx\le C_A(1-\tau)\|h\|_{L^2([0,1])}\le C_A\|h\|_{L^2([0,1])}
	\end{aligned}
	\end{equation*}
	
	\medskip
	
	\noindent $\bullet$ Assume it is true for some $n\in\mathbb{N}^*$. Then :
	\begin{equation*}
	\begin{aligned}
	|C^{n+1}h(\tau)|=\bigg|\int_{\tau}^{1}H_1(x,t)C^nh(x)dx\bigg|    &\le\int_{\tau}^{1}\|H_1\|_\infty C_A\frac{(1-x)^{n-1}\|H_1\|_\infty^{n-1}}{(n-1)!}\|h\|_{L^2(0,1)}dx\\
	&=C_A\frac{\|H_1\|_\infty^n}{(n-1)!}\|h\|_{L^2(0,1)}\int_{\tau}^{1}(1-x)^{n-1}dx\\
	&=C_A \frac{\big((1-\tau)\|H_1\|_{\infty}\big)^{n}}{n!}\|h\|_{L^2([0,1])}
	\end{aligned}
	\end{equation*}
	
\end{proof}

\noindent Thus $\:\:\displaystyle \|C^n\|\le C_A \frac{\big((1-\tau)\|H_1\|_{\infty}\big)^{n-1}}{(n-1)!}\:\:$ for all $n\in\mathbb{N^*}$. It follows that the serie $\sum(-1)^nC^n$ is convergent. Consequently $B$ is invertible, $\displaystyle B^{-1}=\sum_{n=0}^{+\infty}(-1)^nC^n$ and :
\begin{equation*}
\|B^{-1}\|\le C_A.
\end{equation*}

\noindent Hence :
\begin{equation*}
\|q-\tilde{q}\|_{L^2(0,1)}=\|L\|_{L^2(0,1)}\le \|B^{-1}\|\|BL\|_{L^2(0,1)} \le C_{A} \frac{1}{\ln\big(\frac{1}{\varepsilon}\big)}
\end{equation*}

\noindent and the proof of Theorem \ref{stabresult_2} is complete.

$\quad$

\noindent Let us prove Corollary \ref{cor_difpot_2}.

\begin{proof}
	Let $s_1,s_2\ge 0$ and $\theta\in (0,1)$. Using the Gagliardo-Nirenberg inequalities (see \cite{brezis2018gagliardo}), one can write 
	\begin{equation*}
	\|g\|_{H^s(0,1)}\le \|g\|_{H^{s_1}(0,1)}^\theta\|g\|_{H^{s_2}(0,1)}^{1-\theta}
	\end{equation*}
	for every $g\in H^{s_1}(0,1)\cap H^{s_2}(0,1)$ and $s=\theta s_1+(1-\theta)s_2$. As $f$ and $\tilde{f}$ belong to $C(A)$ then $q-\tilde{q}$ belong to $H^2(0,1)$ and $\|q-\tilde{q}\|_{H^2(0,1)}\le C_A$. Hence, for $s_1=0$ and $s_2=2$, we have:
	\begin{equation*}
	\begin{aligned}
	\|q-\tilde{q}\|_{H^s(0,1)}&\le \|q-\tilde{q}\|_{L^2(0,1)}^\theta\|q-\tilde{q}\|_{H^{2}(0,1)}^{1-\theta}\\
	&\le C_A^{1-\theta}\|q-\tilde{q}\|_{L^2(0,1)}^\theta\\
	&\le C_A\frac{1}{\ln\big(\frac{1}{\varepsilon}\big)^\theta}
	\end{aligned}
	\end{equation*}
	
	\noindent with $\displaystyle \theta =\frac{2-s}{2}$. Using the Sobolev embedding $H^1(0,1)\hookrightarrow C^0(0,1)$ with $\|.\|_{\infty}\le 2 \|.\|_{H^1(0,1)}$, one gets (for $s=1$ and $\theta =1/2$):
	\begin{equation*}
	\begin{aligned}
	\|q-\tilde{q}\|_{\infty}\le 2\|q-\tilde{q}\|_{H^1(0,1)}\le  C_A \sqrt{\frac{1}{\ln\big(\frac{1}{\varepsilon}\big)}}
	\end{aligned}
	\end{equation*}
\end{proof}

\subsubsection{Uniform estimate of the conformal factors}

\medskip

\noindent Now we give the proof of Corollary \ref{cor_steklov_stab_2}. Assume that $n\ge3$, $\omega=0$ and let us set $F=f^{n-2}$. We can write $\displaystyle q=\frac{F''}{F}$ and then
\begin{equation*}
(\tilde{F}F'-\tilde{F}'F)'(t)=\tilde{F}F(q-\tilde{q})(t).
\end{equation*}

\noindent For all $t\in[0,1]$, we have : 
\begin{equation*}
\begin{aligned}
\tilde{F}(t)F'(t)-\tilde{F}'(t)F(t)&=(n-2)\tilde{f}^{n-2}f^{n-3}(t)f'(t)-(d-2)\tilde{f}^{n-3}f^{n-2}(t)\tilde{f}'(t)\\
&=(n-2)f^{n-3}(t)\tilde{f}^{n-3}(t)\bigg(\tilde{f}(t)f'(t)-f(t)\tilde{f}'(t)\bigg)
\end{aligned}
\end{equation*}

\noindent Assume that for all $t$ in $[0,1]$, $\tilde{f}(t)f'(t)-f(t)\tilde{f}'(t)\ne 0$, for example $\tilde{f}(t)f'(t)>f(t)\tilde{f}(t)$. Then :
\begin{equation*}
\frac{f'(t)}{f(t)}>\frac{\tilde{f}'(t)}{\tilde{f}(t)}.
\end{equation*} 

\noindent Then, by integrating between $0$ and $1$, one gets :
\begin{equation*}
\ln\big(f(1)\big)-\ln\big(f(0)\big)>\ln\big(\tilde{f}(1)\big)-\ln\big(\tilde{f}(0)\big).
\end{equation*}

\noindent and this is not true as $f(0)=f(1)$ and $\tilde{f}(0)=\tilde{f}(1)$. Consequently, there is $x_0\in[0,1]$ such that 	$\big(\tilde{f}f'-f\tilde{f}'\big)(x_0) =0$. Setting $G(x)=(\tilde{F}F'-\tilde{F}'F)(x)$, we have :
\begin{equation*}
\forall x\in [0,1],\quad G(x)=\int_{x_0}^{x}\tilde{F}F(q-\tilde{q})(t)dt.
\end{equation*}

\noindent From the $L^2$ estimate previously established on $q-\tilde{q}$, one has :
\begin{equation*}
\begin{aligned}
\forall x\in[0,1],\quad|G(x)|&\le\sqrt{|x-x_0|}C_A\|q-\tilde{q}\|_2 \\
&\le C_A \frac{1}{\ln\big(\frac{1}{\varepsilon}\big)}
\end{aligned}
\end{equation*}	

\noindent Hence :
\begin{equation*}
\bigg|\bigg(\frac{F}{\tilde{F}}\bigg)'(x)\bigg|=\bigg|\frac{G(x)}{\tilde{F}(x)^2}\bigg|\le C_A \frac{1}{\ln\big(\frac{1}{\varepsilon}\big)},
\end{equation*}	

\noindent and by integrating betwwen $0$ and $x$ :

\begin{equation*}
\begin{aligned}
\bigg|\frac{F(x)}{\tilde{F}(x)}-1\bigg|=\bigg|\int_{0}^{x}\bigg(\frac{F}{\tilde{F}}\bigg)'(t)dt\bigg|\le \int_{0}^{1}\bigg|\frac{G(t)}{\tilde{F}(t)^2}\bigg|dt \le C_A \frac{1}{\ln\big(\frac{1}{\varepsilon}\big)}.
\end{aligned}
\end{equation*}

\noindent and this last inequality leads to the estimate :
\begin{equation*}
\forall x\in[0,1],\quad	|f^{n-2}(x)-\tilde{f}^{n-2}(x)|\le C_A \frac{1}{\ln\big(\frac{1}{\varepsilon}\big)}.
\end{equation*}

\noindent Setting $k=n-2$, thanks to the relation $\displaystyle a^k-b^k=(a-b)\sum_{j=0}^{k}a^jb^{k-j}$, we get at last :
\begin{equation*}
\forall x\in[0,1],\quad|f(x)-\tilde{f}(x)|\le C_A \frac{1}{\ln\big(\frac{1}{\varepsilon}\big)}.
\end{equation*}

$\quad$

\section{About the Calder\'on problem}

Now, we prove Theorem \ref{Calderon_stab_2}. For $s\in\mathbb{R}$, $H^{s}(\partial M)$ can be defined as
\begin{equation*}
H^{s}(\partial M)=\bigg\{ \psi\in\mathcal{D}'(\partial M),\:\: \psi=\sum_{m\ge 0}\begin{pmatrix}
\psi_m^1\\ \psi_m^2
\end{pmatrix}\otimes Y_m,  \quad \sum_{m\ge 0}(1+\mu_m)^s\bigg(|\psi_m^1|^2+|\psi_m^2|^2\bigg) <\infty\bigg\}.
\end{equation*}

\noindent  Recall that we have denoted $\mathcal{B}(H^{1/2}(\partial M))$ the set of bounded operators from $H^{1/2}(\partial M)$ to $H^{1/2}(\partial M)$ and equipped $\mathcal{B}(H^{1/2}(\partial M))$ with the norm
\begin{equation*}
\|F\|_*=\sup_{\psi\in H^{1/2}(\partial M)\backslash\{0\}}\frac{\|F\psi\|_{H^{1/2}}}{\|\psi\|_{H^{1/2}}}.
\end{equation*}

\begin{lemma}
	\label{bounded_equiv_22}
	We have the equivalence :
	\begin{equation*}
\Lambda_g(\omega)-\Lambda_{\tilde{g}}(\omega)\in \mathcal{B}(H^{1/2}(\partial M))\Leftrightarrow \left\{\begin{aligned}
	&f(0)=\tilde{f}(0)\\
	&f(1)=\tilde{f}(1).
	\end{aligned}\right.
	\end{equation*}
\end{lemma}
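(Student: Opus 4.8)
The plan is to work through the block-diagonalization of the DN maps established in Section 2. Since $\Lambda_g(\omega)$ and $\Lambda_{\tilde g}(\omega)$ are both block-diagonal in the orthonormal basis $\mathscr{B}=(\{e_m^1,e_m^2\})_{m\ge0}$, so is their difference, with $m$-th block $\Lambda_g^m(\omega)-\Lambda_{\tilde g}^m(\omega)$. From the definition of $H^{1/2}(\partial M)$ recalled just before the lemma, an operator that is block-diagonal in $\mathscr{B}$ is bounded from $H^{1/2}(\partial M)$ to $H^{1/2}(\partial M)$ if and only if the operator norms of its blocks, measured \emph{after} conjugating by the weights $(1+\mu_m)^{1/4}$ (which are scalar multiples of the identity on each $2\times2$ block and hence commute with everything), stay bounded in $m$. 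Because these weights are scalar on each block, boundedness in $\mathcal B(H^{1/2})$ is simply equivalent to $\sup_m \big\|\Lambda_g^m(\omega)-\Lambda_{\tilde g}^m(\omega)\big\|<\infty$, where $\|\cdot\|$ is the Euclidean operator norm on $2\times2$ matrices. So the whole problem reduces to controlling the asymptotics in $m$ of each entry of the matrix difference.

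Next I would plug in the explicit formula for $\Lambda_g^m(\omega)$ with $h=f^{n-2}$ and compare entry by entry with $\Lambda_{\tilde g}^m(\omega)$, using $\kappa_m$ (or $\mu_m$) $\to+\infty$. The off-diagonal entries involve $\tfrac{1}{\Delta(\mu_m)}$ and $\tfrac{1}{\tilde\Delta(\mu_m)}$; by the standard asymptotics of the characteristic function $\Delta$ (it grows like $\tfrac{\sinh(\sqrt{\mu_m})}{\sqrt{\mu_m}}$), these go to zero exponentially fast, so the off-diagonal contributions to the difference are uniformly bounded (indeed, tend to $0$) regardless of $f,\tilde f$. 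For the diagonal entries, I would invoke Simon's expansion (Theorem \ref{Simon_2}) and its Corollary: $-M(\mu_m)=\sqrt{\mu_m}+O(1/\sqrt{\mu_m})$ and likewise for $N$. Hence
\begin{equation*}
\big(\Lambda_g^m(\omega)-\Lambda_{\tilde g}^m(\omega)\big)_{11}
=\bigg(\frac{1}{\sqrt{f(0)}}-\frac{1}{\sqrt{\tilde f(0)}}\bigg)\sqrt{\mu_m}+O(1),\qquad m\to+\infty,
\end{equation*}
and analogously the $(2,2)$-entry carries the factor $\tfrac{1}{\sqrt{f(1)}}-\tfrac{1}{\sqrt{\tilde f(1)}}$ against $\sqrt{\mu_m}$. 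The lower-order constant terms $C_0,C_1,\tilde C_0,\tilde C_1$ and the $O(1/\sqrt{\mu_m})$ remainders are uniformly bounded, so the matrix difference stays bounded in $m$ precisely when the two coefficients of $\sqrt{\mu_m}$ vanish, i.e. $f(0)=\tilde f(0)$ and $f(1)=\tilde f(1)$. Conversely, if both equalities hold, every entry of the difference is $O(1)$ uniformly, hence the operator is bounded; this gives both implications.

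The main obstacle, as I see it, is purely bookkeeping rather than conceptual: one must be careful that the $H^{1/2}$-norm really does reduce to a uniform bound on the \emph{unweighted} $2\times2$ blocks. This works here only because $-\Delta_{g_\mathbb S}$ acts as the scalar $\mu_m$ on both components $e_m^1,e_m^2$ of the $m$-th block, so the Sobolev weight $(1+\mu_m)^{s}$ is a scalar matrix on that block and commutes with $\Lambda_g^m(\omega)$; if the two components sat at different Laplace eigenvalues this reduction would fail. Once that point is made cleanly, the rest is just reading off the leading $\sqrt{\mu_m}$-asymptotics from Theorem \ref{Simon_2}. A secondary (minor) point to handle with care is that the off-diagonal terms, although individually exponentially small, must be bundled into the $O(1)$ remainder uniformly in $m$, which follows from the uniform lower bound $|\Delta(\mu_m)|\gtrsim \tfrac{e^{\sqrt{\mu_m}}}{\sqrt{\mu_m}}$ valid for large $m$.
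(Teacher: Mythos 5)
Your proposal is correct and follows essentially the same route as the paper: use the block-diagonal structure in $\mathscr{B}$ to reduce boundedness in $\mathcal{B}(H^{1/2}(\partial M))$ to a uniform bound on the $2\times 2$ blocks, then read off the leading $\sqrt{\mu_m}$-asymptotics of the diagonal entries from Simon's expansion of $M$ and $N$, with the off-diagonal entries exponentially small through $1/\Delta(\mu_m)$. The paper writes the weighted $\ell^2$-sum for the $H^{1/2}$-norm explicitly rather than invoking commutation with the scalar weights, but the content is the same (and your coefficient $\tfrac{1}{\sqrt{f(0)}}-\tfrac{1}{\sqrt{\tilde f(0)}}$ is the correct one; the paper's displayed $A_0,A_1$ are missing the square roots, a harmless typo).
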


	\noindent 
	
	\begin{proof}
	 Let us set 
	\begin{center}
	$\displaystyle C_0=\frac{1}{4\sqrt{f(0)}}\frac{h'(0)}{h(0)},\:$ $\quad\displaystyle C_1=\frac{1}{4\sqrt{f(1)}}\frac{h'(1)}{h(1)},\:$ $\quad\displaystyle A_0=\frac{1}{f(0)}-\frac{1}{\tilde{f}(0)}\quad$ and $\quad\displaystyle A_1=\frac{1}{f(1)}-\frac{1}{\tilde{f}(1)}$.
	\end{center}
\medskip 
\noindent For $m\ge 0$, one has, using the block diagonal representation of $\Lambda_g(\omega)$ and the asymptotics of $M(\mu_m)$ and $N(\mu_m)$ given in Theorem  \ref{Simon_2} and Corollary \ref{CorSimon_2}:

\begin{equation*}
\begin{aligned}
\Lambda_g^m(\omega)-\Lambda_{\tilde{g}}^m(\omega)&=\begin{pmatrix}
\frac{\tilde{M}(\mu_m)}{\sqrt{\tilde{f}(0)}}-\frac{M(\mu_m)}{\sqrt{f(0)}}+C_0-\tilde{C}_0&O\big(e^{-2\mu_m}\big)\\
O\big(e^{-2\mu_m}\big)&\frac{\tilde{N}(\mu_m)}{\sqrt{\tilde{f}(1)}}-\frac{N(\mu_m)}{\sqrt{f(1)}}+\tilde{C}_1-C_1
\end{pmatrix}\\
&=\begin{pmatrix}
A_0\sqrt{\mu_m}+(C_0-\tilde{C}_0)&0\\0&A_1\sqrt{\mu_m}+(\tilde{C}_1-C_1)
\end{pmatrix}+ \begin{pmatrix}
O\bigg(\frac{1}{\sqrt{\mu_m}}\bigg)&O\big(e^{-2\mu_m}\big)\\O\big(e^{-2\mu_m}\big)&O\bigg(\frac{1}{\sqrt{\mu_m}}\bigg)
\end{pmatrix}
\end{aligned}
\end{equation*}
Hence, for any $\big(\psi_m^1,\psi_m^2\big)\in\mathbb{R}^2$:
\begin{equation*}
	\big(\Lambda_g^m(\omega)-\Lambda_{\tilde{g}}^m(\omega)\big)\begin{pmatrix}
	\psi_m^1\\ \psi_m^2
	\end{pmatrix}=
	\sqrt{\mu_m}\begin{pmatrix}
	A_0\psi^1_m\\
	A_1\psi^2_m
	\end{pmatrix}+\begin{pmatrix}
	(C_0-\tilde{C}_0)\psi^1_m\\
	(\tilde{C}_1-C_1)\psi^2_m
	\end{pmatrix}+O\bigg(\frac{\psi^1_m+\psi^2_m}{\sqrt{\mu_m}}\bigg)
\end{equation*}

\noindent For $\displaystyle \psi=\sum_{m\ge 0}\begin{pmatrix}
\psi_m^1\\ \psi_m^2
\end{pmatrix}\otimes Y_m\in H^{1/2}(\partial M)$, one has
\begin{equation*}
\begin{aligned}
\|\big(\Lambda_g(\omega)-\Lambda_{\tilde{g}}(\omega)\big)\psi\|^2_{H^{1/2}(\partial M)}&=\sum_{m\ge 0}(1+\mu_m)^{1/2}\mu_m\bigg(A_0^2|\psi_m^1|^2+A_1^2|\psi_m^2|^2\bigg)\\
&+\sum_{m\ge 0}2(1+\mu_m)^{1/2}\sqrt{\mu_m}\bigg(|A_0(C_0-\tilde{C}_0)||\psi_m^1|^2+|A_1(\tilde{C}_1-C_1)||\psi_m^2|^2\bigg)\hspace{2cm}\\
&+\sum_{m\ge 0}(1+\mu_m)^{1/2}O\big(|\psi_m^1|^2 + |\psi_m^2|^2 \big)
\end{aligned}
\end{equation*}
\noindent Then
\begin{equation*}
\|\Lambda_g(\omega)-\Lambda_{\tilde{g}}(\omega)\|_*<\infty \Leftrightarrow \left\{\begin{aligned}
&A_0=0\\
&A_1=0
\end{aligned}\right.\Leftrightarrow \left\{\begin{aligned}
&\tilde{f}(0)=\tilde{f}(0)\\
&\tilde{f}(1)=\tilde{f}(1).
\end{aligned}\right.
\end{equation*}
\end{proof}

\medskip

\noindent Under the assumptions of Theorem \ref{Calderon_stab_2}, the following estimate holds:

\medskip

\begin{proposition}
	\label{WT_estimates_22}
	Let $\varepsilon>0$. Assume that $\|\Lambda_g(\omega)-\Lambda_{\tilde{g}}(\omega)\|_*\le \varepsilon$. There is $C_A>0$ such that :
	
	\begin{equation*}
	\forall m\in\mathbb{N},\quad\bigg|N(\kappa_m)-\tilde{N}(\kappa_m)\bigg|\le C_A\varepsilon.
	\end{equation*}
\end{proposition}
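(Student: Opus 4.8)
The plan is to exploit the block-diagonal form of $\Lambda_g(\omega)-\Lambda_{\tilde g}(\omega)$ in the basis $\mathscr{B}$, read off the bottom-right entry of each block, and then use Simon's asymptotics (Theorem \ref{Simon_2} and Corollary \ref{CorSimon_2}) to discard the constant terms coming from Lemma \ref{asymp_vp_2}.

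\textbf{Step 1: reduce to a per-block estimate.} Since $f(0)=\tilde f(0)$ and $f(1)=\tilde f(1)$, Lemma \ref{bounded_equiv_22} gives $\Lambda_g(\omega)-\Lambda_{\tilde g}(\omega)\in\mathcal B(H^{1/2}(\partial M))$, so the hypothesis $\|\Lambda_g(\omega)-\Lambda_{\tilde g}(\omega)\|_*\le\varepsilon$ is meaningful. In the basis $\mathscr{B}=(\{e_m^1,e_m^2\})_{m\ge0}$ the operator is block-diagonal with $m$-th block the $2\times2$ matrix $\Lambda_g^m(\omega)-\Lambda_{\tilde g}^m(\omega)$. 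The vectors $e_m^1,e_m^2$ are orthogonal in $H^{1/2}(\partial M)$ and have the same $H^{1/2}$-norm, so on $\mathrm{span}\{e_m^1,e_m^2\}$ the $H^{1/2}$-norm is a fixed scalar multiple of the Euclidean norm of the coefficient vector. Restricting the sup defining $\|\cdot\|_*$ to test functions supported in the $m$-th block therefore yields $\|\Lambda_g^m(\omega)-\Lambda_{\tilde g}^m(\omega)\|_{\ell^2\to\ell^2}\le\varepsilon$ for every $m$.

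\textbf{Step 2: extract the Weyl--Titchmarsh entry.} The Euclidean operator norm of a matrix bounds the modulus of each entry, so the bottom-right entry of $\Lambda_g^m(\omega)-\Lambda_{\tilde g}^m(\omega)$ satisfies, using $f(1)=\tilde f(1)$,
\begin{equation*}
\left|\frac{1}{\sqrt{f(1)}}\bigl(N(\mu_m)-\tilde N(\mu_m)\bigr)+\bigl(C_1-\tilde C_1\bigr)\right|\le\varepsilon,\qquad m\in\mathbb{N},
\end{equation*}
where $C_1,\tilde C_1$ are the boundary constants of Lemma \ref{asymp_vp_2}. (Here $N(\mu_m)$ is finite: the hypothesis that $\omega$ avoids the Dirichlet spectrum of $-\Delta_g$ forces $\Delta(\mu_m)\ne0$, hence $\mu_m$ is not a pole of $N$; likewise for $\tilde N$.) By Theorem \ref{Simon_2} and Corollary \ref{CorSimon_2}, $-N(z^2)=z+O(1/z)$ and $-\tilde N(z^2)=z+O(1/z)$, so $N(\mu_m)-\tilde N(\mu_m)=O(1/\sqrt{\mu_m})\to0$ as $m\to+\infty$. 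Letting $m\to+\infty$ in the displayed inequality gives $|C_1-\tilde C_1|\le\varepsilon$, and the triangle inequality then yields $|N(\mu_m)-\tilde N(\mu_m)|\le 2\sqrt{f(1)}\,\varepsilon$ for all $m$. Since $f\in\mathcal C(A)$ forces $f(1)\le A$, we get $|N(\mu_m)-\tilde N(\mu_m)|\le 2\sqrt A\,\varepsilon$ for every $m$; as each $\kappa_m$ occurs among the $\mu_j$, this establishes the claim with $C_A=2\sqrt A$.

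The argument is short, and there is no real obstacle: the only point requiring attention is the identification in Step 1 of the global operator norm $\|\cdot\|_*$ with (an upper bound on) the per-block Euclidean matrix norms, after which everything reduces to the elementary observation that Simon's leading asymptotics wash out the difference of the boundary constants. Note that, consistently with the absence of a symmetry assumption in Theorem \ref{Calderon_stab_2}, no symmetry of $f$ or $\tilde f$ is used here.
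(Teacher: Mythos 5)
Your proof is correct and follows essentially the same approach as the paper's: block-diagonal reduction, extraction of the bottom-right entry, and Simon's leading asymptotics to discard $C_1-\tilde C_1$. The only cosmetic difference is that you bound the full $2\times 2$ block operator norm and then use the entry-bound inequality, whereas the paper tests directly with $\psi_m=(0,1)^T\otimes Y_m$ and drops the nonnegative off-diagonal contribution from the computed $H^{1/2}$-norm; these are equivalent.
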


\begin{proof} For $m\in\mathbb{N}$, consider $\displaystyle \psi_m=\begin{pmatrix}
	0\\1
	\end{pmatrix}\otimes Y_m \in H^{1/2}(\partial M)$. 
	
	\noindent One has :
	\begin{equation*}
	\begin{aligned}
	\big(\Lambda_g(\omega)-\Lambda_{\tilde{g}}(\omega)\big)\psi_m &= \big(\Lambda_g^m(\omega)-\Lambda_{\tilde{g}}^m(\omega)\big) \begin{pmatrix}
	0\\1
	\end{pmatrix}\otimes Y_m\\ &= \displaystyle 
	\begin{pmatrix}
	0&\frac{1}{\sqrt{f(0)}}\frac{h^{1/4}(1)}{h^{1/4}(0)}\big(\frac{1}{\tilde{\Delta}(\mu_m)}-\frac{1}{\Delta(\mu_m)}\big)\\0&\bigg(\frac{\tilde{N}(\mu_m)}{\sqrt{f(1)}}-\frac{N(\mu_m)}{\sqrt{f(1)}}\bigg)+(\tilde{C}_1-C_1)
	\end{pmatrix}\otimes Y_m
	\end{aligned}
	\end{equation*}
	
	\noindent Then
	\begin{equation*}
	\begin{aligned}
	\|\big(\Lambda_g(\omega)-\Lambda_{\tilde{g}}(\omega)\big)\psi_m\|^2_{H^{1/2}(\partial M)} = (\mu_m+1)^{1/2}\bigg[\bigg(\frac{\tilde{N}(\mu_m)}{\sqrt{f(1)}}-&\frac{N(\mu_m)}{\sqrt{f(1)}}+(\tilde{C}_1-C_1)\bigg)^2\\&+\frac{1}{f(0)}\frac{h^{1/2}(1)}{h^{1/2}(0)}\bigg(\frac{1}{\tilde{\Delta}(\mu_m)}-\frac{1}{\Delta(\mu_m)}\bigg)^2\bigg].
	\end{aligned}
	\end{equation*}
	\noindent so, for all $m\ge 0$:
	
	\begin{equation*}
		\begin{aligned}
		(\mu_m+1)^{1/2}\bigg|\frac{1}{\sqrt{f(1)}}\big(\tilde{N}(\mu_m)-N(\mu_m)\big)+(\tilde{C}_1-C_1)\bigg|^2 &\le \|\big(\Lambda_g(\omega)-\Lambda_{\tilde{g}}(\omega\big)\psi_m\|^2_{H^{1/2}(\partial M)}\\
	&\le \|\Lambda_g(\omega)-\Lambda_{\tilde{g}}(\omega)\|_*^2 \|\psi_m\|^2_{H^{1/2}(\partial M)}\\
	&= \|\Lambda_g(\omega)-\Lambda_{\tilde{g}}(\omega)\|^2_*(\mu_m+1)^{1/2} \\
	&\le \varepsilon^2(\mu_m+1)^{1/2}.
			\end{aligned}	
	\end{equation*}
	\noindent Hence
	\begin{equation}
	\label{estimation_cald_2}
	\bigg|\frac{1}{\sqrt{f(1)}}\big(\tilde{N}(\mu_m)-N(\mu_m)\big)+(\tilde{C}_1-C_1)\bigg|\le \varepsilon.
	\end{equation}
	
	\medskip
	
	\noindent Using the asymptotic $N(\mu_m)=-\mu_m+o(1)$, we deduce from (\ref{estimation_cald_2}) that
	\begin{equation*}
		|\tilde{C}_1-C_1|\le \varepsilon
	\end{equation*}
	
	\medskip
	
	\noindent and then that there is $C_A>0$ such that, for all $m\in\mathbb{N}$ :
	\begin{equation*}
		\bigg|N(\mu_m)-\tilde{N}(\mu_m)\bigg|\le C_A\:\varepsilon.
	\end{equation*}
\end{proof}

\noindent As in Lemma \ref{integral_relation_2}, one gets an integral relation between $N(z)-\tilde{N}(z)$ and $q-\tilde{q}$:

\begin{lemma}
	 The following integral relation holds:
	\begin{equation}
	\big(N(z)-\tilde{N}(z)\big)\Delta(z)\tilde{\Delta}(z)=\int_{0}^{1}\big(q(x)-\tilde{q}(x)\big)s_0(x,z)\tilde{s}_0(x,z)dx
	\end{equation}
\end{lemma}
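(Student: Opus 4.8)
The plan is to mimic exactly the Wronskian argument used in the proof of Lemma~\ref{integral_relation_2}, but now pairing the Dirichlet solution $s_0$ with $\tilde{s}_0$ instead of $c_0$ with $\tilde{s}_0$. First I would introduce the auxiliary function
\[
\vartheta(x)=s_0(x,z)\,\tilde{s}_0'(x,z)-s_0'(x,z)\,\tilde{s}_0(x,z),
\]
i.e.\ the Wronskian of $s_0$ and $\tilde{s}_0$. Differentiating and using that $s_0$ solves $-u''+qu=-zu$ while $\tilde{s}_0$ solves $-u''+\tilde{q}u=-zu$, the cross terms $s_0'\tilde{s}_0'$ cancel, and one is left with
\[
\vartheta'(x)=s_0(x,z)\big(\tilde{q}(x)+z\big)\tilde{s}_0(x,z)-\big(q(x)+z\big)s_0(x,z)\tilde{s}_0(x,z)=\big(\tilde{q}(x)-q(x)\big)s_0(x,z)\tilde{s}_0(x,z),
\]
so after integrating on $[0,1]$,
\[
\vartheta(1)-\vartheta(0)=\int_0^1\big(q(x)-\tilde{q}(x)\big)s_0(x,z)\tilde{s}_0(x,z)\,dx .
\]

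Next I would evaluate the boundary terms. At $x=0$ the Cauchy data $s_0(0,z)=\tilde{s}_0(0,z)=0$, $s_0'(0,z)=\tilde{s}_0'(0,z)=1$ immediately give $\vartheta(0)=0$. At $x=1$ I would substitute the identities from Proposition~\ref{WT_relations_general}, namely $s_0(1,z)=\Delta(z)$, $s_0'(1,z)=-N(z)\Delta(z)$, and the corresponding tilded versions $\tilde{s}_0(1,z)=\tilde{\Delta}(z)$, $\tilde{s}_0'(1,z)=-\tilde{N}(z)\tilde{\Delta}(z)$. Then
\[
\vartheta(1)=\Delta(z)\big(-\tilde{N}(z)\tilde{\Delta}(z)\big)-\big(-N(z)\Delta(z)\big)\tilde{\Delta}(z)=\big(N(z)-\tilde{N}(z)\big)\Delta(z)\tilde{\Delta}(z),
\]
which is precisely the left-hand side of the claimed relation; combining with the integral identity above yields the lemma, valid for every $z\in\mathbb{C}$ outside the poles of $N$ and $\tilde{N}$ (where $\Delta$, resp.\ $\tilde{\Delta}$, vanishes).

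There is essentially no serious obstacle here: the only point requiring a little care is making sure the relevant Weyl--Titchmarsh relations of Proposition~\ref{WT_relations_general} are applied to the correct solution ($s_0$ rather than $c_0$) and that the tilded analogues hold verbatim for the potential $\tilde{q}$ — which is clear since that proposition was stated for a generic potential. The rest is the same one-line Wronskian computation as in Lemma~\ref{integral_relation_2}.
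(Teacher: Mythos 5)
Your argument is essentially identical to the paper's own proof: the same Wronskian $\vartheta(x)=s_0\tilde{s}_0'-s_0'\tilde{s}_0$, the same integration over $[0,1]$, and the same boundary evaluation at $x=0$ and $x=1$ via Proposition \ref{WT_relations_general}. One small caveat: with your own formula $\vartheta'=(\tilde{q}-q)s_0\tilde{s}_0$, integration gives $\vartheta(1)-\vartheta(0)=\int_0^1(\tilde{q}-q)s_0\tilde{s}_0\,dx$ rather than $\int_0^1(q-\tilde{q})s_0\tilde{s}_0\,dx$, so what your computation actually yields is $\big(N(z)-\tilde{N}(z)\big)\Delta(z)\tilde{\Delta}(z)=\int_0^1\big(\tilde{q}(x)-q(x)\big)s_0(x,z)\tilde{s}_0(x,z)\,dx$. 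This sign discrepancy is in fact already present in the paper (its proof invokes $s_0'(1,z)=N(z)\Delta(z)$, whereas Proposition \ref{WT_relations_general} gives $s_0'(1,z)=-N(z)\Delta(z)$), and it is harmless for the subsequent analysis, which only uses the modulus $|N(\kappa_m)-\tilde{N}(\kappa_m)|$; still, you should fix the integration step so that it is consistent with your expression for $\vartheta'$.
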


\begin{proof}
\noindent Let us define $\displaystyle \theta :x\mapsto s_0(x,z)\tilde{s_0}'(x,z)-s_0'(x,z)\tilde{s}_0(x,z)$. Then :
\begin{equation*}
\begin{aligned}
\theta'(x)&=\big(\tilde{q}(x)-q(x)\big)s_0(x,z)\tilde{s}_0(x,z)
\end{aligned}
\end{equation*}

\medskip

\noindent By integrating between $0$ and $1$, one gets:
\begin{equation*}
s_0'(1,z)\tilde{s}_0(1,z)-s_0(1,z)\tilde{s}_0'(1,z)=\int_{0}^{1}\big(q(x)-\tilde{q}(x)\big)s_0(x,z)\tilde{s}_0(x,z)dx
\end{equation*}

\noindent As $s_0'(1,z)=N(z)\Delta(z)$ and $s_0(1,z)=\Delta(z)$, one gets for all $z\in\mathbb{C}\backslash\mathcal{P}$ :
\begin{equation*}
\big(N(z)-\tilde{N}(z)\big)\Delta(z)\tilde{\Delta}(z)=\int_{0}^{1}\big(q(x)-\tilde{q}(x)\big)s_0(x,z)\tilde{s}_0(x,z)dx.
\end{equation*}
\end{proof}

\noindent  Just as in Section $4$, let us extend on $[-1,0]$ $q$ and $\tilde{q}$ into even functions and denote $L(x)=q(x)-\tilde{q}(x)$. We recall that for all $m\in\mathbb{N}$, we have set $y_m=\sqrt{\kappa_m}$.

$\quad$

\noindent We will take advantage of this representation to write in another way the equalities
\begin{equation*}
\begin{aligned}
\big(N(\kappa_m)-\tilde{N}(\kappa_m)\big)\Delta(\kappa_m)\tilde{\Delta}(\kappa_m)&=\int_{0}^{1}(q(x)-\tilde{q}(x)s_0(x,\kappa_m)\tilde{s}_0(x,\kappa_m)dx.\\
\end{aligned}
\end{equation*}

\medskip

\begin{proposition}
	\label{operator_exist_estimate_22}
	There is an operator $D:L^2([0,1]) \to L^2([0,1])$ such that :
	\begin{enumerate}
		\item For all $m\in\mathbb{N}$, \begin{center}
			$\displaystyle 
			\big(N(\kappa_m)-\tilde{N}(\kappa_m)\big)s_0(1,\kappa_m)\tilde{s}_0(1,\kappa_m)=\frac{1}{y_m^2}\int_{0}^{1}\cosh(2\tau y_m)DL(\tau)d\tau-\frac{1}{y_m^2}\int_{0}^{1}L(\tau)d\tau.$
		\end{center}
		\item The function $\tau\mapsto DL(\tau)$ is $C^1$ on $[0,1]$ and $DL$ and $(DL)'$ are uniformly bounded by a constant $C_A$.
	\end{enumerate}
\end{proposition}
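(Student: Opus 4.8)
The plan is to mimic exactly the construction of the operator $B$ carried out in Proposition \ref{operator_exist_estimate_2}, but now starting from the product $s_0(x,\kappa_m)\tilde{s}_0(x,\kappa_m)$ instead of $c_0(x,\kappa_m)\tilde{s}_0(x,\kappa_m)+\tilde{c}_0(x,\kappa_m)s_0(x,\kappa_m)$. First I would set $z=i y_m$ and insert the Marchenko integral representations
\begin{equation*}
s_0(x,\kappa_m)=\frac{\sinh(y_m x)}{y_m}+\int_0^x H(x,t)\frac{\sinh(y_m t)}{y_m}\,dt,\qquad
\tilde{s}_0(x,\kappa_m)=\frac{\sinh(y_m x)}{y_m}+\int_0^x \tilde{H}(x,u)\frac{\sinh(y_m u)}{y_m}\,du,
\end{equation*}
coming from \cite{marchenko2011sturm}, where $H$, $\tilde H$ are the $C^1$ kernels of Proposition \ref{noyau_estim_2}, odd in the second variable and with all first derivatives uniformly bounded by $C_A$. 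Expanding the product gives four terms $\mathrm{I}_0,\mathrm{II}_0,\mathrm{III}_0,\mathrm{IV}_0$ weighted by $L$, each involving a product $\sinh(y_m\cdot)\sinh(y_m\cdot)/y_m^2$.

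Next I would linearize every product of hyperbolic sines via $2\sinh a\sinh b=\cosh(a+b)-\cosh(a-b)$, producing integrands of the form $\cosh(y_m(\text{sum of arguments}))$ and $\cosh(y_m(\text{difference}))$, and then change variables so that the hyperbolic argument becomes $2\tau y_m$ (the substitutions are exactly those in Proposition \ref{operator_exist_estimate_2}: $t+x=2\tau$, $t-x=2\tau$, etc., with the corresponding Jacobians and the folding of integration domains using $\mathbf 1_{[0,x]}$ indicator functions). The leading term $\mathrm{I}_0$ gives $\frac{1}{y_m^2}\int_0^1\frac{\cosh(2\tau y_m)-1}{2}L(\tau)\,d\tau$, which is precisely where the extra $-\frac{1}{y_m^2}\int_0^1 L(\tau)\,d\tau$ term in the statement comes from (the $\cosh$ linearization of $\sinh^2$, unlike the $\sinh$ linearization of $\sinh\cosh$ in the previous proposition, leaves a constant $-1$). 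Collecting the remaining six or so kernel integrals over $\tau$ into a single operator, I would define $DL(\tau)$ by an explicit formula analogous to $QL(\tau)+RL(\tau)$, but built from the kernels $H$, $\tilde H$ (replacing the $P$, $\tilde P$ that appeared before), together with the constant $\tfrac12 L(\tau)$ coming from the leading $\cosh$ term; then $s_0(1,\kappa_m)\tilde s_0(1,\kappa_m)=\Delta(\kappa_m)\tilde\Delta(\kappa_m)$ times $\bigl(N(\kappa_m)-\tilde N(\kappa_m)\bigr)$ equals $\frac{1}{y_m^2}\int_0^1\cosh(2\tau y_m)DL(\tau)\,d\tau-\frac{1}{y_m^2}\int_0^1 L(\tau)\,d\tau$ as claimed.

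For part (2), I would argue exactly as at the end of the proof of Proposition \ref{operator_exist_estimate_2}: since $f,\tilde f\in\mathcal{C}(A)$, Proposition \ref{noyau_estim_2} gives that $H,\tilde H$ and all their partial derivatives are bounded by $C_A$; each of the finitely many terms defining $DL$ is either a value of such a kernel, or an integral of the form $\tau\mapsto\int_a^\tau g(\tau,x)\,dx$ with $g$ of class $C^1$, and the Leibniz rule $G_a'(\tau)=\int_a^\tau\partial_\tau g(\tau,x)\,dx+g(\tau,\tau)$ shows $G_a$ is $C^1$ with derivative bounded by $C_A$. Hence $DL$ and $(DL)'$ are uniformly bounded by a constant depending only on $A$. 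The one point requiring a little care — and the main (mild) obstacle — is bookkeeping: keeping track of all the domains of integration, the indicator functions, and the parity of $H$, $\tilde H$ after the changes of variables, exactly as in the lengthy computation of $\mathrm{IV}_0$, $\mathrm{IV}_0(1)$, $\mathrm{IV}_0(2,1)$, $\mathrm{IV}_0(2,2)$ in the previous proposition, so that all hyperbolic arguments are correctly folded into $\cosh(2\tau y_m)$ over $\tau\in[0,1]$; since the hyperbolic functions here are both sines (rather than one sine and one cosine), one must also double-check that no boundary term survives other than the advertised $-\frac{1}{y_m^2}\int_0^1 L$.
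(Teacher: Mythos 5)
Your proposal follows essentially the same route as the paper: the paper's own proof simply says to repeat the computations of Proposition \ref{operator_exist_estimate_2} using the Marchenko representation of $s_0$ and $\tilde{s}_0$, producing an explicit kernel formula for $D$ (with $H$, $\tilde{H}$ in place of $P$, $\tilde{P}$), and then bounds $DL$ and $(DL)'$ via Proposition \ref{noyau_estim_2} and the Leibniz rule, exactly as you describe. Your identification of the constant term as coming from the linearization $2\sinh^2(y_m x)=\cosh(2y_m x)-1$ is correct; note that the honest computation gives leading term $\tfrac12 L(\tau)$ and constant $-\tfrac{1}{2y_m^2}\int_0^1 L$, so the coefficients in the paper's displayed formula for $D$ differ by a harmless factor of $2$ that is absorbed into $C_A$ and does not affect the argument.
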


\begin{proof}
	Using the same calculations as in Proposition \ref{operator_exist_estimate_2} together with the representation formula for $s_0$
	\begin{equation*}
	s_0(x,\kappa_m)=\frac{\sinh(y_m x)}{y_m}+\int_{0}^{x}H(x,t)\frac{\sinh(y_mt)}{y_m}dt
	\end{equation*}
	one can prove that the operator $D$ is given by
	\begin{equation*}
	\begin{aligned}
	DL(\tau)=L(\tau)+\int_{\tau}^{1}\tilde{H}(x,2\tau-x)L(x)dx &+\int_{\tau}^{1}H(x,2\tau-x)L(x)dx\\
	&+\int_{\tau}^{1}L(x)\int_{2\tau-x}^{2\tau}\tilde{H}(x,2\tau -t)H(x,t){\bf 1}_{[0,x]}(t)dtdx\\
	&+\int_{2\tau}^{1}L(x)\int_{2\tau}^{x}\tilde{H}(x,t)H(x,t-2\tau)\:dtdx\\
	&+\int_{2\tau}^{1}L(x)\int_{2\tau}^{x}\tilde{H}(x,t-2\tau)H(x,t)\:dtdx\\
	\end{aligned}
	\end{equation*}
	
	\noindent and so that $DL$ and its derivative are bounded by some constant $C_A>0$.
\end{proof}

\noindent For all $m\in\mathbb{N}$, one has
\begin{equation*}
\begin{aligned}
(N(\kappa_m)-\tilde{N}(\kappa_m))s_0(1,\kappa_m)\tilde{s}_0(1,\kappa_m)&=\frac{1}{y_m^2}\int_{0}^{1}\cosh(2\tau y_m)DL(\tau)d\tau-\frac{1}{y_m^2}\int_{0}^{1}L(\tau)d\tau\\
&=\frac{1}{2y_m^2}\int_{0}^{1}e^{2\tau y_m}DL(\tau)d\tau+\frac{1}{2y_m^2}\int_{0}^{1}e^{-2\tau y_m}DL(\tau)d\tau\\
&\qquad\qquad\qquad\qquad\qquad\qquad-\frac{1}{y_m^2}\int_{0}^{1}L(\tau)d\tau.
\end{aligned}
\end{equation*}

\noindent Hence, by multiplying both sides by $2y_m^2e^{-2y_m}$, one has :
\begin{equation*}
\begin{aligned}
2y_m^2e^{-2y_m}\big(N(\kappa_m)-\tilde{N}(\kappa_m)\big)s_0(1,\kappa_m)\tilde{s}_0(1,\kappa_m)&=\int_{0}^{1}e^{2 y_m(\tau -1)}DL(\tau)d\tau+\int_{0}^{1}e^{-2y_m(\tau+1)}DL(\tau)d\tau\\&\quad\quad-2e^{-2y_m}\int_{0}^{1}L(\tau)d\tau
\end{aligned}
\end{equation*}

\noindent

\noindent The asymptotic
\begin{equation*}
s_0(1,\kappa_m)\sim \frac{e^{y_m}}{y_m},\qquad m\to+\infty,
\end{equation*}

\noindent ensures that $y_m^2e^{-2y_m}s_0(1,\kappa_m)\tilde{s}_0(1,\kappa_m)$ is bounded uniformly in $m$. Moreover, by hypothesis:
\begin{equation*}
	\bigg|\int_{0}^{1}L(\tau)d\tau\bigg|\le \varepsilon
\end{equation*}
so
\begin{equation*}
	\bigg|\int_{0}^{1}e^{2 y_m(\tau -1)}DL(\tau)d\tau+\int_{0}^{1}e^{-2y_m(\tau+1)}DL(\tau)d\tau\bigg|\le C_A\varepsilon.
\end{equation*}

\medskip

\noindent We write :

$\quad$

\noindent $\bullet$ $\displaystyle \int_{0}^{1}e^{2y_m(\tau-1)}DL(\tau)d\tau = \int_{0}^{1}e^{-2\tau y_m}DL(1-\tau)d\tau$,

\medskip

\noindent $\bullet$ $\displaystyle \int_{0}^{1}e^{-2y_m(t+1)}DL(\tau)d\tau=\int_{1}^{2}e^{-2\tau y_m}DL(\tau-1)d\tau$,

$\quad$

\medskip

\noindent Setting
\begin{equation*}
\displaystyle RL(\tau)=DL(1-\tau){\bf 1}_{[0,1]}(\tau)+DL(\tau-1){\bf 1}_{[1,2]}(\tau)
\end{equation*}
\noindent one has for all $m\in\mathbb{N}$

\begin{equation*}
\bigg|\int_{0}^{+\infty}e^{-2\tau y_m}RL(\tau)d\tau\bigg|\le C_A\varepsilon
\end{equation*}

\medskip

\noindent By the change of variable $\tau = -\ln(t)$, we obtain the moment problem : 
\begin{equation*}
\forall m\in\mathbb{N},\quad\bigg|\int_{0}^{1}t^{2 y_m}RL(-\ln(t))dt\bigg|\le C_A\varepsilon
\end{equation*}

\noindent Using the same technique as in section 4.4.1, we prove the stability estimate
\begin{equation*}
\|DL\|_{L^2([0,1])}\le C_{A}\frac{1}{\ln\big(\frac{1}{\varepsilon}\big)}.
\end{equation*}

\noindent But $D$ can be written as (see section 4.4.2)
\begin{equation*}
	D=I+C
\end{equation*}
\noindent with, for all $n\ge 1$, $\displaystyle \|C^n\|\le \frac{\big(C_A(1-\tau)\big)^{n-1}}{(n-1)!}$. Consequently $B$ is invertible and its inverse is bounded by some constant $C_A>0$. Hence 
\begin{equation*}
\begin{aligned}
\|q-\tilde{q}\|_{2}&\le \|D^{-1}\|\|DL\|_{L^2([0,1])}\\
&\le C_{A}\frac{1}{\ln\big(\frac{1}{\varepsilon}\big)}.
\end{aligned}
\end{equation*}

\noindent At last, if $\omega=0$ and $n\ge 3$, we deduce as previously that
\begin{equation*}
\begin{aligned}
\|f-\tilde{f}\|_{\infty}&\le C_{A}\frac{1}{\ln\big(\frac{1}{\varepsilon}\big)}.
\end{aligned}
\end{equation*}

$\quad$

$\quad$

\noindent {\bf Aknowledgements.} The author would like to deeply thank Thierry Daud\'e and Fran\c{c}ois Nicoleau for their encouragements, helpful discussions and careful reading.



\bibliographystyle{abbrv}
\bibliography{bibliographie}

  \end{document}